\documentclass[12pt,a4paper]{amsart}
\usepackage{graphicx,latexsym,amsfonts,amsmath,amssymb,rotating,txfonts,mathrsfs,enumerate}
\usepackage{epic}
\usepackage{curves}
\usepackage{pdfsync}
\input xy
\xyoption{all}

{\bf}{\rm}
\newtheorem{theorem}{Theorem}[section]
\newtheorem{proposition}[theorem]{Proposition}
\newtheorem{corollary}[theorem]{Corollary}
\newtheorem{lemma}[theorem]{Lemma}
\newtheorem{definition}[theorem]{Definition}
\newtheorem{remark}[theorem]{Remark}

\newtheorem{example}[theorem]{Example}
{\bf}{\it}

\newcommand{\RR}{{\mathbb R }}
\newcommand{\CC}{{\mathbb C }}

\newcommand{\ZZ}{{\mathbb Z }}
\newcommand{\PP}{ {\mathbb P }}

\newcommand{\GG}{{\mathbb G }}


\newcommand{\calx}{\mathcal{X}}
\newcommand{\calo}{\mathcal{O}}

\newcommand{\cals}{\mathcal{S}}
\newcommand{\calv}{\mathcal{V}}

\newcommand{\reg}{\mathrm{reg}}

\newcommand{\deff}{\mathbf{D}}

\newcommand{\bbb}{\mathbf{b}}

\newcommand{\res}{\operatornamewithlimits{Res}}

\newcommand{\calox}[2]{\calo_{X_{#1}}(#2)}
\newcommand{\ires}{\res_{z_1=\infty}\res_{z_{2}=\infty}\dots\res_{z_k=\infty}}
\newcommand{\sires}{\res_{\mathbf{z}=\infty}}
\newcommand{\dbz}{\,d\mathbf{z}}

\newcommand{\coeff}{\mathrm{coeff}}

\newcommand{\grass}{\mathrm{Grass}}

\newcommand{\cotx}{T_X^*}

\newcommand{\vt}{\tilde{V}}
\newcommand{\xt}{\tilde{X}}

\newcommand{\bz}{\mathbf{z}}
\newcommand{\bi}{\mathbf{i}}
\newcommand{\baa}{\mathbf{a}}
\newcommand{\bj}{\mathbf{j}}

\newcommand{\kt}{{K}}

\newcommand{\bs}{\mathbf{s}}

\newcommand{\GL}{\mathrm{GL}}


\def\a{\alpha}

\def\d{\delta}

\def\l{\lambda}

\def\s{\sigma}

\setlength{\textwidth}{6.2in}
\setlength{\textheight}{8in}

\setlength{\evensidemargin}{0.3in} \setlength{\oddsidemargin}{0.3in}

\title{Towards the Green-Griffiths-Lang conjecture via equivariant localisation} 

\author{Gergely B\'erczi}
\address{Mathematical Institute \\ University of Oxford \\ Andrew Wiles Building \\ OX2 6GG Oxford, UK}\email{berczi@maths.ox.ac.uk}
\thanks{This work was partially supported by the Engineering and Physical Sciences 
Research Council [grant numbers   GR/T016170/1,EP/G000174/1].}

\date{}
\begin{document}

\maketitle

\begin{abstract}
Green and Griffiths \cite{gg} and Lang \cite{lang} conjectured that for every complex projective algebraic variety $X$ of general type there exists a proper algebraic subvariety of $X$ containing all nonconstant entire holomorphic curves $f:\CC \to X$. Using equivariant localisation on the Demailly-Semple jet differentials bundle we give an affirmative answer to this conjecture for generic projective hypersurfaces $X \subset \PP^{n+1}$ of degree $\deg(X) \ge n^{9n}$.   
\end{abstract}

\section{Introduction}\label{sec:intro}

A central object in the study of polynomial differential equations on a smooth complex manifold is the bundle $J_kX$ of $k$-jets $(f',f'',\ldots, f^{(k)})$ of germs of holomorphic curves $f:\CC \to X$ over $X$ and the associated Green-Griffiths bundle  $E_{k,m}^{GG}=\calo(J_kX)$ of algebraic differential operators \cite{gg} whose elements are polynomial functions $Q(f',\ldots,f^{(k)})$ of weighted degree $m$.  In \cite{dem} Demailly introduced the subbundle $E_{k,m} \subset E_{k,m}^{GG}$ of jet differentials that are invariant under reparametrization of the source $\CC$. The group $\GG_k$ of $k$-jets of reparametrisation germs $(\CC,0) \to (\CC,0)$ at the origin acts fibrewise on $J_kX$ and $\oplus_{m=1}^\infty E_{k,m}=\calo(J_kX)^{U_k}$ is the graded algebra of invariant jet differentials under the maximal unipotent subgroup $U_k$ of $\GG_k$. This bundle gives a better reflection of the geometry of entire curves, since
it only takes care of the image of such curves and not of the way they are
parametrized. However, it also comes with a technical difficulty, namely, the reparametrisation group $\GG_k$ is non-reductive, and the classical geometric invariant theory of Mumford \cite{git} is not applicable to describe the invariants and the quotient $J_kX/\GG_k$; for details see \cite{bk,dk}. 

In \cite{dem} Demailly describes a  smooth compactification of $J_kX/\GG_k$ as a tower of projectivised bundles on $X$---the Demailly-Semple bundle---endowed with tautological line bundles $\tau_1,\ldots \tau_k$ whose sections are $\GG_k$-invariants. In \cite{dmr} the algebraic Morse inequalities of Trapani and Demailly reduce the existence of global invariant jet differentials on $X$ to the positivity of a certain intersection number on the Demailly-Semple tower. 

This paper introduces a new technique to handle the complexity and difficulties of computations with the cohomology ring of the Demailly-Semple tower in \cite{dmr}. We apply equivariant localisation in stages on the tower and transform the fixed point formula into an iterated residue to express intersection numbers of the tautological bundles on the Demailly-Semple bundle $\pi:X_k \to X$ as coefficients of the Laurent expansion of a rational function. The crucial idea of introducing iterated residues was motivated by the author's earlier work \cite{bsz}.
\begin{theorem}\label{maintechnical} Let $X \subset \PP^{n+1}$ be a smooth projective hypersurface and let $u_i=c_1(\tau_i)$ and $h=\pi^*c_1(\calo_{X}(1))$ denote the first Chern classes of the $i$th tautological line bundle on the Demailly-Semple tower $X_k$ and the tautological line bundle on $X$, respectively. For any homogeneous polynomial $P=P(u_1,\ldots, u_k,h)$ of degree $\deg(P)=\dim X_k=n+k(n-1)$ we have
\[\int_{X_{k}}P=\int_X \sires \frac{\prod_{2\le t_1 \le t_2 \le k} -(z_{t_1}+z_{t_1+1}+ \ldots +z_{t_2})P(z_1,\ldots, z_k,h)\dbz}{\prod_{1 \le s_1 < s_2 \le k} (z_{s_1}-z_{s_1+1}-\ldots -z_{s_2})\prod_{j=1}^k(z_1+\ldots +z_j)^n}\prod_{j=1}^k s\left(\frac{1}{z_1+\ldots +z_j}\right)
\]
where
\[s\left(\frac{1}{z_1+\ldots +z_j}\right)=1+\frac{s_1(X)}{z_1+\ldots +z_j}+\frac{s_2(X)}{(z_1+\ldots +z_j)^2}+\ldots +\frac{s_n(X)}{(z_1+\ldots +z_j)^n}\]
is the total Segre class at $1/(z_1+\ldots +z_j)$ and the iterated residue is equal to the coefficient of $(z_1\ldots z_k)^{-1}$ in the expansion of the rational expression in the domain $z_1\ll \ldots \ll z_k$. 
\end{theorem}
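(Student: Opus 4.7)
The strategy is to apply Atiyah-Bott equivariant localisation stage by stage on the Demailly-Semple tower $X_k \to X_{k-1} \to \cdots \to X_1 \to X$ and to package the resulting fixed-point sum into a single iterated residue, in the spirit of \cite{bsz}. As a preliminary, I would fix a maximal torus $T_k \cong (\CC^*)^k$ of the reparametrisation group $\GG_k$, with coordinates $\lambda_1,\ldots,\lambda_k$ chosen so that $T_k$ acts on the $k$-jet $(f',f'',\ldots,f^{(k)})$ with weights $\lambda_1,\ldots,\lambda_k$. This torus acts fibrewise on $\pi: X_k\to X$, and Demailly's construction exhibits each $X_j\to X_{j-1}$ as a $\PP^{n-1}$-bundle $\PP(V_j)$ whose $T_k$-fixed points can be enumerated explicitly via sequences of weight-eigenlines in the successive bundles $V_j$.

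Next I would carry out the push-forward in stages. For a $\PP^{n-1}$-bundle, the push-forward of a polynomial in $u_j = c_1(\tau_j)$ can be written as a single-variable residue at infinity against the characteristic polynomial of the underlying bundle, which equivariantly reads
\[\pi_{j*}\bigl(f(u_j)\bigr)=\res_{z_j=\infty}\frac{f(z_j)\,dz_j}{\prod_{\rho}(z_j-\rho)},\]
where $\rho$ ranges over the equivariant Chern roots of $V_j$. Using Demailly's short exact sequence relating $V_j$ and $V_{j-1}$ inductively, and after a triangular change of variables of the form $z_s = \lambda_s - \lambda_{s-1}$, the Chern roots of $V_j$ at the distinguished fixed point become the linear forms $z_1+z_2+\cdots+z_j$ for the appropriate $s$. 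This accounts for both the denominator factors $(z_1+\cdots+z_j)^n$ and, after the final push-forward down to $X$, the total Segre-class factor $s(1/(z_1+\cdots+z_j))$, since the vertical tangent directions of the tower at the base involve the Chern classes of $TX$ and are inverted in the localisation formula.

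The main obstacle will be identifying the remaining combinatorial factors $\prod_{2\le t_1\le t_2\le k}-(z_{t_1}+\cdots+z_{t_2})$ in the numerator and $\prod_{1\le s_1<s_2\le k}(z_{s_1}-z_{s_1+1}-\cdots-z_{s_2})$ in the denominator. These arise, respectively, from the cokernel classes produced by the tautological sections in Demailly's exact sequences at each level (which contribute positively to the push-forward) and from the weights of the tangent bundle to the fibers at the distinguished fixed point (which contribute to the Euler class of the normal bundle in the localisation formula). Once all weights and signs at each level have been tabulated, the sum over fixed points collapses to the iterated-residue expression above by the same mechanism as in \cite{bsz}: the spurious fixed-point contributions cancel after Laurent expansion in the regime $z_1\ll\cdots\ll z_k$, leaving only the distinguished fixed point whose contribution is exactly the coefficient of $(z_1\cdots z_k)^{-1}$. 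The hardest and most delicate part will be the meticulous bookkeeping of the equivariant weights across all $k$ levels, and checking that the prescribed ordering $z_1\ll\cdots\ll z_k$ matches the order in which the push-forwards $\pi_{k*},\pi_{k-1,*},\ldots,\pi_{1*}$ are composed.
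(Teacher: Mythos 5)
Your starting point contains a fatal flaw: there is no maximal torus $T_k\cong(\CC^*)^k$ inside the reparametrisation group $\GG_k$. Recall that $\GG_k = U_k\rtimes\CC^*$ is a semidirect product of a $(k-1)$-dimensional \emph{unipotent} group $U_k$ with $\CC^*$, so its maximal torus is the one-dimensional $\CC^*$ acting by $\lambda\cdot(f',f'',\ldots,f^{(k)})=(\lambda f',\lambda^2 f'',\ldots,\lambda^k f^{(k)})$. A rank-$k$ torus simply isn't available there. The paper localises with a different torus altogether: a $T\cong(\CC^*)^n$ acting linearly on the tangent space $T_{X,x}$ at a base point $x$ (i.e.\ coming from a choice of eigenbasis for $T_{X,x}$), which then acts fibrewise on $X_{k,x}$ via Demailly's construction. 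The equivariant parameters are therefore the Chern roots $\lambda_1,\ldots,\lambda_n$ of $T_X$, not weights of a $\GG_k$-action, and there are $n$ of them, not $k$.

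Relatedly, the proposed ``triangular change of variables $z_s=\lambda_s-\lambda_{s-1}$'' cannot be right: the $\lambda$'s and the $z$'s live in different-dimensional spaces ($n$ versus $k$) and play completely different roles. In the paper the $z_j$ are \emph{auxiliary residue variables}, introduced via a single-variable residue identity on each $\PP^{n-1}$-fibre (Proposition~\ref{flagresidue}) that converts the full Atiyah--Bott sum over fixed points of that fibre into a residue at $z_j=\infty$; they are not obtained by any change of variables on the equivariant parameters. Moreover, your closing claim that ``spurious fixed-point contributions cancel ... leaving only the distinguished fixed point'' misrepresents the mechanism: the residue identity packages the contributions of \emph{all} fixed points into one term --- nothing is being cancelled, and there is no distinguished fixed point. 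Once the torus is corrected, the rest of the plan (iterating the $\PP^{n-1}$-residue formula along the tower, reading off the tangent weights at each stage from Demailly's exact sequences to produce the combinatorial numerator and denominator factors, and finally recognising the product $\prod_j\prod_i(\lambda_i - z_1-\cdots-z_j)^{-1}$ as $\prod_j(z_1+\cdots+z_j)^{-n}s(1/(z_1+\cdots+z_j))$ after pushing forward to $X$) is broadly the right shape, but as written the argument would not get off the ground.
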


Note that the iterated residue on the right hand side of this formula is a degree $n$ cohomology class expressed as a polynomial in $h, s_1(X),\ldots, s_n(X)$ so the tautological integral over the Demailly tower is a polynomial purely in topological invariants of $X$. 

The Green-Griffiths-Lang (GGL) conjecture \cite{gg,lang} states that every projective algebraic variety $X$ of general type contains a proper algebraic subvariety $Y\subsetneqq X$ such that every
nonconstant entire holomorphic curve $f:\CC \to X$ satisfies $f(\CC) \subset Y$.  The GGL conjecture is related to the stronger concept of a hyperbolic variety \cite{kob}. A projective variety $X$ is hyperbolic (in the sense of Brody) if there is no nonconstant entire holomorphic curve in $X$, i.e. any holomorphic map $f: \CC \to X$ must be constant.   Hyperbolic algebraic varieties have attracted considerable attention, in part because of their conjectured diophantine properties. For instance, Lang \cite{lang} has conjectured that any hyperbolic complex projective variety over a number field K can contain only finitely many rational points over K.
 
A positive answer to the GGL conjecture has been given for surfaces by McQuillan \cite{mcquillan} under the assumption that the second Segre number $c^2_1-c_2$ is positive. Siu in \cite{siu1,siu2,siu3,siu4} developed a strategy to establish algebraic degeneracy of entire holomorphic curves in generic hypersurfaces $X\subset \PP^{n+1}$ of high degree, and also hyperbolicity of such hypersurfaces for even higher degree. Following this strategy combined with techniques of Demailly \cite{dem} the first effective lower bound for the degree of the hypersurface in the GGL conjecture was given by Diverio, Merker and Rousseau in \cite{dmr}. They proved that for a generic projective hypersurface $X\subset \PP^{n+1}$ of degree $\deg(X)>2^{n^5}$ the GGL conjecture holds. 

Proving algebraic degeneracy of holomorphic curves on $X$ means finding a nonzero polynomial function $P$ on $X$ such that all entire curves $f:\CC \to X$ satisfy $P(f(\CC))=0$. All known methods of proof are based on establishing first the existence of certain algebraic differential equations $P(f,f',\ldots,f^{(k)})=0$ of some order $k$, and then the second step is to find enough such equations so that they cut out a proper algebraic locus $Y\subsetneqq X$. 

Demailly in \cite{dem3} formulated a generalised version of the GGL conjecture for directed manifolds $(X,V)$, where $V \subseteq T_X$ is a subbundle, and proved--using holomorphic Morse inequalities and probabilistic methods--that for any projective directed manifold $(X,V)$ with $K_V$ big there is a differential equation $P$ of order $k\gg 1$ such that any entire curve $f$ must satisfy $P(f,f',\ldots, f^{(k)})=0$. Merker \cite{merker3} proved the same for projective hypersurfaces in $\PP^{n+1}$ of degree at least $n+3$ using algebraic Morse inequalities. Darondeau \cite{darondeau} adapted techniques of the present paper to study algebraic degeneracy of entire curves in complements of smooth projective hypersurfaces. Demailly \cite{dem15} proved the GGL conjecture for directed pairs $(X,V)$ satisfying certain jet stability conditions and announced the proof of the Kobayashi conjecture on the hyperbolicity of very general algebraic hypersurfaces and complete intersections. Siu \cite{siu4} proved the Kobayashi hyperbolicity of projective hypersurfaces of sufficiently high (but not effective) degree.   

This paper focuses on smooth projective hypersurfaces $X\subset \PP^{n+1}$. The main technical reason for this is that in this case the Chern classes of $X$ on the right hand side of Theorem \ref{maintechnical} are expressible with the degree $d$ of $X$ and the first Chern class $h$ of the hyperplane line bundle over $X$ using the identity
\[(1+h)^{n+2}=(1+dh)c(X),\]
where $c(X)=c(T_X)$ is the total Chern class of $X$. Then the iterated residue becomes a polynomial in $h^n$ with polynomial coefficients in $d,n$ and integration simply means the substitution $h^n=d$. 

This paper follows the strategy of \cite{dmr}, but the efficiency of computations with iterated residues allows us to prove the GGL conjecture with a sharper exponential bound on the degree of a the generic hypersurface.  We use the residue formula in Theorem \ref{maintechnical} to prove the existence of global differential equations of order $k=n$ satisfied by entire holomorphic curves on $X$ with $\deg(X)>6n^{8n}$. Combined with deformation arguments of \cite{dmr} (based on earlier works \cite{voisin,siu2,merker2}) to get enough independent differential equations, this gives us the following effective degree bound in the Green-Griffiths-Lang conjecture:
\begin{theorem}\label{mainthmone}
Let $X\subset \PP^{n+1}$ be a generic smooth projective hypersurface
of degree $\deg(X)\ge n^{9n}$. 
Then there exists a proper algebraic subvariety $Y\subsetneqq X$ such
that every nonconstant entire holomorphic curve $f:\CC \to X$ has
image contained in $Y$.
\end{theorem}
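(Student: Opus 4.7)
The plan is to follow the two-step Siu--Demailly strategy as in \cite{dmr}, but replacing the direct cohomological calculations on the Demailly-Semple tower by evaluations of the iterated residue provided by Theorem~\ref{maintechnical}.

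\textbf{Step 1 (Existence of jet differentials via Morse inequalities).} First I would fix $k=n$ and, for a weight vector $\mathbf{a}=(a_1,\ldots,a_k)$ satisfying the Demailly admissibility condition $a_i\ge 3a_{i+1}$, consider the line bundle $L_{\mathbf{a},m}=\calo_{X_k}(\mathbf{a})\otimes\pi^*\calo_X(-m)$ on $X_k$, whose sections correspond to invariant jet differentials of order $k$ and weighted degree $|\mathbf{a}|$ vanishing along an ample divisor $\calo_X(m)$. By the algebraic Morse inequalities of Trapani--Demailly as used in \cite{dmr}, a nonzero global section exists whenever the top self-intersection $\int_{X_k}L_{\mathbf{a},m}^{\dim X_k}$ with $\dim X_k=n+k(n-1)$ is positive.

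\textbf{Step 2 (Turning the intersection number into a polynomial in $d$).} I would apply Theorem~\ref{maintechnical} to $P=L_{\mathbf{a},m}^{\dim X_k}$, rewriting the intersection as an iterated residue in the variables $z_1,\ldots,z_k$, where the Segre classes $s_j(X)$ appear linearly. Using the hypersurface relation $(1+h)^{n+2}=(1+dh)c(X)$, each Segre class becomes an explicit polynomial in $h$ and $d$, and integration over $X\subset\PP^{n+1}$ amounts to the substitution $h^n=d$. As a result, $\int_{X_k}L_{\mathbf{a},m}^{\dim X_k}$ becomes a polynomial $Q_n(\mathbf{a},m,d)$ with integer coefficients in the parameters.

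\textbf{Step 3 (Asymptotic positivity).} Next I would choose the weights $\mathbf{a}$ decaying geometrically, e.g.\ $a_i=C\cdot 3^{k-i}$ with $C\approx m$, and analyse the Laurent expansion in the chamber $z_1\ll\cdots\ll z_k$. Identifying the leading monomial in $d$ requires tracking the combinatorial contribution of $\prod_{t_1\le t_2}(z_{t_1}+\cdots+z_{t_2})$, of the denominator $\prod_{s_1<s_2}(z_{s_1}-z_{s_1+1}-\cdots-z_{s_2})\prod_j(z_1+\cdots+z_j)^n$, and of the Segre-class insertions. Careful bookkeeping should then show that $Q_n(\mathbf{a},m,d)$ is positive once $m$ is chosen proportional to $d/|\mathbf{a}|$ and $\deg(X)=d>6n^{8n}$, producing a global invariant jet differential $P$ of order $n$ vanishing along an ample divisor.

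\textbf{Step 4 (From one equation to algebraic degeneracy).} Finally I would invoke the fundamental vanishing theorem of Siu--Demailly, which forces every nonconstant entire curve $f:\CC\to X$ to satisfy $P(f,f',\ldots,f^{(n)})=0$ for each such jet differential $P$. To ensure that the common zero locus of these equations is a proper subvariety $Y\subsetneqq X$, I would then apply the meromorphic slanted vector field machinery of Voisin--Siu--Merker in the form used by \cite{dmr}: differentiating $P$ along the universal family of degree-$d$ hypersurfaces produces a family of independent jet differentials, and the extra factor of degree absorbed in killing the poles of these vector fields is exactly what raises the bound from $n^{8n}$ in Step~3 to the stated $n^{9n}$.

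\textbf{Main obstacle.} The hard part is Step~3: extracting a sharp lower bound on the iterated residue. The number of monomials in the Laurent expansion grows rapidly with $n$ and $k$, and one must show that the leading term in $d$ is not cancelled by subleading combinatorial contributions. Obtaining an exponential rather than doubly exponential bound in $n$, which is the whole point of the residue approach over the direct cohomological computation of \cite{dmr}, hinges on careful uniform estimates of the residue coefficients; this is where all the technical weight of the paper will lie.
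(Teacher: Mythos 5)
Your overall architecture is the paper's: reduce to the existence of twisted invariant jet differentials of order $k=n$ via the Siu--Demailly vanishing theorem and the slanted vector field machinery imported from \cite{dmr}, prove existence via the Trapani--Demailly algebraic Morse inequalities, and evaluate the resulting intersection number on the Demailly--Semple tower by the iterated residue of Theorem~\ref{maintechnical}. Steps~2 and~4 are essentially correct, including your correct identification that the jump from the Morse bound to $n^{9n}$ is governed by the pole order of Merker's vector fields over $\delta$, which enters through Theorem~\ref{germtoentire}. However, two points in your Steps~1 and~3 contain genuine gaps.

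First, the algebraic Morse inequalities do not reduce the problem to positivity of $\int_{X_k}L_{\mathbf{a},m}^{\dim X_k}$. Because $L_{\mathbf{a},m}$ is not itself nef, one must write it as a difference $L = F\otimes G^{-1}$ of nef line bundles, and the sufficient criterion is $F^{N} - N\,F^{N-1}G > 0$ with $N=\dim X_k = n+k(n-1)$. This is precisely the quantity $I(n,k,\mathbf{a},\delta)$ in \eqref{inkad}, which is not the top self-intersection of $L$. The distinction matters: the cross term $-N\,F^{N-1}G$ contains the destabilising $-n^2\delta|\mathbf{a}|\,dh$ contribution, and a substantial part of the combinatorial estimate is devoted to showing that this term does not overwhelm the leading one.

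Second, and more seriously, the weights $a_i = C\cdot 3^{k-i}$ will not work. The minimal ratio $3$ is only what is needed for $\calox{k}{\mathbf{a}}$ to be relatively nef; it is far too shallow to make the residue estimate close. In the paper's argument the choice $a_i = n^{8(n+1-i)}$ (ratio $n^8$) is essential: for a monomial $\mathbf{z}^{\mathbf{i}}$ with $\Sigma\mathbf{i}=0$ one has $a_1^{i_1}\cdots a_n^{i_n} = n^{8\,\deff(\mathbf{i})}$, and the corresponding coefficients $A_{\mathbf{z}^{-\mathbf{i}-\mathbf{1}}(dh)^n}$ are bounded by quantities of order $n^{3\deff(\mathbf{i})}$ (Lemmas~\ref{decomposition}, \ref{lemmaai}, \ref{lemmaaih}); the gap of $n^{5\deff(\mathbf{i})}$ is what allows the non-dominant terms to be summed to less than the dominant term $B_{\mathbf{0}}=(a_1\cdots a_n)^n\binom{n^2}{n,\ldots,n}$. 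With ratio $3$ the weight factor $3^{\deff(\mathbf{i})}$ is swamped by the combinatorial factor $n^{3\deff(\mathbf{i})}$ for $n\ge 2$, so no dominance argument of this type can go through, and one cannot conclude positivity of $I$. Your ``main obstacle'' paragraph correctly anticipates that Step~3 is where the technical weight lies, but the estimates you would need require the much steeper weight profile and the explicit $\delta=\frac{1}{2n^{8n}}$, which keeps $\delta|\mathbf{a}|<1$ and controls the negative cross term.
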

Even if our lower bound is far from the one $\deg(X)\ge n+3$ insuring general type, to our knowledge this is the best effective bound valid for generic projective hypersurfaces.  

In the forthcoming  paper \cite{b2} we replace the Demailly-Semple bundle with a more sophisticated  compactification of $J_kX/\GG_k$ motivated by the author's earlier work in global singularity theory \cite{bsz} on Thom polynomials of singularity classes. We will prove that the GGL conjecture for hypersurfaces with polynomial degree follows from a conjectural positivity property of Thom polynomials. 

\textsl{Acknowledgments} I would like to thank Damiano Testa and Frances Kirwan for patiently listening to details of this work. The first version of this paper was presented in Strasbourg, Orsay and Luminy in 2010/2011. I would like to thank to Jean-Pierre Demailly, Jo\"el Merker, Simone Diverio, Erwan Rousseau and Lionel Darondeau for their comments and suggestions. The paper has been rewritten based on these discussions to make the technical details of localisation more available to non-experts. 
The author warmly thanks Andr\'as Szenes, his former PhD supervisor, for the collaboration on \cite{bsz}, from which this paper has outgrown.

\section{Jet differentials}\label{sec:jetdiff}

The central object of this paper is the algebra of invariant jet differentials under
reparametrisation of the source space $\CC$. For more details see the survey papers \cite{dem,dr}.

\subsection{Invariant jet differentials}\label{subsec:jetdiff}
Let $X$ be a complex $n$-dimensional manifold and let $k$ be a positive integer. Green and Griffiths
in \cite{gg} introduced the bundle $J_kX \to X$
of $k$-jets of germs of parametrized curves in $X$; its
fibre over $x\in X$ is the set of equivalence classes of germs of holomorphic
maps $f:(\CC,0) \to (X,x)$, with the equivalence relation $f\sim g$
if and only if the derivatives $f^{(j)}(0)=g^{(j)}(0)$ are equal for
$0\le j \le k$. If we choose local holomorphic coordinates
$(z_1,\ldots, z_n)$ on an open neighbourhood $\Omega \subset X$
around $x$, the elements of the fibre $J_kX_x$ are represented by the Taylor expansions 
\[f(t)=x+tf'(0)+\frac{t^2}{2!}f''(0)+\ldots +\frac{t^k}{k!}f^{(k)}(0)+O(t^{k+1}) \]
 up to order $k$ at $t=0$ of $\CC^n$-valued maps $f=(f_1,f_2,\ldots, f_n)$
on open neighbourhoods of 0 in $\CC$. Locally in these coordinates the fibre  can be written as
\[J_kX_x=\left\{(f'(0),\ldots, f^{(k)}(0)/k!)\right\}=(\CC^n)^k,\]
which we identify with $\CC^{nk}$.  Note that $J_kX$ is not
a vector bundle over $X$ since the transition functions are polynomial but
not linear, see \cite{dem} for details.

Let $\GG_k$ denote the group of $k$-jets of local reparametrisations
of $(\CC,0) \to (\CC,0)$
\[t \mapsto \varphi(t)=\a_1t+\a_2t^2+\ldots +\a_kt^k,\ \ \ \a_1\in
\CC^*,\a_2,\ldots,\a_k \in \CC,\] 
under composition modulo terms $t^j$ for $j>k$. This group acts fibrewise on
$J_kX$ by substitution. A short computation shows that this is a
linear action on the fibre:
\begin{multline}\nonumber f \circ
\varphi(t)=f'(0)\cdot(\a_1t+\a_2t^2+\ldots
+\a_kt^k)+\frac{f''(0)}{2!}\cdot (\a_1t+\a_2t^2+\ldots
+\a_kt^k)^2+\ldots \\
\ldots +\frac{f^{(k)}(0)}{k!}\cdot (\a_1t+\a_2t^2+\ldots +\a_kt^k)^k 
\text{ modulo } t^{k+1}
\end{multline}
so the linear action of $\varphi$ on the $k$-jet $(f'(0),f''(0)/2!,\ldots,
f^{(k)}(0)/k!)$ is given by the following matrix multiplication:
\begin{equation*}
(f'(0),f''(0)/2!,\ldots,f^{(k)}(0)/k!) \cdot 
\left(\begin{array}{ccccc}
\a_1 & \a_2 & \a_3 & \cdots  & \a_k \\
0        & \a_1^2 & 2\a_1\a_2 & \cdots &  \a_1\a_{k-1}+\ldots +\a_{k-1}\a_1 \\
0        & 0       & \a_1^3  & \cdots & 3\a_1^2\a_{k-2}+\ldots \\
\cdot    & \cdot   & \cdot    & \cdot &  \cdot \\
0 & 0 & 0 & \cdots  & \a_1^k 
\end{array} \right)
\end{equation*}
where the matrix has general entry 
\[(\GG_{k})_{i,j}=\sum_{\substack{s_1,\ldots s_i \in \ZZ_+ \\
s_1+\ldots +s_i=j}}\a_{s_1}\ldots \a_{s_i} \text{ for } 1\le i,j\le k.\] 
$\GG_k$ sits in an exact sequence of groups
$1 \rightarrow U_k \rightarrow \GG_k \rightarrow \CC^* \rightarrow
1$,
 where $\GG_k \to \CC^*$ is the morphism $\varphi \to
\varphi'(0)=\a_1$ in the notation used above, and
\begin{equation}\label{uuk}
\GG_k=U_k \rtimes \CC^*
\end{equation}
is a $\CC^*$-extension of the unipotent group $U_k$. With the above identification, $\CC^*$ is
the subgroup of diagonal matrices satisfying $\a_2=\ldots =\a_k=0$ and
$U_k$ is the unipotent radical of $\GG_k$, consisting of matrices of the form above with $\a_1=1$. The action
of  $\l \in \CC^*$ on $k$-jets is thus described by
\[\l\cdot (f',f'',\ldots ,f^{(k)})=(\l f',\l^2 f'',\ldots,
\l ^kf^{(k)})\]

Following \cite{dem}, we introduce the Green-Griffiths vector bundle $E_{k,m}^{GG}$ whose fibres are complex-valued polynomials $Q(f',f'',\ldots ,f^{(k)})$ on the fibres of $J_kX$ of weighted degree $m$ with respect to the $\CC^*$ action above, that is, they satisfy
\[Q(\l f',\l^2 f'',\ldots, \l^k f^{(k)})=\l^m Q(f',f'',\ldots,
f^{(k)}).\]
The fibrewise $\GG_k$ action on $J_kX$ induces an action on $E_{k,m}^{GG}$. Demailly in \cite{dem} defined the  bundle of invariant jet differentials of order $k$ and weighted degree $m$ as the subbundle $E_{k,m}^n\subset E_{k,m}^{GG}$ of polynomial differential operators $Q(f,f',\ldots, f^{(k)})$ which are invariant under $U_k$, that is for any $\varphi \in  \GG_k$ 
\[Q((f\circ \varphi)',(f\circ \varphi)'', \ldots, (f\circ
\varphi)^{(k)})=\varphi'(0)^m\cdot Q(f',f'',\ldots, f^{(k)}).\]
We call $E_k^n=\oplus_m E_{k,m}^n=(\oplus_mE_{k,m}^{GG})^{U_k}$ the Demailly-Semple bundle of invariant jet differentials
\subsection{Compactification of $J_kX/\GG_k$}\label{subsec:comp}





We recall Demailly's construction from \cite{dem} of a smooth relative compactification of the geometric quotient $J_k^{\mathrm{reg}}X/\GG_k$, where $J_k^{\mathrm{reg}}X \subset J_kX$ is the bundle of regular $k$-jets, that is, $k$-jets such that $f'(0)\neq 0$.   
This smooth compactification is constructed as an iterated tower of projectivized bundles over $X$. Demailly in \cite{dem} uses the term Semple $k$-jet bundle and in this paper we will call this bundle the Demailly-Semple bundle. 

Let $(X,V)$ be a directed manifold of dimension $\dim(X)=n$ and $V \subseteq T_X$ a subbundle of rank $rk(V)=r$. We associate to $(X,V)$  an other directed manifold $(\tilde{X},\tilde{V})$, where $\tilde{X}=\PP(V)$ is the projectivised bundle and $\tilde{V}$ is the subbundle of $T_{\tilde{X}}$ defined
fibrewise using the natural projection $\pi : \tilde{X} \to X$ as follows:
\[\tilde{V}_{(x_0,[v_0])}=\{\xi \in T_{\tilde{X},(x_0,[v_0])}|\pi_*(\xi) \in \CC \cdot v_0\}.\]
for any $x_0 \in  X$ and $v_0 \in T_{X,{x_0}}\setminus \{0\}$. We also have a lifting operator which assigns to a germ of a holomorphic curve $f:(\CC,0) \to X$ tangent to $V$ the germ of the holomorphic curve
$\tilde{f}:(\CC, 0) \to \tilde{X}$ tangent to $\tilde{V}$ defined as $\tilde{f}(t)=(f(t),[f'(t)])$.

Let $X\subset \PP^{n+1}$ be a projective hypersurface. Following Demailly \cite{dem}, we define inductively the $k$-jet bundle $X_k$ and the associated subbundle $V_k \subset T_{X_k}$ by iterating the above construction for $V=T_X$, that is: 
\[(X_0,V_0)=(X,T_X),\text{      } (X_k,V_k)=(\xt_{k-1},\vt_{k-1}).\] 
Therefore, 
\[\dim X_k= n+k(n-1),\ \ \ \ \ \mathrm{rank}V_k=n-1,\]
and the construction can be described inductively by the following exact sequences:
\begin{equation}\label{sempleexact1}
\xymatrix{
    0 \ar[r] & T_{X_k/X_{k-1}} \ar[r] & V_k \ar[r]^-{(\pi_k)_*} & \calo_{X_k(-1)}  \ar[r] & 0}
\end{equation}
\begin{equation*}
\xymatrix{   0 \ar[r] & \calo_{X_k} \ar[r] & \pi_k^*V_{k-1}\otimes \calo_{\PP(V_k)}(-1) 
 \ar[r] & T_{X_k/X_{k-1}}  \ar[r] & 0}
  \end{equation*}
where $\pi_k : X_k \to X_{k-1}$ is the natural projection and $(\pi_k)_*$ is its differential. 
Iterating these we get projections $\pi_{j,k}=\pi_{j+1} \circ \ldots \circ \pi_{k-1} \circ \pi_k: X_k \to X_j$ for $j<k$. 
With this notation $\pi_{0,k}: X_k \to X=X_0$ is a locally trivial holomorphic fibre bundle over $X$, and the fibres $X_{k,x}=\pi_{0,k}^{-1}(x)$ are $k$-stage towers of $\PP^{n-1}$ bundles. 

\begin{theorem}[\cite{dem}]
Suppose that $n>2$. The quotient $J_k^{\reg}X/\GG_k $ has the structure of
a locally trivial bundle over $X$ and there is a holomorphic embedding 
$J_k^{\reg}X/\GG_k\hookrightarrow X_k$ which identifies $J_k^{\reg}X/\GG_k$ with $X_k^{\reg}$, that is the set of points in $X_k$ of the form
$f[k](0$) for some non singular $k$-jet $f$. In other words $X_k$ is a relative compactification of $J_k^{\reg}X/\GG_k$ over X. Moreover, one has the direct image formula:
\[(\pi_{0,k})_*\mathcal{O}_{X_k}(m)=\mathcal{O}(E_{k,m}T_X^*).\]
\end{theorem}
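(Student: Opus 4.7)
The plan is to prove both statements by induction on $k$. For the base case $k=1$, the bundle $J_1^{\reg}X$ is $T_X$ with the zero section removed, and $\GG_1 = \CC^*$ acts by scalar multiplication, so $J_1^{\reg}X/\GG_1 = \PP(T_X) = X_1$. Since $U_1$ is trivial, every weight-$m$ polynomial on $T_X$ is tautologically invariant, giving $(\pi_{0,1})_*\calo_{X_1}(m) = \Sym^m T_X^* = E_{1,m}T_X^*$ by the standard push-forward for projective bundles.

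For the inductive step, I would construct the lifting map $\Psi_k : J_k^{\reg}X \to X_k$ by sending a regular $k$-jet $f$ to $f[k](0) := (f[k-1](0), [f[k-1]'(0)])$, where $f[k-1]:(\CC,0)\to X_{k-1}$ is the lifted $(k-1)$-jet curve furnished by induction. One must first verify that $f[k-1]'(0) \neq 0$ whenever $f$ is regular, which follows from the exact sequence (\ref{sempleexact1}) and the inductive hypothesis. A chain-rule calculation shows that the lift of $f\circ \varphi$ equals $f[k-1]\circ \varphi$, hence $(f\circ\varphi)[k-1]'(0) = \varphi'(0)\, f[k-1]'(0)$, which has the same projective class; so $\Psi_k$ descends to $J_k^{\reg}X/\GG_k$. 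Injectivity on the quotient and surjectivity onto $X_k^{\reg}$ can be checked in trivialising local coordinates of $X_{k-1}$, and smoothness together with the dimension match $\dim(J_k^{\reg}X/\GG_k) = n + kn - k = n + k(n-1) = \dim X_k$ yields the claimed biholomorphism.

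The direct image formula then follows by applying the projection formula to the tower $\pi_{0,k} = \pi_{0,k-1}\circ \pi_k$. On the projective bundle $\pi_k: \PP(V_{k-1}) \to X_{k-1}$ we have $(\pi_k)_*\calo_{X_k}(m) = \Sym^m V_{k-1}^*$; pulling back along $\Psi_k$ identifies these sections with polynomial functions of the $k$-jet that are weighted-homogeneous of degree $m$ under $\CC^*$ and invariant under the unipotent part of $\GG_k$, which is exactly $E_{k,m}T_X^*$. The main obstacle is verifying full $U_k$-invariance of the pushed-forward sections: the unipotent reparametrisations $t \mapsto t + \alpha_2 t^2 + \cdots + \alpha_k t^k$ act via the nonlinear upper-triangular matrix displayed in Section \ref{subsec:jetdiff}, and one must show that these are precisely the directions collapsed by the successive projectivisations. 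I would handle this by identifying the parameters $\alpha_2,\ldots,\alpha_k$ with the affine fibre coordinates along the subbundles $T_{X_j/X_{j-1}} \subset V_j$ appearing in (\ref{sempleexact1}), so that the Semple tower construction geometrically realises the quotient by $U_k$ stage by stage. This is the geometric content of Demailly's construction and is where the nontrivial work lies.
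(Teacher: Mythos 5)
The paper cites this theorem from Demailly~\cite{dem} and gives no proof of its own, so there is no internal argument for your sketch to match. That said, your outline does capture the skeleton of Demailly's construction: the inductive lifting map $f\mapsto f[k](0)$, the chain-rule identity $(f\circ\varphi)[k-1]=f[k-1]\circ\varphi$ which gives $(f\circ\varphi)[k-1]'(0)=\varphi'(0)\,f[k-1]'(0)$, and the dimension count $n+kn-k=n+k(n-1)=\dim X_k$ are all the right ingredients.

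Two points need correcting. First, you flag the $U_k$-invariance of the pushed-forward sections as the main obstacle, but on the regular locus this is actually formal. Once $\Psi_k$ is $\GG_k$-equivariant, the tautological trivialisation of $\Psi_k^*\calo_{X_k}(-1)$ by $f\mapsto f[k-1]'(0)$ rescales by $\varphi'(0)$ under $f\mapsto f\circ\varphi$, so any section of $\calo_{X_k}(m)$ pulls back to a function $Q$ on $J_k^{\reg}X$ satisfying $Q(f\circ\varphi)=\varphi'(0)^m Q(f)$ automatically; this is exactly the defining condition of $E_{k,m}T_X^*$, with no need to match the $\alpha_j$'s to fibre coordinates. (The coordinate matching you propose is what is genuinely needed to prove the quotient map $J_k^{\reg}X/\GG_k\to X_k^{\reg}$ is bijective, a different point.) Second, and this is the real gap, all of your identifications are over the open dense regular loci $J_k^{\reg}X$ and $X_k^{\reg}$, whereas the direct image formula $(\pi_{0,k})_*\calo_{X_k}(m)=\calo(E_{k,m}T_X^*)$ is an identity of sheaves, comparing sections over all of $X_k$ with polynomials on all of $J_kX$. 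One must show that the pullback of a section of $\calo_{X_k}(m)$ extends holomorphically across $\{f'=0\}\subset J_kX$ (which has codimension $n$, so a Hartogs-type argument applies only when $n\geq 2$), and conversely that the section induced on $X_k^{\reg}$ by an invariant polynomial extends across $X_k\setminus X_k^{\reg}$, which contains divisorial pieces such as $\PP(T_{X_1/X})$. The hypothesis $n>2$ in the statement is never used in your sketch, and these extension steps are exactly where a lower bound on $n$ enters Demailly's proof; omitting them leaves the direct image formula unjustified.
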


\section{Equivariant cohomology and localisation}\label{sec:equiv}

This section is a brief introduction to equivariant cohomology and localisation. For
more details, we refer the reader to \cite{bgv,bsz}. 

Let $\kt\cong U(1)^n$ be the maximal compact subgroup of
$T\cong(\CC^*)^n$, and denote by $\mathfrak{t}$ the Lie algebra of $\kt$.  
Identifying $T$ with the group $\CC^n$, we obtain a canonical basis of the weights of $T$:
$\lambda_1,\ldots ,\lambda_n\in\mathfrak{t}^*$. 

For a manifold $M$ endowed with the action of $\kt$, one can define a
differential $d_\kt$ on the space $S^\bullet \mathfrak{t}^*\otimes
\Omega^\bullet(M)^\kt$ of polynomial functions on $\mathfrak{t}$ with values
in $\kt$-invariant differential forms by the formula:
\[   
[d_\kt\alpha](X) = d(\alpha(X))-\iota(X_M)[\alpha(X)],
\]
where $X\in\mathfrak{t}$, and $\iota(X_M)$ is contraction by the corresponding
vector field on $M$. A homogeneous polynomial of degree $d$ with
values in $r$-forms is placed in degree $2d+r$, and then $d_\kt$ is an
operator of degree 1.  The cohomology of this complex--the so-called equivariant de Rham complex, denoted by $H^\bullet_T(M)$, is called the $T$-equivariant cohomology of $M$. Elements of $H_T^\bullet (M)$ are therefore polynomial functions $\mathfrak{t} \to \Omega^\bullet(M)^K$ and there is an integration (or push-forward map) $\int: H_T^\bullet(M) \to H_T^\bullet(\mathrm{point})=S^\bullet \mathfrak{t}^*$ defined as  
\[(\int_M \alpha)(X)=\int_M \alpha^{[\mathrm{dim}(M)]}(X) \text{ for all } X\in \mathfrak{t}\]
where $\alpha^{[\mathrm{dim}(M)]}$ is the differential-form-top-degree part of $\alpha$. The following theorem is the Atiyah-Bott-Berline-Vergne localisation theorem in the form of \cite{bgv}, Theorem 7.11. 
\begin{theorem}[Atiyah-Bott \cite{ab}, Berline-Vergne \cite{BV}]\label{abbv} Suppose that $M$ is a compact manifold and $T$ is a complex torus acting smoothly on $M$, and the fixed point set $M^T$ of the $T$-action on M is finite. Then for any cohomology class $\a \in H_T^\bullet(M)$
\[\int_M \alpha=\sum_{f\in M^T}\frac{\a^{[0]}(f)}{\mathrm{Euler}^T(T_fM)}.\]
Here $\mathrm{Euler}^T(T_fM)$ is the $T$-equivariant Euler class of the tangent space $T_fM$, and $\alpha^{[0]}$ is the differential-form-degree-0 part of $\alpha$. 
\end{theorem}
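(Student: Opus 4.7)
The plan is to prove the formula by constructing, on the complement of the fixed-point set, an equivariant primitive of $\alpha$ and then isolating the residual contribution from a small neighbourhood of each fixed point. The geometric ingredient is a $K$-invariant Riemannian metric $g$ on $M$, which exists because $K$ is compact.

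First I would pick a generic $X\in\mathfrak{t}$ so that the zero set of the fundamental vector field $X_M$ equals $M^T$ (such $X$ form a Zariski-open set since $M^T$ is finite). Let $\theta$ be the $g$-dual of $X_M$, i.e.\ the $1$-form $\theta(Y)=g(X_M,Y)$. Then $\theta$ is $K$-invariant (as $K$ is abelian, so $X_M$ itself is $K$-invariant), vanishes exactly on $M^T$, and satisfies $\iota(X_M)\theta=\|X_M\|_g^2>0$ on $U:=M\setminus M^T$. A direct evaluation of the Cartan differential gives
\[
d_K\theta(X)=d\theta-\|X_M\|_g^2,
\]
whose degree-zero component is nowhere vanishing on $U$, so $d_K\theta$ is invertible there as a formal mixed-degree form.

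The second step is to exhibit an equivariant primitive of $\alpha$ on $U$. Because $d_K^2=0$ on $K$-invariant forms, $d_K\theta$ is $d_K$-closed, and so is any formal power $(d_K\theta)^{-1}$; one verifies by direct computation that
\[
\gamma:=\frac{\theta}{d_K\theta}=-\theta\sum_{j\ge 0}\frac{(d\theta)^j}{\|X_M\|_g^{2(j+1)}}
\]
(a finite sum, since $(d\theta)^j=0$ for $2j>\dim M$) satisfies $d_K\gamma=1$ on $U$. Consequently $d_K(\gamma\alpha)=\alpha$ on $U$ whenever $d_K\alpha=0$. Excising small $K$-invariant geodesic balls $B_\varepsilon(f)$ around each fixed point and applying Stokes' theorem to the top-form component yields
\[
\int_{M\setminus\bigcup_f B_\varepsilon(f)}\alpha=-\sum_{f\in M^T}\int_{\partial B_\varepsilon(f)}\gamma\alpha,
\]
and the left-hand side converges to $\int_M\alpha$ as $\varepsilon\to 0$ by compactness of $M$.

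The final step is to evaluate each boundary contribution in the limit. The equivariant tubular neighbourhood theorem identifies an equivariant neighbourhood of $f$ with a neighbourhood of the origin in the $T$-representation $T_fM$, which splits into weight lines with weights $\mu_1(f),\ldots,\mu_n(f)\in\mathfrak{t}^*$. In the flat linear model the form $\theta$ becomes radial, and the boundary integrals can be computed explicitly: only the constant part $\alpha^{[0]}(f)$ survives in the $\varepsilon\to 0$ limit, and the remaining purely geometric factor, combined with the overall sign from Stokes, evaluates to $1/\prod_i\mu_i(X)=1/\mathrm{Euler}^T(T_fM)$, producing the claimed formula. The main obstacle is precisely this local evaluation: one must show that the higher-order corrections to the linearised model and the higher differential-form components of $\alpha$ contribute only subleading terms in $\varepsilon$. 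This amounts to a finite-dimensional residue calculation on the $T$-representation $T_fM$, essentially equivalent to the equivariant Thom isomorphism, and with careful bookkeeping it yields $\int_M\alpha=\sum_{f\in M^T}\alpha^{[0]}(f)/\mathrm{Euler}^T(T_fM)$.
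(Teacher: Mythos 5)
The paper does not prove this statement at all --- it is quoted verbatim as Theorem 7.11 of Berline--Getzler--Vergne and used as a black box --- and your outline is precisely the standard Berline--Vergne proof given in that cited reference: invariant metric, the dual one-form $\theta$, the equivariant primitive $\theta/(d_K\theta)$ off the fixed-point set, Stokes, and a local residue computation in the linearised model. Your argument is correct as an outline; the only part left genuinely unverified is the final local evaluation at each fixed point (the step you yourself flag), which is where the factor $1/\prod_i\mu_i(X)=1/\mathrm{Euler}^T(T_fM)$ actually emerges and which in a complete write-up is usually handled via the explicit computation on a single weight plane or the equivariant Thom form.
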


The right hand side in the localisation formula considered in the fraction field of the polynomial ring of $H_T^\bullet (\mathrm{point})=H^\bullet(BT)=S^\bullet \mathfrak{t}^*$ (see more on details in \cite{ab,bgv}). Part of the statement is that the denominators cancel when the sum is simplified. We start with a toy enumerative example to demonstrate how localisation works.

\begin{example}[How many lines intersect 2 given lines and go through a point in $\PP^3$?]\label{example}
We think points, lines and planes in $\PP^3$ as $1,2,3$-dimensional subspaces in $\CC^4$. For $R \in \grass(3,\CC^4), L \in \grass(1,\CC^4)$ define
\[C_2(R)=\{V\in \grass(2,4): V\subset R\},\ C_1(L)=\{V \in \grass(2,4):L \subset V\}\]

Standard Schubert calculus says that $C_1(L)$ (resp $C_2(R)$) represents the cohomology class $c_1(\tau)$ (resp $c_2(\tau)$) where $\tau$ is the tautological rank 2 bundle over $\grass(2,4)$, and the answer can be formulated as  
\[C_1(L_1)\cap C_1(L_2) \cap C_2(R)=\int_{\grass(2,4)}c_1(\tau)^2c_2(\tau).\]
The fixed point data for equivariant localisation is the following.  
\begin{itemize}
\item Let the diagonal torus $T^4 \subset \GL(4)$ act on $\CC^4$ with weights $\mu_1,\mu_2,\mu_3,\mu_4\in \mathfrak{t}^*\subset H_T^*(pt)$.  
\item The induced action on $\grass(2,4)$ has ${4 \choose 2}$ fixed points, namely, the coordinate subspaces indexed by pairs in the set $\{1,2,3,4\}$.
\item The tangent space of $\grass(2,4)$ at the fixed point $(i,j)\in \grass(2,4)^T$  is $(\CC^2)_{i,j}^* \otimes \CC_{s,t}^2$, where $\{s,t\}=\{1,2,3,4\}\setminus \{i,j\}$, and $\CC^2_{i,j}\in \grass(2,4)$ is the subspace spanned by the $i,j$ basis. Therefore, the weights on $T_{(i,j)}\grass$ are $\mu_s-\mu_i,\mu_s-\mu_j$ with $s\neq i,j$. 
\item The weights of $\tau$ are identified with the Chern roots, so $c_i(\tau)$ is represented by the $i$th elementary symmetric polynomial in the weights of $\tau$.  
\end{itemize}
Theorem \ref{abbv} then gives
\begin{equation}\label{ABBV}
\int_{\grass(2,4)}c_1(\tau)^2c_2(\tau)=\sum_{\sigma \in S_4/S_2} \sigma \cdot \frac{(\mu_1+\mu_2)^2\mu_1\mu_2}{(\mu_3-\mu_1)(\mu_4-\mu_1)(\mu_3-\mu_2)(\mu_4-\mu_2)}=1.
\end{equation} 
On the right hand side we sum over all ${4 \choose 2}$ fixed points by taking appropriate permutation of the indices. The sum on the right hand side turns out to be independent of the $\mu_i$'s. 
\end{example}

\section{Equivariant localisation on the Demailly-Semple tower}\label{sec:localisation}


For $x\in X$ a linear $T=(\CC^*)^n$ action on the tangent space $\calv_0=T_{X,x}$ at $x$ induces a linear action on the fibre $\calx_k=X_{k,x}$ of the Demailly bundle over $x$ and on the bundle $\calv_k=V_{k}|_{X_{k,x}}$. This gives a local fibrewise $T$ action on the Demailly-Semple bundle $X_k$, and  we aim to apply the localisation formula Proposition \ref{abbv} on the fibres. The fibre $\calx_k$ is a $k$-stage tower of projective bundles, and to  understand the fixed point data and the weights of the action at the fixed points we use the exact sequences \eqref{sempleexact1}, restricted to the fibre over $x$. Note that \eqref{sempleexact1} restricted to $\calx_k$ is $T$-equivariant.  

For $k=1$ we have $\calx_1=\PP(T_{X,x})$ and we get the Euler sequence
\begin{equation}\label{sempleexact1a}
\xymatrix{
    0 \ar[r] & T_{\calx_1} \ar[r] & \calv_1 \ar[r] & \calo_{\calx_1(-1)}  \ar[r] & 0,}
\end{equation}  
Let $\left\{e_1,\ldots, e_n\right\}$ be an eigenbasis for the $T$-action on $\calv_0=T_{X,x}$ with weights $\l_1,\ldots,$$\l_n$. 
As  \eqref{sempleexact1a} is $T$-equivariant, the weights on $\calv_1|_{[e_j]}$ at the fixed point $[e_j]=[0:\dots 0:1:0:\dots :0]\in \calx_1$  are $\l_j$ and $\l_i-\l_j$ for $i \neq j$. 

Now \eqref{sempleexact1} restricted to the fibre $\calx_k$ gives us:
\begin{displaymath}\label{sempleexact1b}
\xymatrix{
    0 \ar[r] & T_{\calx_k/\calx_{k-1}} \ar[r] & \calv_k \ar[r] & \calo_{\calx_k(-1)}  \ar[r] & 0,}.
\end{displaymath}  
 Locally $\calv_{k}$ is the direct sum of the two bundles on the ends. Fix a point $y \in \calx_k$, and let $\calv_{k-1,\pi_*y}$ denote the fibre of $\calv_{k-1}$ at the point $\pi_*y \in \calx_{k-1}$, where $\pi=\pi_{k,k-1}$. If $y$ is a fixed point of the $T$-action on $\calx_{k}$, then $\pi_*y$ is a fixed point on $\calx_{k-1}$, and therefore $\calv_{k-1,\pi_*y}$ is $T$-invariant, acted on by $T$ with weights $w_1,\ldots ,w_n \in \mathrm{Lin}(\l_1,\ldots, \l_n)$ in the eigenbasis $e_1,\ldots ,e_n$. By definition $\calx_{k}=\PP(\calv_{k-1})$; let $y$ be the fixed line corresponding to the weight $w_j$. The weights on $T_{\calx_{k}/\calx_{k-1},y}=T_{\PP(\calv_{k-1,\pi_*y})}$ at $y$ are $w_i-w_j \text{ for } i\neq j$, and the weight on the tautologiacal bundle $\calo_{\calx_{k}(-1)}$ at $y \in \calx_{k}$ is $w_j$, so the weights on $\calv_{k,y}$ are  
\begin{equation*}
w_i-w_j \text{ for } i=1,\ldots n, i\neq j, \text{ and } w_j.
\end{equation*}
Therefore, a fixed point $y=F_{w_1,\ldots ,w_k}$ is characterised by a sequence $(w_1,\ldots, w_k)$ of weights  $w_i\in \mathrm{Lin}(\l_1,\ldots ,\l_n), i=1, \ldots, k$ where 
\begin{enumerate}
\item
$w_1 \in \cals=\left\{\l_1,\ldots ,\l_n\right\}$ 
\item For $i\ge 2$ $w_i\in \cals(w_1,\ldots ,w_{i-1})=\left\{w_{i-1},w-w_{i-1}:w \in \cals(w_1,\ldots, w_{i-2})\right\}^{\neq 0}$
\end{enumerate}
and $A^{\neq 0}=A\setminus \{0\}$ denotes the set of nonzero elements of $A$. 

Here $\cals(w_1,\ldots, w_{i-1})$ collects the weights of the $T$ action on the fibre $\calv_{i-1,F_{w_1,\ldots, w_{i-1}}}$. For $n=k=3$ we collected the fixed point data in Table \ref{table}.
\begin{table}
\caption{Weights on the Demailly-Semple bundle for $n=k=3$.}
\label{table}
\small
\centering
\begin{tabular}{|c|c|c|}
\hline
$\calv_0$ & $\calv_1$ & $\calv_2$ \\
\hline
 & & $\cals(\l_1,\l_1)=\{\l_1,\l_2-2\l_1,\l_3-2\l_1\}$ \\
\cline{3-3}
 & $\cals(\l_1)= \{\l_1,\l_2-\l_1,\l_3-\l_1\}$ & $\cals(\l_1,\l_2-\l_1)=\{2\l_1-\l_2,\l_2-\l_1,\l_3-\l_2\}$ \\
\cline{3-3}
 & & $\cals(\l_1,\l_3-\l_1)=\{2\l_1-\l_3,\l_2-\l_3,\l_3-\l_1\}$ \\
\cline{2-3}
 & & $\cals(\l_2,\l_1-\l_2)=\{\l_1-\l_2,2\l_2-\l_1,\l_3-\l_1\}$ \\
\cline{3-3}
$\{\l_1,\l_2,\l_3\}$ & $\cals(\l_2)=\{\l_1-\l_2,\l_2,\l_3-\l_2\}$ & $\cals(\l_2,\l_2)=\{\l_1-2\l_2,\l_2,\l_3-2\l_2\}$ \\
\cline{3-3}
 & & $\cals(\l_2,\l_3-\l_2)=\{\l_1-\l_3,2\l_2-\l_3,\l_3-\l_2\}$ \\
\cline{2-3}
 & & $\cals(\l_3,\l_1-\l_3)=\{\l_1-\l_3,\l_2-\l_1,2\l_3-\l_1\}$ \\
\cline{3-3}
 & $\cals(\l_3)=\{\l_1-\l_3,\l_2-\l_3,\l_3\}$ & $\cals(\l_3,\l_2-\l_3)=\{\l_1-\l_2,\l_2-\l_3,2\l_3-\l_2\}$ \\
\cline{3-3}
 & & $\cals(\l_3,\l_3)=\{\l_1-2\l_3,\l_2-2\l_3,\l_3\}$ \\
\hline
\end{tabular}\\
\end{table}
In general, we get by induction the following
\begin{lemma}\label{si}
Let $1\le i \le k$ and $w_j \in \cals(w_1,\ldots, w_{j-1})$ for $1\le j \le i$. Then 
\begin{multline}\nonumber
\mathcal{S}(w_1,\ldots, w_{i})=\{\l_j-w_1-\ldots -w_{i},w_1-w_2-\ldots -w_{i}, \ldots ,w_{i-1}-w_{i},w_{i}:1\le j \le n\}^{\neq 0} \setminus \\ \setminus \left\{-(w_{t}+w_{t+1}+\ldots +w_{i}):2\le t \le i\right\},
\end{multline}
where for $i=1$ we define the subtracted set to be the empty set.  
\end{lemma}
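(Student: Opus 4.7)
The plan is to prove the lemma by induction on $i$, building the weight set stage by stage from the exact sequence \eqref{sempleexact1} restricted to the fibre. The entire content is already encoded in the recursion extracted right before the lemma statement: at the fixed point $F_{w_1, \ldots, w_i}$ in $\calx_i = \PP(\calv_{i-1})$, the weights on $\calv_i$ consist of the tautological weight $w_i$ together with $w - w_i$ for every $w \in \cals(w_1, \ldots, w_{i-1})$ with $w \neq w_i$. Equivalently,
$$\cals(w_1, \ldots, w_i) = \bigl((\cals(w_1, \ldots, w_{i-1}) - w_i) \cup \{w_i\}\bigr) \setminus \{0\}.$$

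The base case $i = 1$ is immediate: the Euler sequence on $\calx_1 = \PP(T_{X,x})$ at $[e_{j_0}]$, where $w_1 = \l_{j_0}$, produces the weights $\l_j - \l_{j_0}$ for $j \neq j_0$ on the relative tangent bundle and the weight $\l_{j_0}$ on $\calo_{\calx_1}(-1)$, which is exactly $\{\l_j - w_1 : 1 \le j \le n\}^{\neq 0} \cup \{w_1\}$, matching the lemma with the empty subtracted set. For the inductive step, I package the inductive hypothesis as $\cals(w_1, \ldots, w_{i-1}) = E_{i-1} \setminus R_{i-1}$, where $E_{i-1}$ is the union of the two families in the statement at level $i-1$ and $R_{i-1} = \{0\} \cup \{-(w_t + \cdots + w_{i-1}) : 2 \le t \le i-1\}$. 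Applying the recursion, subtracting $w_i$ shifts $\l_j - w_1 - \cdots - w_{i-1} \mapsto \l_j - w_1 - \cdots - w_i$ and $w_t - w_{t+1} - \cdots - w_{i-1} \mapsto w_t - w_{t+1} - \cdots - w_i$, while the newly adjoined $\{w_i\}$ fills in the missing $t = i$ term of the second family; hence the ambient set becomes $E_i$. Simultaneously the shift sends $R_{i-1}$ to $\{-w_i\} \cup \{-(w_t + \cdots + w_i) : 2 \le t \le i-1\}$, and the genuinely new zero produced by $w_i - w_i$ must also be removed, giving the total discarded set $R_i = \{0\} \cup \{-(w_t + \cdots + w_i) : 2 \le t \le i\}$ claimed by the lemma.

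The one delicate point is that $w_i$, adjoined at the last step, is genuinely new: it must coincide neither with $0$ nor with any shifted element of $R_{i-1}$, since otherwise the set identity $\cals(w_1, \ldots, w_i) = E_i^{\neq 0} \setminus R_i$ would lose an element or double-count. This is automatic in the generic setting, as $\l_1, \ldots, \l_n$ are algebraically independent generators of $H_T^*(\mathrm{pt}) = S^\bullet \mathfrak{t}^*$ and each $w_s$ is a nonzero integer combination of them; every element of $R_{i-1} - w_i$ is therefore a specific $\ZZ$-linear combination of the $\l$'s distinct from $w_i$. Modulo this routine genericity check, the two bookkeeping identities $E_i = (E_{i-1} - w_i) \cup \{w_i\}$ and $R_i = (R_{i-1} - w_i) \cup \{0\}$ close the induction; these are the only places where the argument needs care, and the burden is mild.
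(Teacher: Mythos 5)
Your proof is correct and follows essentially the same route as the paper: you set up the induction on $i$, handle the base case $i=1$ via the Euler sequence, and at each step plug the inductive hypothesis into the recursion $\cals(w_1,\ldots,w_i)=\{w_i,\,w-w_i:w\in\cals(w_1,\ldots,w_{i-1})\}^{\neq 0}$. The paper's own proof is exactly this one-step substitution; your packaging of the two bookkeeping identities $E_i=(E_{i-1}-w_i)\cup\{w_i\}$ and $R_i=(R_{i-1}-w_i)\cup\{0\}$ just makes the substitution explicit. The one thing you add beyond the paper is the observation that the newly adjoined $w_i$ must not collide with any element of $R_{i-1}-w_i$ or with $0$; the paper does not comment on this, so you are not missing anything the paper supplies, but note that calling this check ``automatic by algebraic independence of the $\l_j$'' is not itself a proof---one still has to rule out the specific $\ZZ$-linear coincidences $2w_i+w_t+\cdots+w_{i-1}=0$ by tracing what linear forms the $w_j$ can be. The paper addresses the analogous concern about the single vanishing element of $E_i$ only in the following Remark~\ref{important}, and likewise leaves the coincidence question implicit; your version is, if anything, slightly more candid about what is being assumed.
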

\begin{proof}
For $i=1$ and $w_1=\lambda_r$ for some $1\le r \le n$, the weights are
\[\cals(\l_r)=\{\l_j-\l_r,\l_r:j\neq r\}=\{\lambda_j-w_1,w_1:1\le j \le n\}^{\neq 0}\]
as stated. Assume the Lemma holds for $i-1$, and use (2) above at the description of the weights:
\begin{multline}\nonumber
\cals(w_1,\ldots, w_i)=\left\{w_{i},w-w_{i}:w \in \cals(w_1,\ldots, w_{i-1})\right\}^{\neq 0}=\\ 
= \{\l_j-w_1-\ldots -w_{i},w_1-w_2-\ldots -w_{i}, \ldots ,w_{i-1}-w_{i},w_{i}:1\le j \le n\}^{\neq 0} \setminus \\ \setminus \left\{-(w_{t}+w_{t+1}+\ldots +w_{i}):2\le t \le i\right\} 
\end{multline}
as stated.
\end{proof} 
\begin{remark}\label{important}
Note that there is exactly one element of the set 
\[\{\l_j-w_1-\ldots -w_{i},w_1-w_2-\ldots -w_{i}, \ldots ,w_{i-1}-w_{i},w_{i}:1\le j \le n\}\]
which is equal to zero for every choice of $w_1,\ldots, w_i$. We exclude this element by taking the nonzero part of this in set in Lemma \ref{si}. 
\end{remark}

The fixed point set on the fibre $\calx_{k}$ is then $\mathfrak{F}_k=\{F_{w_1,\ldots, w_k}:w_i\in \cals(w_1,\ldots, w_{i-1}\}$. Proposition \ref{abbv} applied to the fibre of the Demailly-Semple bundle gives
\begin{proposition}\label{atiyahbott} Let $\calx_{k}$ be the fibre of the Demailly-Semple $k$-jet bundle over $X$ at $x \in X$ endowed with the induced $T=(\CC^*)^n$ action from $T_{X,x}$. Then for any $\alpha \in H_T^\bullet(\calx_{k})$  
\begin{equation*}
\int_{\calx_{k}}\a = \sum_{F_{w_1,\ldots ,w_k}\in \mathfrak{F}_k} \frac{\a^{[0]}(F_{w_1,\ldots ,w_k})}{\prod_{j=1}^k \prod_{\substack{w \in \cals(w_1,\ldots ,w_{j-1}) \\ w \neq w_j}}(w-w_j)},
\end{equation*}
where $\mathfrak{F}_k=\{F_{w_1,\ldots, w_k}:w_i\in S_i(w_1,\ldots, w_{i-1})\}$ denotes the set of fixed points on the fibre $\calx_{k}$.
\end{proposition}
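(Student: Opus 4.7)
The plan is to apply the Atiyah--Bott--Berline--Vergne formula (Theorem \ref{abbv}) directly to $\calx_k$, which is a compact manifold since it is an iterated tower of $\PP^{n-1}$-bundles. Two things must be verified: first, that $\mathfrak{F}_k$ as indexed in the statement is precisely the $T$-fixed locus of $\calx_k$; second, that the $T$-equivariant Euler class of the tangent space at the fixed point $F_{w_1,\ldots,w_k}$ equals
\[
\mathrm{Euler}^T\bigl(T_{F_{w_1,\ldots,w_k}}\calx_k\bigr)=\prod_{j=1}^{k}\prod_{\substack{w\in\cals(w_1,\ldots,w_{j-1})\\ w\neq w_j}}(w-w_j).
\]

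For the first point I argue by induction on $k$, exactly reproducing the discussion preceding Lemma \ref{si}. Since $\calx_k=\PP(\calv_{k-1})$ is a projective bundle over $\calx_{k-1}$ and the $T$-action is fibrewise, every $T$-fixed point of $\calx_k$ projects to a $T$-fixed point $F_{w_1,\ldots,w_{k-1}}\in\mathfrak{F}_{k-1}$ of $\calx_{k-1}$ and then corresponds to a $T$-stable line inside the fibre $\calv_{k-1,F_{w_1,\ldots,w_{k-1}}}$. The weights occurring in this fibre are precisely $\cals(w_1,\ldots,w_{k-1})$ by the inductive weight calculation; picking an eigenline means choosing one of these weights $w_k$, which is exactly the data parameterising $\mathfrak{F}_k$.

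For the Euler class I combine the splitting $T_{\calx_j}=T_{\calx_j/\calx_{j-1}}\oplus\pi_j^*T_{\calx_{j-1}}$ of tangent bundles (valid as $T$-equivariant bundles restricted to fixed points, hence valid for Euler classes by multiplicativity) with the standard weight decomposition of a projective-space tangent space: at the fixed line $[w_j]\in\PP(\calv_{j-1,F_{w_1,\ldots,w_{j-1}}})$ the tangent space carries the weights $w-w_j$ for $w\in\cals(w_1,\ldots,w_{j-1})\setminus\{w_j\}$. Iterating from $j=1$ to $j=k$ gives the announced product; feeding the resulting Euler classes into Theorem \ref{abbv} yields the formula of the Proposition.

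I do not foresee any serious obstacle; the only care needed is bookkeeping. Specifically, one must check that the weight $w_j$ really does occur among the weights of $\calv_{j-1,F_{w_1,\ldots,w_{j-1}}}$, so that $[w_j]$ defines a bona fide eigenline with one-dimensional weight space (hence a single fixed point in that eigen-direction), and that the remaining weights on the fibrewise tangent space are those listed in Lemma \ref{si}. Both statements are immediate from the inductive rules (1)--(2) above Lemma \ref{si} and from Remark \ref{important}, which guarantees that the unique zero entry in the raw list is accounted for by the non-zero restriction, so no spurious factor $0$ appears in the denominator.
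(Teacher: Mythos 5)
Your proposal is correct and follows essentially the same route as the paper: apply Theorem \ref{abbv} directly, identify the fixed points inductively as eigen-lines of the fibres of $\calv_{j-1}$ (which is exactly the discussion preceding Lemma \ref{si}), and compute $\mathrm{Euler}^T$ by splitting the tangent space along the tower $\calx_k\to\cdots\to\calx_0$ and using the weight decomposition of the projective-space tangent spaces, with Remark \ref{important} disposing of the potential zero factor. The only difference is that the paper delegates the fixed-point identification to the surrounding text rather than the proof itself, so your write-up is slightly more self-contained but mathematically identical.
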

\begin{proof}
The equivariant Euler class of the tangent bundle of $\calx_{k}$ at the fixed point $F_{w_1,\ldots, w_k}$ is the product of the weights in the tangent directions:
\[\mathrm{Euler}_T(T_{F_{w_1,\ldots ,w_k}}\calx_{k})=\prod_{j=1}^k \mathrm{Euler}_T(T_{F_{w_1,\ldots ,w_j}}\PP(\calv_{j-1,F_{w_1,\ldots ,w_{j-1}}})\]
and the weights on $\calv_{j-1,F_{w_1,\ldots ,w_{j-1}}}$ are collected in $\cals(w_1,\ldots, w_{j-1})$.  
 \end{proof} 
 
 In particular, we have the following 
 \begin{corollary}\label{loconsemple}
 Let $u_i=c_1(\pi_{i,k}^*\calo_{\calx_i}(1))$, $i=1,\ldots, k$ denote the first Chern classes of the canonical line bundles on $\calx_{k}$, and let $\alpha(u_1,\ldots, u_k)$ be a homogeneous degree $\mathrm{dim} \calx_{k}=k(n-1)$ polynomial. Then $\alpha^{[0]}(F_{w_1,\ldots, w_k})=\alpha(w_1,\ldots, w_k)$ and therefore 
\begin{equation}\nonumber
\int_{\calx_{k}}\a(u_1,\ldots, u_k)=\sum_{F_{w_1,\ldots ,w_k}\in \mathfrak{F}_k} \frac{\alpha(w_1,\ldots, w_k)}{\prod_{j=1}^k \prod_{\substack{w \in \cals(w_1,\ldots ,w_{j-1}) \\ w \neq w_j}}(w-w_j)}.
\end{equation}
 \end{corollary}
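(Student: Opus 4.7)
The plan is to derive the corollary as a direct specialisation of Proposition \ref{atiyahbott}, so essentially the only real work is to identify the degree-zero part $\alpha^{[0]}(F_{w_1,\ldots,w_k})$ of the equivariant class $\alpha(u_1,\ldots,u_k)$ at each fixed point in $\mathfrak{F}_k$.

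First I would unwind the weight calculation for the tautological line bundles already carried out in the paragraph preceding Lemma \ref{si}. By the inductive construction of the Demailly-Semple tower, the fibre of $\calo_{\calx_j}(-1)$ over the fixed point $F_{w_1,\ldots,w_j}\in\calx_j$ is precisely the $T$-eigenline in $\calv_{j-1,F_{w_1,\ldots,w_{j-1}}}$ of weight $w_j$. Under the sign convention fixed in the previous section, this means that the equivariant first Chern class of $\pi_{j,k}^*\calo_{\calx_j}(1)$ restricts to $w_j$ at any fixed point $F_{w_1,\ldots,w_k}\in\calx_k$ whose image under $\pi_{j,k}$ is $F_{w_1,\ldots,w_j}$. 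Since $\alpha(u_1,\ldots,u_k)$ is a homogeneous polynomial of degree $\dim\calx_k = k(n-1)$, the corresponding equivariant cohomology class lies in top equivariant degree and its differential-form-degree-zero part at a fixed point is obtained by substituting the restriction $u_j \mapsto w_j$ into the polynomial. This gives
\[\alpha^{[0]}(F_{w_1,\ldots,w_k}) = \alpha(w_1,\ldots,w_k).\]
Plugging this identification into the localisation formula from Proposition \ref{atiyahbott} directly produces the displayed formula.

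I do not expect any serious obstacle: the enumeration of $\mathfrak{F}_k$ and the tangent weights on $\calv_k$, which together determine the equivariant Euler class of $T\calx_k$ at each fixed point, have already been established. The corollary simply repackages the localisation formula of Proposition \ref{atiyahbott} for the special case of polynomial classes in the tautological first Chern classes $u_1,\ldots,u_k$, which is exactly the form needed to feed into the iterated-residue computation of Theorem \ref{maintechnical}. The only bookkeeping point worth double-checking is the sign convention for the weight of the tautological line bundle, but this was fixed consistently in the text preceding Lemma \ref{si}, so the substitution $u_j \mapsto w_j$ is the correct one.
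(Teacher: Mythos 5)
Your proposal is correct and matches the paper's intended reading: Corollary \ref{loconsemple} is stated without an explicit proof precisely because it is the specialisation of Proposition \ref{atiyahbott} obtained by identifying the restriction of $u_j$ at the fixed point $F_{w_1,\ldots,w_k}$ with $w_j$, which is exactly what you spell out using the weight computation preceding Lemma \ref{si}. The sign point you flag (the weight computed in the text is that of $\calo_{\calx_j}(-1)$, while $u_j$ is defined via $\calo_{\calx_j}(1)$) is indeed the one detail worth pinning down; the paper silently adopts the convention making $u_j\mapsto w_j$, consistent with the bookkeeping that feeds into Proposition \ref{semple} and Theorem \ref{maintechnical}.
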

  


\subsection{Proof of Theorem \ref{maintechnical}: Transforming the localisation formula into iterated residue}\label{subsec:transform}

In this section we prove Theorem \ref{maintechnical} by transforming the right hand side of the formula in Corollary \ref{loconsemple} into an iterated residue motivated by \cite{bsz}. This will enable us to make effective calculations with the cohomology ring of the Demailly-Semple bundle and to prove positivity of the intersection numbers coming up in the Morse inequalities. 

To describe this formula, we will need the notion of an {\em iterated
  residue} (cf. e.g. \cite{szenes}) at infinity.  Let
$\omega_1,\dots,\omega_N$ be affine linear forms on $\CC^k$; denoting
the coordinates by $z_1,\ldots, z_k$, this means that we can write
$\omega_i=a_i^0+a_i^1z_1+\ldots + a_i^kz_k$. We will use the shorthand
$h(\bz)$ for a function $h(z_1\ldots z_k)$, and $\dbz$ for the
holomorphic $n$-form $dz_1\wedge\dots\wedge dz_k$. Now, let $h(\bz)$
be an entire function, and define the {\em iterated residue at infinity}
as follows:
\begin{equation}
  \label{defresinf}
 \ires \frac{h(\bz)\,\dbz}{\prod_{i=1}^N\omega_i}
  \overset{\mathrm{def}}=\left(\frac1{2\pi i}\right)^k
\int_{|z_1|=R_1}\ldots
\int_{|z_k|=R_k}\frac{h(\bz)\,\dbz}{\prod_{i=1}^N\omega_i},
 \end{equation}
 where $1\ll R_1 \ll \ldots \ll R_k$. The torus $\{|z_m|=R_m;\;m=1 \ldots
 k\}$ is oriented in such a way that $\res_{z_1=\infty}\ldots
 \res_{z_k=\infty}\dbz/(z_1\cdots z_k)=(-1)^k$.
We will also use the following simplified notation: $\sires \overset{\mathrm{def}}=\ires.$

In practice, one way to compute the iterated residue \eqref{defresinf} is the following algorithm: for each $i$, use the expansion
 \begin{equation}
   \label{omegaexp}
 \frac1{\omega_i}=\sum_{j=0}^\infty(-1)^j\frac{(a^{0}_i+a^1_iz_1+\ldots
   +a_{i}^{q(i)-1}z_{q(i)-1})^j}{(a_i^{q(i)}z_{q(i)})^{j+1}},
   \end{equation}
   where $q(i)$ is the largest value of $m$ for which $a_i^m\neq0$,
   then multiply the product of these expressions with $(-1)^kh(z_1\ldots
   z_k)$, and then take the coefficient of $z_1^{-1} \ldots z_k^{-1}$
   in the resulting Laurent series.

The second option to compute iterated residues is working step-by-step. First take the residue with respect to $z_k$ by applying the Residue Theorem on $\CC \cup \{0\}$; the residue at $z_k=\infty$ is minus the sum of the residues at the finite poles $w_j=-1/{a_j^k}(a_j^0+\ldots +a_j^{k-1}z_{k-1})$ for those factors where $a_j^k\neq 0$. In particular, if these poles are pairwise different then 
\begin{equation}\label{computingresidue}
\mathrm{Res}_{z_k=\infty}\frac{h(\bz)\,\dbz}{\prod_{i=1}^N\omega_i}=\sum_{j:a_j^k\neq 0} \frac{-h(z_1,\ldots, z_{k-1}, w_j)}{a_j^k \prod_{i \neq j}w_i(z_1,\ldots, z_{k-1},w_j)}=\sum_{j:a_j^k\neq 0}\frac{-h(\bz)_{[z_k \rightarrow w_j]}}{\tilde{\prod}_{i=1}^N(\omega_i)_{[z_k \rightarrow w_j]}}
\end{equation}\label{computingres}
where $\tilde{\prod}_{\gamma \in \Gamma} \gamma=\prod_{\gamma \in \Gamma,\gamma \neq 0} \gamma$
denotes the product of the nonzero elements of the set and $[z_k \rightarrow w_j]$ means we substitute $w_j$ to $z_k$.
Then we take the next residue with respect to $z_{k-1}$ using the linearity of the residue and the same rule, and we iterate the process.    
\begin{example}
The rational expression $\frac{1}{z_1(z_1-z_2)}$ has two different Laurent expansions, but on $|z_1| \ll |z_2|$ we use $\frac{1}{z_1(z_1-z_2)}=\sum_{i=0}^\infty (-1)^i\frac{z_1^{i-1}}{z_2^{i+1}}$ to get $\res_{\bz=\infty} \frac{1}{z_1-z_2}=1$. An other example is $\res_{\bz=\infty}\frac{1}{(z_1-z_2)(2z_1-z_2)}=\mathrm{coeff}
_{(z_1z_2)^{-1}}\frac{1}{z_2^2}(1+\frac{z_1}{z_2}+\frac{z_1^2}{z_2^2}+\ldots)
(1+\frac{2z_1}{z_2}+\frac{4z_1^2}{z_2^2}+\ldots)=3$.
\end{example}
\begin{example} Let us revisit our toy example Example \ref{example}. Define the differential form 
\[\omega=\frac{-(z_2-z_1)^2(z_1+z_2)^2z_1z_2\dbz}{\prod_{i=1}^4(\mu_i-z_1)\prod_{i=1}^4(\mu_i-z_2)}\]
This is a meromorphic form in $z_2$ on $\PP^1$ with poles at $z_2=\mu_i, 1\le i\le 4$ and $z_2=\infty$. The poles at $\mu_i$ are non-degenerate and therefore applying the Residue Theorem we get 
\[\res_{z_2=\infty}\omega=-\sum_{i=1}^4\underbrace
{\frac{(\mu_i-z_1)^2(\mu_i+z_1)^2\mu_iz_1dz_1}{
\prod_{j=1}^4(\mu_j-z_1)\prod_{j\neq i}(\mu_j-\mu_i)}}_{z_2=\mu_i}=\sum_{i=1}^4
-\frac{(\mu_i-z_1)(\mu_i+z_1)^2\mu_iz_1dz_1}{
\prod_{j\neq i}(\mu_j-z_1)\prod_{j\neq i}(\mu_j-\mu_i)}\]
Repeating the same with $z_1$ we get
\begin{multline}\nonumber
\res_{z_1=\infty}\res_{z_2=\infty}\omega=\sum_{i=1}^4 \sum_{j\neq i}
-\frac{(\mu_i-\mu_j)(\mu_i+\mu_j)^2\mu_i\mu_j}{
\prod_{k\neq i,j}(\mu_k-\mu_j)\prod_{j\neq i}(\mu_j-\mu_i)}=\sum_{i=1}^4 \sum_{j\neq i}
\frac{(\mu_i+\mu_j)^2\mu_i\mu_j}{
\prod_{k\neq i,j}(\mu_k-\mu_j)\prod_{k\neq i,j}(\mu_k-\mu_i)}=\\
=2\sum_{\sigma \in S_4/S_2} \sigma \cdot \frac{(\mu_1+\mu_2)^2\mu_1\mu_2}{(\mu_3-\mu_1)(\mu_4-\mu_1)(\mu_3-\mu_2)(\mu_4-\mu_2)}=2
\int_{\grass(2,4)}c_1(\tau)^2c_2(\tau).
\end{multline}
On the other hand, using the above algorithm by expanding the rational form $\omega$ we get
\[\res_{z_1=\infty}\res_{z_2=\infty}\omega=2,\]
giving us the desired result $1$  for the integral. 
\end{example}
We start with the following iterated residue theorem on projective spaces:
\begin{proposition}
  \label{flagresidue} 
  For a polynomial $P(u)$ on $\CC$ we have 
\begin{equation}
  \label{projres}
\sum_{i=1}^n
\frac{P(\lambda_i)}
{\prod_{j\neq i}(\l_j-\l_i)}=\res_{z=\infty}
\frac{P(z)}{\prod_{j=1}^n(\l_j-z)}dz.
\end{equation}
\end{proposition}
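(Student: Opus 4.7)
The identity is the classical partial-fraction/residue-theorem statement on $\mathbb{P}^1$, and my plan is to derive it directly from the global residue theorem applied to a single meromorphic $1$-form.

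The plan is to consider the rational differential form
\[
\omega(z) \;=\; \frac{P(z)\,dz}{\prod_{j=1}^n(\lambda_j-z)}
\]
on $\mathbb{P}^1$. Assume first that the $\lambda_1,\dots,\lambda_n$ are pairwise distinct (this is harmless because both sides of \eqref{projres} are polynomial expressions in $\lambda_1,\dots,\lambda_n$ on the complement of the diagonals, and any polynomial identity extends by Zariski density). Then $\omega$ has simple poles exactly at $z=\lambda_1,\dots,\lambda_n$ together with a (possibly removable) pole at $z=\infty$, so it is a meromorphic form on the compact Riemann surface $\mathbb{P}^1$ whose pole set is $\{\lambda_1,\dots,\lambda_n,\infty\}$.

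Next I would compute each finite residue. At the simple pole $z=\lambda_i$ we have $\lambda_i-z = -(z-\lambda_i)$, so
\[
\res_{z=\lambda_i}\omega \;=\; \lim_{z\to\lambda_i}(z-\lambda_i)\cdot\frac{P(z)}{\prod_j(\lambda_j-z)} \;=\; -\,\frac{P(\lambda_i)}{\prod_{j\neq i}(\lambda_j-\lambda_i)}.
\]
The single sign is the only delicate point, and it comes from the fact that the factor $(\lambda_i-z)$ is written in the opposite order to the usual $(z-\lambda_i)$.

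Finally I would invoke the global residue theorem on $\mathbb{P}^1$, which states that the sum of all residues of a meromorphic $1$-form vanishes. With the orientation convention fixed in the excerpt — the torus $|z|=R$ is oriented so that $\res_{z=\infty}dz/z=-1$ — the "residue at infinity" as defined in \eqref{defresinf} is precisely the negative of the sum of the finite residues. Combining this with the computation above yields
\[
\res_{z=\infty}\omega \;=\; -\sum_{i=1}^n \res_{z=\lambda_i}\omega \;=\; \sum_{i=1}^n\frac{P(\lambda_i)}{\prod_{j\neq i}(\lambda_j-\lambda_i)},
\]
which is exactly \eqref{projres}.

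The only real obstacle is bookkeeping of signs and orientation; there is no hard analytic or algebraic content. One can alternatively prove the identity purely algebraically via the expansion \eqref{omegaexp} (expand $1/(\lambda_j-z)$ as $-\sum_{m\ge 0}\lambda_j^m/z^{m+1}$ and extract the coefficient of $z^{-1}$), which automatically respects the convention and makes the statement valid for $P$ of arbitrary degree without any assumption about a removable pole at infinity; this is the approach I would use to double-check the sign.
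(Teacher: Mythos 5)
Your proof is correct and follows essentially the same route as the paper: apply the residue theorem on $\PP^1$, observe that the residue at infinity is minus the sum of the finite residues, and track the sign coming from the factors $(\lambda_j - z)$. The paper's proof is just a terser version of yours; your extra remarks on sign-checking and the alternative coefficient-extraction argument are harmless elaborations.
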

\begin{proof}
  We compute the residue on the right hand side of \eqref{projres} using the Residue
  Theorem on the projective line $\CC\cup\{\infty\}$.  This residue is a contour
  integral, whose value is minus the sum of the $z$-residues of the
  form in \eqref{projres}. These poles are at $z=\lambda_j$,
  $j=1\ldots n$, and after canceling the signs that arise, we obtain the
  left hand side of \eqref{projres}.
 \end{proof} 
Now we prove the following iterated residue formula for the cohomology pairings of $\calx_k$.

\begin{proposition}\label{semple} Let $k\ge 2$ and let $\calx_{k}$ be the fibre of the Demailly-Semple $k$-jet bundle over $X$ at $x \in X$ endowed with the induced $T=(\CC^*)^n$ action from $T_{X,x}$. Let $P(u_1,\ldots, u_k)\in H_T^{k(n-1)}(\calx_k)$ be a homogeneous polynomial of degree $\dim(\calx_k)=k(n-1)$ in $u_i=c_1(\pi_{i,k}^*\calo_{\calx_i}(1))$. Then
\begin{equation*}  
\int_{\calx_{k}}P(\mathbf{u})=\sires \frac{\prod_{2\le t_1 \le t_2 \le k} -(z_{t_1}+z_{t_1+1}+ \ldots +z_{t_2})}{\prod_{1 \le s_1 < s_2 \le k} (z_{s_1}-z_{s_1+1}-\ldots -z_{s_2})}\frac{P(z_1,\ldots, z_k)\dbz}{\prod_{j=1}^k\prod_{i=1}^n(\l_i-z_1-\ldots -z_j)}
\end{equation*}
\end{proposition}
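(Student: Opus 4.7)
The plan is to start from the localisation identity of Corollary \ref{loconsemple} and convert each of the $k$ nested sums
\[
\int_{\calx_{k}}P(\mathbf{u}) \;=\; \sum_{(w_1,\ldots,w_k)\in\mathfrak{F}_k}\frac{P(w_1,\ldots,w_k)}{\prod_{j=1}^k \prod_{w\in\cals(w_1,\ldots,w_{j-1}),\,w\neq w_j}(w-w_j)}
\]
into a residue at infinity, by applying the one-variable identity of Proposition \ref{flagresidue} once for each index $j$. For fixed $w_1,\ldots,w_{j-1}$ the summation in $w_j$ runs over the $n$-element set $\cals(w_1,\ldots,w_{j-1})$ and Proposition \ref{flagresidue} rewrites it as $\res_{z_j=\infty}$ of an appropriate rational form. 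Iterating from $j=k$ down to $j=1$ converts the whole fixed-point sum into an iterated residue in formal variables $z_1,\ldots,z_k$ that replace $w_1,\ldots,w_k$.

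The second step is to identify the denominator produced by this procedure. Lemma \ref{si} describes $\cals(w_1,\ldots,w_{j-1})$ as the ``big list'' of $n+j-1$ linear forms
\[
\{\l_i-z_1-\cdots-z_{j-1}\}_{i=1}^{n}\;\cup\;\{z_r-z_{r+1}-\cdots-z_{j-1}\}_{r=1}^{j-2}\;\cup\;\{z_{j-1}\},
\]
from which one removes the single zero element together with the $j-2$ partial-sum negatives $\{-(w_t+\cdots+w_{j-1}):2\le t\le j-1\}$. After substituting formal $z_i$'s, none of the big-list entries is zero, so I would write $\prod_{w\in\cals}(w-z_j)$ as the big-list product divided by the missing factors $(0-z_j)=-z_j$ and $-(z_t+\cdots+z_j)$. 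Taking the big-list product across all $j=1,\ldots,k$ gives, after relabelling $(r,j)\mapsto(s_1,s_2)$, exactly
\[
\prod_{j=1}^k\prod_{i=1}^n(\l_i-z_1-\cdots-z_j)\;\cdot\;\prod_{1\le s_1<s_2\le k}(z_{s_1}-z_{s_1+1}-\cdots-z_{s_2}),
\]
which is the denominator of Proposition \ref{semple}.

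The third step is to collect the ``missing factors'' into the numerator. At level $j\ge 2$ they contribute $(-z_j)\prod_{t=2}^{j-1}\bigl(-(z_t+\cdots+z_j)\bigr)$, and the product over $j=2,\ldots,k$ telescopes into
\[
\prod_{2\le t_1\le t_2\le k}-(z_{t_1}+z_{t_1+1}+\cdots+z_{t_2}),
\]
where the diagonal terms $t_1=t_2$ account for the zero-element factors and the off-diagonal terms $t_1<t_2$ for the partial-sum-negative factors. Combined with the previous step this reproduces the right-hand side of Proposition \ref{semple}.

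The main obstacle is not the bookkeeping but the legitimacy of the iterative application of Proposition \ref{flagresidue}: each set $\cals(w_1,\ldots,w_{j-1})$ depends genuinely on the earlier $w_i$'s, so after passing to $z_j$ one must check that the resulting expression is again a rational function in $w_{j-1}$ of the correct shape (a polynomial numerator divided by a product of distinct linear factors) so that the next application of Proposition \ref{flagresidue} is valid. This is exactly what the explicit description of Lemma \ref{si} provides: once the missing-factor rewrite of step two is performed at each level, the resulting denominator at stage $j-1$ is visibly a product of pairwise distinct linear forms in the remaining $w$'s, and the recursion goes through. A minor additional check is that the orientation/sign in the formula \eqref{computingresidue} matches the sign convention of Proposition \ref{flagresidue} at every level, which is what produces the overall minus signs in the numerator $\prod-(z_{t_1}+\cdots+z_{t_2})$.
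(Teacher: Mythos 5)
Your proposal is correct and follows essentially the same route as the paper: both apply Proposition \ref{flagresidue} iteratively (innermost variable $z_k$ first) on the nested fixed-point sum of Corollary \ref{loconsemple}, then use Lemma \ref{si} to replace each factor $\prod_{w\in\cals(z_1,\ldots,z_{j-1})}(w-z_j)$ by the full big-list product with the missing (zero-element and partial-sum) factors collected into the numerator $\prod_{2\le t_1\le t_2\le k}(-z_{[t_1\ldots t_2]})$. The paper phrases the same cancellation slightly differently, stating formula \eqref{ab} and then observing via Remark \ref{important} that the diagonal terms $t_1=t_2=j$ absorb the $-z_j$ contributed by the unique zero element of $\cals$.
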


\begin{proof}
We use that $\calx_k \to \calx_{k-1}$ is a $\PP^{n-1}$ bundle over $\calx_{k-1}$ and therefore  
integration on $\calx_{k}$ can be achieved first integrating over the fibre followed by integration over $\calx_{k-1}$. That is, the fixed points on $\calx_{k}$ fiber over the fixed point set on $\calx_{k-1}$ and using Corollary \ref{loconsemple} we get 
\begin{multline}\nonumber
\int_{\calx_{k}}P(\mathbf{u})=\sum_{F_{w_1,\ldots ,w_k}\in \mathfrak{F}_k} \frac{P(w_1,\ldots, w_k)}{\prod_{j=1}^k \prod_{w \in \cals(w_1,\ldots ,w_{j-1}),w \neq w_j}(w-w_j)}=\\
=\sum_{F_{w_1,\ldots ,w_{k-1}}\in \mathfrak{F}_{k-1}} \frac{1}{\prod_{j=1}^{k-1} \prod_{\substack{w \in \cals(w_1,\ldots ,w_{j-1})\\ w \neq w_j}}(w-w_j)}\cdot 
\sum_{w_k \in S(w_1,\ldots, w_{k-1})}\frac{P(w_1,\ldots, w_k)}{\prod_{\substack{w \in \cals(w_1,\ldots ,w_{k-1}) \\ w \neq w_k}}(w-w_k)}
\end{multline}
Recall that the weights of the $T$ action on $\pi^{-1}(F_{w_1,\ldots, w_{k-1}})=\PP(\calv_{k-1,F_{w_1,\ldots, w_{k-1}}})$ are collected in the set $\cals(w_1,\ldots w_{k-1}) \subset \mathrm{Lin}(\l_1,\ldots ,\l_n)$, so by Proposition \ref{flagresidue} the second sum, which is the integral on the fibre $\pi^{-1}(F_{w_1,\ldots, w_{k-1}})\simeq \PP^{n-1}$ can be written as a residue
\[\sum_{w_k \in \cals(w_1,\ldots, w_{k-1})}\frac{P(w_1,\ldots, w_k)}{\prod_{w \in \cals(w_1,\ldots ,w_{k-1}),w \neq w_k}(w-w_k)}=\res_{z_k=\infty} \frac{P(w_1,\ldots, w_{k-1},z_k)\dbz }{\prod_{w \in \cals(w_1,\ldots ,w_{k-1})}(w-z_k)}
\]

Iterating this on the Demailly-Semple tower $\calx_k \to \calx_{k-1} \to \ldots \to \calx_1\to \{x\}$ using Proposition \ref{flagresidue} in every step we get
\[\int_{\calx_{k}}P(\mathbf{u})=\sires \frac{P(z_1,\ldots, z_k)\dbz}{\prod_{j=1}^k \prod_{w \in \cals(z_1,\ldots ,z_{j-1})}(w-z_j)}\]

Using the description of $\cals(z_1,\ldots, z_i)$ from Lemma \ref{si} we get the desired iterated residue formula
\begin{equation}\label{ab}
\int_{\calx_{k}}P(\mathbf{u})= \sires \frac{\prod_{2 \le t_1 \le t_2 \le k}-z_{[t_1\ldots t_2]}P(z_1,\ldots, z_k)\dbz}{\prod_{1 \le s_1 < s_2 \le k} (z_{s_1}-z_{[s_1+1\ldots s_2]})\prod_{j=1}^k\prod_{i=1}^n(\l_i-z_{[1\ldots j]})}.
\end{equation}
Note that the term corresponding to $t_1=t_2=j$ in the numerator cancels out with the term $-z_j$ in the denominator coming from the unique zero element in the set $\{\l_s-z_1-\ldots -z_{j-1},z_1-z_2-\ldots -z_{i}, \ldots ,z_{j-2}-z_{j-1},z_{j-1}:1\le s \le n\}$ in Lemma \ref{si}, see Remark \ref{important}.
\end{proof}

We come to the proof of Theorem \ref{maintechnical}.

\begin{proof}[Theorem \ref{maintechnical}] 

We integrate first along the fibres of the bundle $X_k \to X$, which corresponds to push-forward in cohomology. Since $(\pi_{0,k})_*\pi_{0,k}^*(h)=h$, we can repeat the proof of Proposition \ref{semple} with $P=P(\mathbf{u},h)
$ on the fibres, considering $h$ as a constant at this stage. The iterated residue collects the coefficient of $(z_1\ldots z_k)^{-1}$ which is a constant times $h^n$ because the total degree of the rational expression is $n-k$. Then since $\int_X h^n=d$, integration means the substitution $h^n=d$. 
\end{proof}  

\subsection{An example: Euler characteristic of the jet differentials bundle}
In the rest of the present paper focus on projective hypersurfaces $X \subset \PP^{n+1}$ and we use our iterated residue formula to prove the existence of global sections of some twisted jet differentials bundle. In order to prove this we follow \cite{dmr} and use Morse inequalities to reduce the question to the positivity of some appropriately defined tautological integral over the Demailly-Semple bundle. 

However, Theorem \ref{maintechnical} gives closed formula for other topological invariants of the jet differentials bundle as well, here we give the formula for the Euler characteristic
\[\chi(X,E_kT_X^*)=\sum_{i=0}^n (-1)^i \dim H^i(X,E_kT_X^*)\]
of the invariant jet differentials bundle $E_kT_X^*$. This can be expressed with the Chern character of $E_kT_X^*$ and the Todd class of $X$ as the integral 
\[\chi(X,E_{k})=\int_X [ch(E_{k})\cdot Td(T_X)]_n\]
Localisation on the Demailly-tower then gives the iterated residue formula:
\begin{corollary}
\begin{multline*}
\chi(X,\pi_*\calo_{X_k}(\mathbf{a}))=\\
=(-1)^k \int_{X} \mathrm{Res}_{\mathbf{z}=\infty} \frac{\prod_{2\le t_1 \le t_2 \le k} (z_{t_1}+z_{t_1+1}+ \ldots +z_{t_2})ch(\calo_{X_k}(\mathbf{a}))\cdot Td(T_X)}{\prod_{1 \le s_1 < s_2 \le k} (-z_{s_1}+z_{s_1+1}+\ldots +z_{s_2})
\prod_{j=1}^k (z_1+\ldots +z_j)^n}\cdot  
\prod_{j=1}^ks(1/(z_1+\ldots +z_k)) 
\end{multline*}
where 
\[ch(\calo_{X_k}(\mathbf{a}))=e^{a_1z_1+\ldots +a_kz_k}\]
and 
\[Td(T_X)=1+\frac{1}{2}c_1+\frac{1}{12}(c_1^2+c_2)+\ldots\]
\end{corollary}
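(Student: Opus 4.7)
The plan is to reduce the Euler characteristic on $X$ to a Hirzebruch--Riemann--Roch integral on the Demailly--Semple tower $X_k$, and then apply Theorem \ref{maintechnical} to convert that integral into an iterated residue.

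First, I would use the Leray spectral sequence together with the standard vanishing of higher direct images on a tower of projective bundles to identify
\[
\chi(X,\pi_*\calo_{X_k}(\mathbf{a}))=\chi(X_k,\calo_{X_k}(\mathbf{a})),
\]
which reduces the problem to computing an Euler characteristic on $X_k$. (For the ranges of $\mathbf{a}$ where $R^i\pi_*\calo_{X_k}(\mathbf{a})$ need not vanish the same identity holds by alternating sums in the spectral sequence, so the conclusion is insensitive to $\mathbf{a}$.) Then Hirzebruch--Riemann--Roch on $X_k$ gives
\[
\chi(X_k,\calo_{X_k}(\mathbf{a}))=\int_{X_k} ch(\calo_{X_k}(\mathbf{a}))\cdot Td(T_{X_k}),
\]
with $ch(\calo_{X_k}(\mathbf{a}))=\exp(a_1u_1+\ldots+a_ku_k)$.

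Next I would decompose $Td(T_{X_k})$ by iterating the two short exact sequences \eqref{sempleexact1} of Section \ref{subsec:comp}. The outer sequence $0\to T_{X_k/X}\to T_{X_k}\to \pi_{0,k}^*T_X\to 0$ yields $Td(T_{X_k})=Td(T_{X_k/X})\cdot\pi_{0,k}^*Td(T_X)$, and the pair of sequences relating $V_j$, $T_{X_j/X_{j-1}}$ and $\calo_{X_j}(-1)$ multiplicatively factor $Td(T_{X_k/X})$ into contributions coming from each stage of the tower, expressed as polynomials in the tautological classes $u_1,\ldots,u_k$ and the Chern roots of $T_X$. Thus the integrand becomes a universal expression in $\mathbf{u}$, $h$ and the Chern (equivalently Segre) classes of $T_X$.

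Now I apply Theorem \ref{maintechnical} with $P$ taken to be the top-degree (degree $n+k(n-1)$) component of $ch(\calo_{X_k}(\mathbf{a}))\cdot Td(T_{X_k})$. Because the iterated residue $\sires$ extracts the coefficient of $(z_1\cdots z_k)^{-1}$, only the top-degree piece of any inhomogeneous input survives, so we may substitute the full non-homogeneous $ch\cdot Td$ into the formula of Theorem \ref{maintechnical} without first isolating its top degree. The factor $Td(T_X)$ passes through the pushforward along $X_k\to X$ unchanged, while the relative Todd contributions combine with the polynomial expression in $\mathbf{u}$ to produce the numerator $e^{a_1z_1+\ldots+a_kz_k}$ and the overall shape $\prod_j s(1/(z_1+\ldots+z_j))$ appearing on the right-hand side. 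The sign changes from $(z_{s_1}-z_{s_1+1}-\ldots-z_{s_2})$ to $(-z_{s_1}+z_{s_1+1}+\ldots+z_{s_2})$, and from $-(z_{t_1}+\ldots+z_{t_2})$ to $(z_{t_1}+\ldots+z_{t_2})$, each contribute global signs $(-1)^{k(k-1)/2}$ which cancel in pairs, leaving the overall sign $(-1)^k$ that arises from orientation of the residues at infinity.

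The main obstacle will be the bookkeeping in the last step: carrying out the multiplicative factorisation of $Td(T_{X_k})$ in a form compatible with the Segre-class factor $\prod_{j=1}^k s\bigl(1/(z_1+\ldots+z_j)\bigr)$ of Theorem \ref{maintechnical}, and tracking the overall sign $(-1)^k$. The representation-theoretic content is routine once one knows that the iterated residue faithfully encodes the fibrewise localisation over each stage of the tower, as established in Propositions \ref{atiyahbott}, \ref{semple} and the proof of Theorem \ref{maintechnical}; so the proof is essentially a mechanical specialisation of Theorem \ref{maintechnical} to the integrand $ch\cdot Td$.
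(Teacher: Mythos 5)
Your overall route (reduce to an Euler characteristic on $X_k$, invoke Hirzebruch--Riemann--Roch, then feed the resulting intersection number into Theorem \ref{maintechnical}) is sensible, and the observation that the iterated residue automatically extracts the top-degree piece of an inhomogeneous $ch\cdot Td$ is correct. But there is a genuine gap at the crucial final step. After you factor $Td(T_{X_k})=Td(T_{X_k/X})\cdot\pi_{0,k}^*Td(T_X)$, you assert that the relative Todd contributions ``combine with the polynomial expression in $\mathbf{u}$ to produce the numerator $e^{a_1z_1+\ldots+a_kz_k}$ and the overall shape $\prod_j s(1/(z_1+\ldots+z_j))$.'' This is not true. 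The exponential is nothing but $ch(\calo_{X_k}(\mathbf{a}))$ with $u_i$ replaced by $z_i$, and the Segre factors $\prod_j s(1/(z_1+\ldots+z_j))$ together with the denominator $\prod_j(z_1+\ldots+z_j)^n$ are already present in Theorem \ref{maintechnical} for \emph{any} polynomial $P$: they come from the equivariant Euler classes of the fibre tangent spaces in the Atiyah--Bott localisation (Propositions \ref{atiyahbott}--\ref{semple}), i.e.\ they are the localisation weights, not Todd factors. So $Td(T_{X_k/X})$ is not absorbed by them; if you substitute $P=[ch(\calo_{X_k}(\mathbf{a}))\cdot Td(T_{X_k})]_{\rm top}$ into Theorem \ref{maintechnical}, the relative Todd class survives as an additional explicit factor in the numerator (expressed, via the exact sequences \eqref{sempleexact1}, as a power series in the $z_{[1\ldots j]}$ and the Chern roots of $T_X$). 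Your derivation as written produces the residue of $ch\cdot Td(T_{X_k/X})\cdot\pi^*Td(T_X)$, which you then silently identify with the corollary's formula containing only $ch\cdot Td(T_X)$; this identification is exactly what needs to be proved (or noticed as a discrepancy), and hand-waving ``they combine'' does not fill that hole. (The sign bookkeeping and the initial Leray/vanishing reduction are fine; in the paper's stated convention for $\sires$ the factor $(-1)^k$ is precisely the normalisation needed to convert the oriented contour integral into the coefficient of $(z_1\cdots z_k)^{-1}$.)
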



\section{Proof of Theorem \ref{mainthmone}}\label{sec:proofone}  

Let $X\subset \PP^{n+1}$ be a smooth hypersurface of degree $\deg X=d$ and let $X_k$ denote the $k$-level Demailly-Semple bundle on $X$. We start with recalling the following classical major result which  
connects jet differentials to the Green-Griffiths-Lang conjecture:

\begin{theorem}[Fundamental vanishing theorem \cite{gg,dem,siu1}]\label{demailly}
Assume that there exist integers $k,m>0$ and ample line bundle
$A\to X$ such that
\[H^0(X_k,\calo_{X_{k}}(m) \otimes \pi^*A^{-1})\simeq
H^0(X,E_{k,m}\cotx \otimes A^{-1})\neq 0\] has non zero sections
$\s_1,\ldots ,\s_N$, and let $Z\subset X_k$ be the base locus of
these sections. Then every entire holomorphic curce $f:\CC \to X$
necessarily satisfies $f_{[k]}(\CC)\subset Z$. In other words, for
every global $\GG_k$-invariant differential equation $P$ vanishing on
an ample divisor, every entire holomorphic curve $f$ must satisfy
the algebraic differential equation $P(f'(t),\ldots, f^{(k)}(t))\equiv 0$.
\end{theorem}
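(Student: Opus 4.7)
The plan is to exploit Demailly's lifting operator to lift $f$ to $f_{[k]}:\CC\to X_k$, pull back each section $\sigma_i$ to a holomorphic object on $\CC$, and then show via an Ahlfors--Schwarz / Liouville-type argument that these pullbacks must vanish identically. Taking the common zero locus of all the $\sigma_i$ then forces $f_{[k]}(\CC)\subset Z$.

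First I would construct the lift. Given a non-constant entire $f:\CC\to X$, iterate the lifting $\tilde f(t)=(f(t),[f'(t)])$ introduced in Section \ref{subsec:comp} to produce $f_{[k]}:\CC\to X_k$; although a priori only defined where $f'\neq 0$, it extends holomorphically across the zeros of $f'$ because $X_k$ is a smooth projective tower compactifying $J_k^{\reg}X/\GG_k$. Under the identification $H^0(X_k,\calo_{X_k}(m)\otimes\pi^*A^{-1})\simeq H^0(X,E_{k,m}\cotx\otimes A^{-1})$, a section $\sigma$ corresponds to a $\GG_k$-invariant polynomial differential operator $Q(f',\ldots,f^{(k)})$ of weighted degree $m$ taking values in $A^{-1}$; the reparametrisation invariance is exactly what makes $Q(f',\ldots,f^{(k)})$ well-defined along the curve, producing a holomorphic section of $f^*A^{-1}$ over $\CC$ which, up to tautological factors, coincides with $(f_{[k]})^*\sigma$.

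The core analytic step is to show $Q(f',\ldots,f^{(k)})\equiv 0$. Fix a smooth Hermitian metric $h_A$ on $A$ with strictly positive curvature form, available since $A$ is ample. For each radius $R>0$ restrict $f$ to the disk $\Delta_R=\{t\in\CC:|t|<R\}$ equipped with its Poincar\'e metric. Applying the Ahlfors--Schwarz lemma inductively on $k$---propagating the single-variable bound on $|f'|$ through the iterated projective bundle structure of $X_k$---yields a pointwise estimate of the form
\[
\bigl|Q(f',\ldots,f^{(k)})(t)\bigr|^2_{h_A^{-1}}\;\le\;\frac{C_k}{(R^2-|t|^2)^m}
\]
on $\Delta_R$, with a constant $C_k$ independent of $R$. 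The matching of the weighted degree $m$ with the $m$-th power of the Poincar\'e factor is forced by the reparametrisation invariance of $Q$, and this matching is precisely why one passes from the Green--Griffiths bundle $E_{k,m}^{GG}$ to its $U_k$-invariant subbundle $E_{k,m}$.

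Letting $R\to\infty$ at each fixed $t$ sends the right-hand side to $0$, so $Q(f',\ldots,f^{(k)})\equiv 0$ on $\CC$; equivalently $(f_{[k]})^*\sigma_i\equiv 0$ for every $i$, and therefore $f_{[k]}(\CC)\subset\bigcap_i\{\sigma_i=0\}=Z$, proving the theorem. I expect the main obstacle to be the Ahlfors--Schwarz estimate above: one must either construct a well-behaved hyperbolic jet metric on $X_k$ or push through Demailly's inductive argument on $k$ carefully, and it is this analytic ingredient---rather than any cohomological or invariant-theoretic manipulation---that truly powers the vanishing theorem.
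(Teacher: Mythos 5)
The paper does not prove this theorem; it is stated as a recalled result and attributed to Green--Griffiths, Demailly, and Siu. So there is no in-paper proof to compare against. Your sketch follows the standard approach in that cited literature (lift the curve, pull back the jet differential, kill it with a negatively-curved jet metric and the Ahlfors--Schwarz lemma), and the overall skeleton is correct.

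Two places where your sketch is a bit too optimistic. First, the claim that $f_{[k]}$ ``extends holomorphically across the zeros of $f'$ because $X_k$ is a smooth projective tower'' is not a valid argument: smoothness and projectivity of the target do not by themselves give removability of singularities of a holomorphic map out of a punctured disk. The actual argument is local and elementary: if $f$ is non-constant, then $f'$ has isolated zeros, and near such a point $t_0$ one writes $f'(t)=(t-t_0)^\nu g(t)$ with $g(t_0)\neq 0$, so the class $[f'(t)]=[g(t)]\in\PP(T_X)$ extends holomorphically; one then repeats this inductively at each stage of the tower. Second, the displayed Ahlfors--Schwarz estimate is stated as if it were a corollary of the one-variable Schwarz lemma applied fibrewise, but the heart of the matter is the construction of a Finsler-type jet metric on $X_k$ (or on the vertical tangent of $V_{k-1}$) with strictly negative curvature in the relevant directions, twisted by the positive curvature of $A$; it is precisely this metric construction that makes the exponent $m$ appear and that requires the $\GG_k$-invariance of the jet differential. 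In other words, the inequality is the conclusion of the hard step, not an intermediate observation. With those two points tightened up, your argument matches the standard proof of the fundamental vanishing theorem in the sources the paper cites.
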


Note, that by Theorem 1. of \cite{div2}, 
\[H^0(X,E_{k,m}\cotx \otimes A^{-1})= 0\]
holds for all $m\ge 1$ if $k<n$, so we can restrict our attention to the range $k\ge n$. 

To control the order of vanishing of these differential forms along
the ample divisor we choose $A$ to be --as in  \cite{dmr} -- a
proper twist of the canonical bundle of $X$. Recall that the
canonical bundle of the smooth, degree $d$ hypersurface $X$ is
\[K_X=\calo_X(d-n-2),\]
which is ample as soon as $d\ge n+3$.
The following theorem summarises the results of \S 3 in \cite{dmr}.

\begin{theorem}[Algebraic degeneracy of entire curves \cite{dmr}]\label{germtoentire}
Assume that $n=k$, and there exist a $\delta=\delta(n) >0$ and $D=D(n,\delta)$ such that
\[H^0(X_{n},\calo_{X_{n}}(m) \otimes \pi^*K_X^{-\delta m})\simeq
H^0(X,E_{d,m}\cotx \otimes K_X^{-\delta m})\neq 0 \]
whenever $\deg(X)>D(n,\delta)$ for some $m \gg 0$. 
Then the Green-Griffiths-Lang conjecture holds whenever 
\[\deg(X) \ge \max(D(n,\delta), \frac{n^2+2n}{\delta}+n+2).\]
\end{theorem}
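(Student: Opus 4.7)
The plan is to combine the Fundamental Vanishing Theorem (Theorem~\ref{demailly}) with the global slanted-vector-field construction of Clemens-Voisin-Siu-Merker, following the strategy of \cite{dmr}. Since $K_X = \calo_X(d-n-2)$ is ample whenever $d\geq n+3$, any nonzero section $\sigma \in H^0(X, E_{n,m}\cotx \otimes K_X^{-\delta m})$ given by the hypothesis vanishes along the effective ample divisor $\delta m\, K_X$, so Theorem~\ref{demailly} forces every nonconstant entire curve $f:\CC\to X$ to satisfy $f_{[n]}(\CC) \subset Z_\sigma$, where $Z_\sigma \subset X_n$ is the zero locus of $\sigma$. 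A single such section only constrains $f_{[n]}(\CC)$ to a hypersurface of $X_n$; one must produce enough further sections to cut the common base locus down to something whose image in $X$ is a proper subvariety $Y\subsetneqq X$.

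First, I would embed $X$ as a fibre of the universal smooth hypersurface $\calx \to S$ over the parameter space $S$ of smooth degree-$d$ hypersurfaces in $\PP^{n+1}$, and propagate $\sigma$ to a relative section $\tilde\sigma$ of the relative Demailly-Semple bundle over a Zariski-open neighbourhood of $[X]\in S$. Second, applying the meromorphic vector fields on the jet bundle $J_n(\calx/S)$ constructed by Siu-Merker-Voisin (see \cite{voisin,siu2,merker2}) -- whose pole orders in the $\PP^{n+1}$-direction are bounded by $n+2$ -- I would Lie-differentiate $\tilde\sigma$ successively in $N$ independent moduli directions. Each derivative produces a new section of
\[
\calo_{X_n}(m)\otimes \pi^*K_X^{-\delta m}\otimes \pi^*\calo_X\bigl(N(n+2)m\bigr),
\]
which continues to vanish along an ample twist of the original divisor as long as $\delta m(d-n-2) - N(n+2)m > 0$, i.e.\ $d > n+2 + N(n+2)/\delta$. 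Theorem~\ref{demailly} applied to each $\sigma_i$ then places $f_{[n]}(\CC)$ inside the common zero locus $Z = \bigcap_{i=0}^{N}Z_{\sigma_i}\subset X_n$.

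To conclude, I would invoke the genericity of $X$ as in \cite[\S 3]{dmr}: for a generic hypersurface, taking $N = n$ successive Lie derivatives of $\tilde\sigma$ along $n$ suitably chosen directions of $S$ yields $n+1$ sections whose common base locus $Z\subset X_n$ satisfies $\pi_{0,n}(Z) \subsetneqq X$. Substituting $N=n$ into the ampleness inequality above gives
\[
d \;>\; \frac{n(n+2)}{\delta} + n + 2 \;=\; \frac{n^{2}+2n}{\delta} + n + 2,
\]
so the algebraic degeneracy conclusion $f(\CC)\subset Y:=\pi_{0,n}(Z)$ holds as soon as $\deg(X)\geq \max\bigl(D(n,\delta),\,(n^{2}+2n)/\delta + n + 2\bigr)$, as claimed.

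The main obstacle is the second step: constructing the global meromorphic vector fields on $J_n(\calx/S)$ with sharp pole-order bound $n+2$, and verifying that the $N=n$ successive derivatives of $\tilde\sigma$ really contribute algebraically independent constraints so that the common base locus drops codimension as expected for generic $[X]\in S$. This is the technical heart of the DMR strategy (building on Clemens, Voisin, Siu, and Merker), and the numerical constants $(n+2)$ and $N=n$ entering the final degree bound are dictated precisely by that construction, rather than by the iterated-residue machinery developed in the present paper.
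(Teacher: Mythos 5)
The paper itself gives no proof of Theorem~\ref{germtoentire}: it is stated (with its attribution \cite{dmr} in the header) as a summary of \S 3 of Diverio--Merker--Rousseau, so the relevant comparison is with that argument, not with anything in the present paper. Your overall plan --- fundamental vanishing, extension of the section to the universal family, Lie differentiation along Merker's slanted vector fields, then genericity of the fibre --- is indeed the DMR strategy, and you correctly observe that the bound $\frac{n^2+2n}{\delta}+n+2$ is dictated by the pole order of those vector fields and has nothing to do with the iterated-residue machinery developed here.

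The problem is that the specific constants you attribute to the slanted-vector-field construction do not match the references you cite. Merker's theorem in \cite{merker2} establishes global generation of $T_{J_n^{\mathrm{vert}}(\mathcal{H})}\otimes p^{*}\calo_{\PP^{n+1}}(n^{2}+2n)\otimes q^{*}\calo_{\PP^{N_{d}}}(\ast)$ away from a small singular set; thus the pole order of the frame in the $\PP^{n+1}$-direction for $n$-jets is $n^{2}+2n$, not $n+2$. Correspondingly, the differentiation count in DMR is not the fixed number $n$: one differentiates up to order $m$ (the weighted degree of the jet differential $P$), and the ampleness constraint reads $\delta m(d-n-2)>m(n^{2}+2n)$, from which the factor $m$ cancels to yield $\delta(d-n-2)>n^{2}+2n$, i.e.\ $d>n+2+(n^{2}+2n)/\delta$. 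Your version factors the same number $n^{2}+2n$ as (number of derivatives $n$) $\times$ (pole order $n+2$); this lands on the correct final inequality, but only by compensating misattributions, and the step you yourself flag as ``the technical heart'' is precisely where they occur. With the pole order corrected to $n^{2}+2n$ and the derivative count allowed to scale with $m$, your reconstruction becomes an accurate account of the DMR proof.
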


For $(a_1,\ldots, a_k)\in \ZZ^k$ define the following line bundle on $X_k$:
\begin{equation*}
\calox{k}{\baa}=\pi_{1,k}^*\calox{1}{a_1}\otimes \pi_{2,k}^*\calox{2}{a_2}\otimes \cdots \otimes \calox{k}{a_k}.
\end{equation*}  
\begin{proposition}[\cite{dem} Prop. 6.16, \cite{div2}  Prop. 3.2]
\begin{enumerate}
\item If $a_1\ge 3a_2,\ldots ,a_{k-2}\ge 3a_{k-1}$, and $a_{k-1}\ge 2a_k\ge 0$, then line bundle $\calox{k}{\baa}$ is relatively nef over $X$. 
If, moreover, 
\begin{equation}\label{cond}
a_1\ge 3a_2,\ldots ,a_{k-2}\ge 3a_{k-1}\text{ and }a_{k-1}> 2a_k >0
\end{equation} 
holds, then $\calox{k}{\baa}$ is relatively ample over $X$. 
\item 
Let $\calox{}{1}$ denote the hyperplane divisor on $X$. If \eqref{cond} holds, then $\calox{k}{\baa}\otimes \pi_{0,k}^*\calox{}{l}$ is nef, provided that $l \ge 2|\baa|$, where $|\baa|=a_1+\ldots +a_k$. 
\end{enumerate}
\end{proposition}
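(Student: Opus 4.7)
Both parts are standard facts due to Demailly (\cite{dem}, Prop.~6.16) and Diverio (\cite{div2}, Prop.~3.2); I would prove them by induction on $k$, exploiting the two exact sequences in \eqref{sempleexact1}. The base case $k=1$ is immediate: since $X_1 = \PP(V_0) = \PP(T_X)$ over $X$, the line bundle $\calox{1}{a_1}$ is the $a_1$-th power of the relative tautological bundle, hence relatively nef over $X$ for $a_1 \ge 0$ and relatively ample for $a_1 > 0$.

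For the inductive step in part (1), the map $\pi_k : X_k \to X_{k-1}$ is the projectivization $\PP(V_{k-1}) \to X_{k-1}$, so $\calox{k}{a_k}$ is relatively ample over $X_{k-1}$ for $a_k > 0$. The key technical ingredient is the second short exact sequence of \eqref{sempleexact1},
\[
0 \to \calox{k}{} \to \pi_k^* V_{k-1} \otimes \calox{k}{-1} \to T_{X_k/X_{k-1}} \to 0,
\]
which lets one relate the relatively nef/ample cone at level $k$ to that at level $k-1$. Dualizing and twisting by $\calox{k}{1}$ produces a surjection expressing $T_{X_k/X_{k-1}}$ as a quotient of $\pi_k^* V_{k-1} \otimes \calox{k}{-1}$; combining this with the analogous sequence at level $k-1$ (which embeds $V_{k-1}$ into $T_{X_{k-1}/X_{k-2}} \oplus \calox{k-1}{-1}$ fiberwise), and tensoring with $\calox{k-1}{2}$, shows that $V_{k-1} \otimes \calox{k-1}{2}$ carries enough positivity on the fibers of $X_k \to X$ so that the coefficient $a_{k-1}$ must beat $2 a_k$ to preserve relative nefness (resp.\ ampleness under strict inequality). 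Iterating the same argument one level further yields the compounded bound $a_{k-2} \ge 3 a_{k-1}$: one unit comes from the previous projectivization and another from the twist $\calox{k-1}{-1}$ in the sequence. Combining these estimates with the inductive hypothesis for $\calox{k-1}{a_1,\ldots,a_{k-1}}$ over $X$ gives part (1).

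For part (2), only the absolute (not merely fiberwise) positivity is new. Since $X\hookrightarrow \PP^{n+1}$, the restricted Euler sequence on $\PP^{n+1}$ shows that $T_X\otimes \calo_X(2)$ is generated by global sections, so $T_X \otimes \calo_X(2)$ is nef, while $T_X \otimes \calo_X(1)$ is not in general. I would propagate this through the tower: by the exact sequences at each level, on $X_i$ the bundle $\pi_{0,i}^*\calo_X(2)\otimes \calox{i}{1}$ inherits nefness from $V_{i-1}\otimes \pi_{0,i-1}^*\calo_X(2)$, so $\calox{i}{a_i}\otimes \pi_{0,i}^*\calo_X(2a_i)$ is relatively nef over $X$ for each $i$. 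Multiplying together the required hyperplane twists at all $k$ levels, and using part (1) to combine these into a single line bundle on $X_k$, gives that $\calox{k}{\baa}\otimes \pi_{0,k}^*\calo_X(l)$ is nef as soon as $l \ge 2(a_1+\cdots+a_k) = 2|\baa|$.

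\emph{Main obstacle.} The delicate point is not the existence of positivity thresholds -- that follows from general nonsense -- but the sharp tracking of the numerical constants $2$ and $3$ through the induction, as any looser estimate would propagate into a weaker final degree bound in Theorem \ref{mainthmone}. The bookkeeping rests entirely on careful use of the two sequences \eqref{sempleexact1} at each level of the tower.
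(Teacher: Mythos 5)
This proposition is not proved in the paper: it is quoted verbatim with citations to Demailly (\cite{dem}, Prop.~6.16) and Diverio (\cite{div2}, Prop.~3.2), so there is no ``paper's proof'' to compare against. Your reconstruction follows the right overall strategy from those references (induction up the tower, using the two exact sequences in \eqref{sempleexact1} to control the relative nef/ample cone), but it has one genuine error and one substantive gap.

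The error is in part~(2). In Demailly's convention $\PP(V)$ parametrizes lines, so $\calo_{\PP(V)}(1)$ is nef precisely when $V^*$ is nef; jet differentials are sections of $\Sym^m V^*$, and all positivity throughout the theory lives on the cotangent side. What you need is therefore that $T_X^*\otimes\calo_X(2)=\Omega_X(2)$ is nef, not $T_X\otimes\calo_X(2)$. The correct argument is: $\Omega_{\PP^{n+1}}(2)$ is globally generated by the twisted (cotangent) Euler sequence, its restriction to $X$ is again globally generated, and the conormal sequence $0\to\calo_X(2-d)\to\Omega_{\PP^{n+1}}(2)|_X\to\Omega_X(2)\to 0$ exhibits $\Omega_X(2)$ as a quotient, hence globally generated, hence nef. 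Your version appeals to $T_X\subset T_{\PP^{n+1}}|_X$, but a subbundle of a globally generated bundle need not be globally generated; the argument only works on the quotient side, which is the $\Omega$ side.

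The gap is in part~(1): the sentence explaining why the threshold is $2$ at the top level and $3$ for the lower levels (``one unit comes from the previous projectivization and another from the twist'') is not an argument but a heuristic. The actual proof requires identifying the precise relatively nef building blocks --- essentially that $\calo_{X_j}(1)\otimes\pi_j^*\calo_{X_{j-1}}(2)$ is relatively nef over $X_{j-2}$, obtained by dualizing the first sequence at level $j-1$, twisting by $\calo_{X_{j-1}}(2)$, and observing that the graded pieces $\calo_{X_{j-1}}(3)$ and $T^*_{X_{j-1}/X_{j-2}}\otimes\calo_{X_{j-1}}(2)$ are each relatively nef --- and then writing a general $\calo_{X_k}(\baa)$ as a nonnegative combination of these blocks, which is where the recursive constraints $a_{j}\ge 3a_{j+1}$ (with the final one $a_{k-1}\ge 2a_k$) arise. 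You flag this as the ``delicate point,'' which is correct, but as written the bookkeeping is not carried out and the inequalities are asserted rather than derived.
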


Theorem \ref{germtoentire} accompanied with the following theorem gives us Theorem \ref{mainthmone}.

\begin{theorem}\label{main}
Let $X\subset \PP^{n+1}$ be a smooth complex hypersurface with ample canonical bundle, that is $\deg X\ge n+3$. If  $a_i=n^{8(n+1-i)}$,$\d=\frac{1}{2n^{8n}}$ and $d>D(n)=6n^{8n}$
then
\[H^0(X_n,\calo_{X_n}(|\baa|) \otimes \pi^*K_X^{-\d |\baa|})\simeq
H^0(X,E_{n,|\baa|}\cotx \otimes K_X^{-\d |\baa|})\neq 0, \] 
nonzero. 
\end{theorem}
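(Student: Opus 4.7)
The plan is to apply the algebraic holomorphic Morse inequalities (Trapani--Demailly) on $X_n$ to the line bundle
\[ L := \calo_{X_n}(\baa) \otimes \pi_{0,n}^* K_X^{-\delta|\baa|}, \]
and then to use Theorem \ref{maintechnical} to turn the resulting intersection number into an explicit polynomial in $d$. First I decompose $L$ as a difference of nef $\QQ$-line bundles. With $a_i = n^{8(n+1-i)}$ one has $a_{i-1}/a_i = n^8 \geq 3$ and $a_{n-1} > 2 a_n > 0$ for $n \geq 2$, so by the proposition recalled above,
\[ F := \calo_{X_n}(\baa) \otimes \pi_{0,n}^* \calo_X(2|\baa|) \]
is nef (in fact relatively ample over $X$). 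Since $K_X = \calo_X(d-n-2)$ and $d \geq n+3$, the $\QQ$-divisor
\[ G := \pi_{0,n}^* \calo_X\bigl( 2|\baa| + \delta |\baa| (d-n-2) \bigr) \]
is nef, and one checks $L = F \otimes G^{-1}$ as $\QQ$-line bundles.

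Morse inequalities then yield
\[ h^0(X_n, m L) \;\geq\; \frac{m^{n^2}}{(n^2)!}\bigl( F^{n^2} - n^2\, F^{n^2 - 1}\cdot G \bigr) + o(m^{n^2}), \]
so it is enough to establish the single numerical inequality
\[ \mathcal{I}(n,d) \;:=\; F^{n^2} - n^2\, F^{n^2-1}\cdot G \;>\; 0 \qquad \text{for } d > 6 n^{8n}. \]
Both $F^{n^2}$ and $F^{n^2-1}\cdot G$ are homogeneous of degree $n^2 = \dim X_n$ in the tautological classes $u_i = c_1(\pi_{i,n}^* \calo_{X_i}(1))$ and in $h$, so Theorem \ref{maintechnical} applies and converts each intersection number into an iterated residue at infinity of a rational function in $z_1, \ldots, z_n$ whose coefficients are polynomials in $h$ and the Segre classes $s_j(X)$. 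The hypersurface identity $(1+h)^{n+2} = (1+dh)\, c(X)$ makes each $s_j(X)$ an explicit polynomial in $h$ that is linear in $d$, and a final application of $\int_X h^n = d$ produces $\mathcal{I}(n,d)$ as an explicit polynomial in $d$ (with coefficients depending on $n$).

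The heart of the argument, and the place where I expect the main difficulty, is the effective asymptotic analysis of $\mathcal{I}(n,d)$. The hierarchical choice $a_1 \gg a_2 \gg \cdots \gg a_n$ with gap factor $n^8$ is designed precisely so that in the multinomial expansion of $F^{n^2} = (\sum_i a_i u_i + 2|\baa|\, h)^{n^2}$ and the subsequent iterated residue evaluation in the domain $z_1 \ll \cdots \ll z_n$, a single dominant monomial controls the leading $d$-contribution while all mixed monomials collapse into lower-order error. One must then verify that this leading positive contribution outweighs the subtracted term $n^2\, F^{n^2-1}\, G$, whose extra factor of $G$ carries one explicit power of $d$ through $\delta|\baa|(d-n-2)$. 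The calibration $\delta = 1/(2 n^{8n})$ keeps $\delta |\baa|$ of order $1/2$, preventing $G$ from overwhelming $F$, and the threshold $d > 6 n^{8n}$ is chosen so that crude upper bounds on the Laurent expansion in the residue formula are enough to absorb every subleading term. Once $\mathcal{I}(n,d) > 0$ is established, Theorem \ref{main} follows, and combined with Theorem \ref{germtoentire} it delivers Theorem \ref{mainthmone}.
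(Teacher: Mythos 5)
Your proposal follows the paper's proof route closely and accurately: the Morse-inequality reduction to positivity of the intersection number, the decomposition $L=F\otimes G^{-1}$ with $F=\calox{n}{\baa}\otimes \pi_{0,n}^*\calox{}{2|\baa|}$ and $G=\pi_{0,n}^*\calox{}{2|\baa|}\otimes \pi_{0,n}^*K_X^{\delta|\baa|}$, the invocation of Theorem \ref{maintechnical}, and the role of the hierarchical weights $a_i=n^{8(n+1-i)}$ and the calibration $\delta=1/(2n^{8n})$ are all exactly as in Section 5. So the structure is right, and the two easy ingredients (nefness of $F$ and $G$; reduction to $F^{n^2}-n^2F^{n^2-1}G>0$) are correctly in place.

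Where the proposal stops short of being a proof is in the last paragraph, and I'd flag two concrete gaps. First, the crucial organizational device in the paper is not just that $\mathcal{I}(n,d)$ is a polynomial in $d$; the paper shows it factors as $d\cdot p(d)$ with $\deg_d p = n$ (because $d$ enters linearly through each $s_j(X)$ and through the final substitution $h^n=d$), and then establishes the explicit coefficient bounds $p_n>0$ and $|p_{n-l}|<3n^{8ln}p_n$ (Proposition \ref{polynomial}). The positivity threshold $d>6n^{8n}$ is not obtained by ``crude upper bounds absorbing every subleading term'' directly in the residue expansion; it is obtained by applying a Fujiwara-type root bound (Lemma \ref{estimation}) to the polynomial $p$, converting the coefficient-ratio bound $|p_{n-l}|/p_n<(3n^{8n})^l$ into a real-root bound $d>2\cdot 3n^{8n}$. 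Without this step your sketch does not actually produce the stated numerical threshold. Second, the phrase ``a single dominant monomial controls the leading $d$-contribution'' is a bit misleading: there is a dominant term $B_{\mathbf{0}}$ (the monomial $\bi=\mathbf{0}$) only for the top coefficient $p_n$; each lower coefficient $p_{n-l}$ has its own dominant term, located at the shifted multi-index $\bi(l)=-e_{n-l+1}-\cdots-e_n$ (Lemma \ref{lemmaotherterms}), and the point of the hierarchical scaling is to make that dominant term computable and the defect-graded remainder geometrically small. Carrying out the estimates in Subsections \ref{subsec:leadingcoeff}--\ref{subsec:othercoeffs} (in particular Lemmas \ref{decomposition}, \ref{lemmaai}, \ref{lemmaaih}) is where all the work lies, and that is what your proposal correctly anticipates as ``the heart of the argument'' but does not supply.
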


To prove Theorem \ref{main} we use the
algebraic Morse inequalities of Demailly and Trapani and replace the cohomological computations of \cite{dmr}
with the study of the iterated residue.  Let $L\to X$
be a holomorphic line bundle over a compact K\"ahler manifold of
dimension $n$ and $E \to X$ a holomorphic vector bundle of rank $r$.
Demailly in \cite{dem00} proved the following 
\begin{theorem}[Algebraic Morse inequalities \cite{dem00,trap}]\label{morse}
Suppose that $L=F\otimes G^{-1}$ is the difference of the nef line
bundles $F,G$. Then for any nonnegative integer $q\in \ZZ_{\ge 0}$ 
\[\sum_{j=0}^q(-1)^{q-j}h^j(X,L^{\otimes m}\otimes E)\le
r \frac{m^n}{n!}\sum_{j=0}^q(-1)^{q-j}{n \choose j}F^{n-j}\cdot
G^j+o(m^n).\] 
In particular, $q=1$ asserts that $L^{\otimes
m}\otimes E$ has a global section for $m$ large provided 
\[F^n-nF^{n-1}G>0.\]
\end{theorem}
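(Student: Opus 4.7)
\medskip

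The plan is to combine the algebraic Morse inequalities (Theorem~\ref{morse}) with the iterated residue formula of Theorem~\ref{maintechnical}. Set $N=\dim X_n=n^2$ and write $h=\pi_{0,n}^*c_1(\calo_X(1))$, $u_i=c_1(\pi_{i,n}^*\calo_{X_i}(1))$. With $\baa=(a_1,\ldots,a_n)$, $a_i=n^{8(n+1-i)}$, the ratios $a_i/a_{i+1}=n^8\ge 3$ satisfy condition \eqref{cond}, so by the proposition the line bundle
$$F:=\calo_{X_n}(\baa)\otimes \pi_{0,n}^*\calo_X(2|\baa|)$$
is nef on $X_n$; the bundle $G:=\pi_{0,n}^*\calo_X\bigl((2+\delta(d-n-2))|\baa|\bigr)$ is visibly nef. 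Since $K_X=\calo_X(d-n-2)$, we have the identification
$$\calo_{X_n}(\baa)\otimes \pi_{0,n}^*K_X^{-\delta|\baa|}\;=\;F\otimes G^{-1}.$$
Applying Theorem~\ref{morse} with $q=1$, $L=F\otimes G^{-1}$ and trivial $E$, it suffices to establish the strict numerical positivity
\begin{equation}\label{eq:morsepos}
\int_{X_n}c_1(F)^{N-1}\bigl(c_1(F)-N\,c_1(G)\bigr)\;>\;0,
\end{equation}
since this produces nonzero sections of $L^{\otimes m}$ for some $m\gg 0$, which after rescaling $\baa$ (the ratios $a_i/a_{i+1}$ are preserved) yields the desired nonvanishing statement.

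Writing $a(u)=\sum_i a_i u_i$, $b=2|\baa|$ and $c=(2+\delta(d-n-2))|\baa|$, the integrand in \eqref{eq:morsepos} is the degree-$N$ homogeneous polynomial
$$P(u_1,\ldots,u_n,h)=\bigl(a(u)+bh\bigr)^{N-1}\bigl(a(u)+(b-Nc)h\bigr).$$
Theorem~\ref{maintechnical} then rewrites $\int_{X_n}P$ as an iterated residue at infinity in variables $z_1,\ldots,z_n$ with coefficients in the cohomology of $X$. Since $X\subset\PP^{n+1}$ is a hypersurface, the Segre classes are determined by $(1+h)^{n+2}=(1+dh)c(X)$; substituting and using $\int_X h^n=d$ converts the residue into an explicit polynomial expression $Q(d,n,\baa,\delta)$ obtained as the coefficient of $(z_1\cdots z_n)^{-1}$ in a Laurent expansion in the domain $z_1\ll\cdots\ll z_n$.

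The heart of the argument is the estimate $Q(d,n,\baa,\delta)>0$ for $d\ge 6n^{8n}$ and $\delta=1/(2n^{8n})$. I would expand each factor $(z_{s_1}-z_{[s_1+1\ldots s_2]})^{-1}$ as a geometric series in $z_{[s_1+1\ldots s_2]}/z_{s_1}$ and each Segre factor $s(1/z_{[1\ldots j]})$ as a polynomial of degree $\le n$, then collect the contribution of each resulting monomial to the coefficient of $(z_1\cdots z_n)^{-1}$. The $d$-polynomial $Q$ has degree $N$; its leading term as $d\to\infty$ comes from the factor $(a(u)+(b-Nc)h)$ through the $-Nc\cdot h$ piece combined with the Segre-induced powers of $d$, producing a coefficient proportional to $-N\delta|\baa|\cdot d^{N}$ times a \emph{negative} combinatorial weight; taken together this dominant term is positive. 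The aggressive geometric spacing $a_i=n^{8(n+1-i)}$ is essential: it makes $a_1^{N-1}a_i$ dominate all other monomials in $\baa$, and the exponential gap $n^8$ between consecutive $a_i$ beats the combinatorial factors of size $n^{O(n)}$ arising from the $\le (2n)!$ terms in the residue expansion.

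The main obstacle is precisely the last step: bounding the error terms in the residue expansion uniformly in $n$. This requires enumerating the Laurent monomials that contribute to $(z_1\cdots z_n)^{-1}$, tracking signs (the numerator contributes factors $-(z_{t_1}+\cdots+z_{t_2})$ while the denominator contributes differences), and showing that the positive contribution dominates by at least a factor exceeding all $n^{O(n)}$ combinatorial losses when $d>6n^{8n}$. Once this is done, substituting into \eqref{eq:morsepos} and invoking Theorem~\ref{morse} completes the proof.
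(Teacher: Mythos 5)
Your proposal does not prove the statement it is attached to. Theorem \ref{morse} is the algebraic Morse inequality of Demailly and Trapani: a general asymptotic bound
\[\sum_{j=0}^q(-1)^{q-j}h^j(X,L^{\otimes m}\otimes E)\le r\,\frac{m^n}{n!}\sum_{j=0}^q(-1)^{q-j}{n \choose j}F^{n-j}\cdot G^j+o(m^n)\]
valid for any line bundle $L=F\otimes G^{-1}$ written as a difference of nef line bundles on a compact manifold. Your text opens by \emph{invoking} this theorem and then sketches how to verify the numerical positivity $F^{n^2}-n^2F^{n^2-1}G>0$ on the Demailly--Semple tower via the iterated residue of Theorem \ref{maintechnical}. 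That is the content of Theorem \ref{main} and Theorem \ref{maintheoremone} in the paper, not of Theorem \ref{morse}; as a proof of the stated theorem your argument is circular, since its first step assumes the conclusion. (For what it is worth, the paper itself does not reprove Theorem \ref{morse} either --- it is quoted from \cite{dem00,trap}.)

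A genuine proof would have to establish the displayed inequality itself. The standard route is purely algebraic: write $L^{\otimes m}=F^{\otimes m}\otimes G^{-\otimes m}$, reduce to the case where $G$ is very ample by perturbing with a small multiple of an ample class and passing to the limit, then peel off the powers of $G^{-1}$ one at a time using the restriction exact sequences to divisors in $|G|$, and control the alternating sums of $h^j$ by induction on the dimension together with asymptotic Riemann--Roch for nef line bundles; the $q=1$ consequence $h^0>0$ when $F^n-nF^{n-1}G>0$ then follows from $h^0\ge h^0-h^1$. None of these ingredients appear in your write-up, and the residue and estimation machinery you describe, while it is the right tool for the paper's Theorem \ref{maintheoremone}, is irrelevant to this particular statement.
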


For $d>n+3$ the canonical bundle $K_X \simeq \calox{}{d-n-2}$ is ample, and therefore we have the following expressions for $\calox{k}{\baa}$ and $\calox{k}{\baa}\otimes \pi_{0,k}^*K_X^{-\d \baa}$ as a difference of two nef line bundles over $X$:
\[\calox{k}{\baa}=(\calox{k}{\baa}\otimes \pi_{0,k}^*\calox{}{2|\baa|})\otimes (\pi_{0,k}^*\calox{}{2|\baa|})^{-1}\] 
\[\calox{k}{\baa}\otimes \pi_{0,k}^*K_X^{-\d \baa}=(\calox{k}{\baa}\otimes \pi_{0,k}^*\calox{}{2|\baa|})\otimes (\pi_{0,k}^*\calox{}{2|\baa|}\otimes \pi_{0,k}^*K_X^{\d \baa})^{-1}\]

By the Morse inequalities we need to prove the positivity of the following intersection product:
\begin{multline}\label{inkad}
I(n,k,\baa,\d)=(\calox{k}{\baa}\otimes \pi_{0,k}^*\calox{}{2|\baa|})^{n+k(n-1)}-\\
(n+k(n-1))(\calox{k}{\baa}\otimes \pi_{0,k}^*\calox{}{2|\baa|})^{(k+1)(n-1)}\cdot (\pi_{0,k}^*\calox{}{2|\baa|}\otimes \pi_{0,k}^*K_X^{\d \baa}).
\end{multline}
Let $h=c_1(\calox{}{1})$ denote the first Chern class of the tautological line bundle $\calox{}{1}$, $c_l=c_l(T_X)$ for $l=1,\ldots ,n$, and $u_s=c_1(\calox{s}{1})$ for $s=1,\ldots, s$. Then $c_1(K_X)=-c_1=(d-n-2)h$, and the intersection product we have to estimate becomes
\begin{multline}\label{intnumber}
I_{n,k,\baa,\d}(u_1,\ldots, u_k,\pi_{0,k}^*h)=\\ 
=(a_1u_1+\ldots +a_ku_k+2|\baa|\pi^*h)^{(k+1)(n-1)}
\left(a_1u_1+\ldots +a_ku_k+S_{n,k,\d,d}|\baa|\pi^*h\right) 
\end{multline} 
where $S_{n,k,\d,d}=2-(n+k(n-1))(2+\d(d-n-2))$.

In our forthcoming computations we will use the shorthand notation 
\begin{equation}\label{shorthand}
x_{[i\ldots j]}=x_i+x_{i+1}+\ldots +x_j.
\end{equation} 
for the sum of entries of a constant vector $(x_i,\ldots ,x_j)$.
Applying Theorem \ref{maintechnical} we arrive at  
\begin{corollary}\label{mainprop} With the notation in \eqref{inkad}, \eqref{intnumber} and \eqref{shorthand}  
\begin{equation}\label{maine}
I(n,k,\baa,\d)= \int_{X} \sires \frac{(-1)^k\prod_{2\le t_1 \le t_2 \le k} (z_{[t_1\ldots t_2]})I_{n,k,\baa,\d}(z_1,\ldots, z_k,h)\dbz}{\prod_{1 \le s_1 < s_2 \le k} (-z_{s_1}+z_{[s_1+1\ldots s_2]})\prod_{j=1}^k\prod_{i=1}^n(\l_i+z_{[1\ldots j]})}
\end{equation}
where integration on the right hand side means the substitution $h^n=d$.  
\end{corollary}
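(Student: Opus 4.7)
The corollary is a direct specialization of Theorem \ref{maintechnical} to the polynomial arising from the intersection product \eqref{inkad}. The plan has two essentially computational steps.

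The first step is to rewrite $I(n,k,\baa,\d)$ as $\int_{X_k} I_{n,k,\baa,\d}(u_1,\ldots,u_k,\pi^*h)$, where $I_{n,k,\baa,\d}$ is the polynomial of \eqref{intnumber}. Since $\dim X_k = n + k(n-1) = (k+1)(n-1) + 1$, the intersection product \eqref{inkad} factors as $F^{(k+1)(n-1)}(F - \dim X_k \cdot G)$, where $F = \calox{k}{\baa}\otimes\pi^*\calox{}{2|\baa|}$ and $G = \pi^*\calox{}{2|\baa|}\otimes\pi^*K_X^{\d\baa}$. Computing $c_1(F) = \sum_i a_i u_i + 2|\baa|\pi^*h$ and $c_1(G) = |\baa|(2 + \d(d-n-2))\pi^*h$, the linear factor equals $\sum_i a_i u_i + S_{n,k,\d,d}|\baa|\pi^*h$ with $S_{n,k,\d,d} = 2 - (n+k(n-1))(2 + \d(d-n-2))$, which matches the definition of $I_{n,k,\baa,\d}$ in \eqref{intnumber}.

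The second step is to apply Theorem \ref{maintechnical} with $P = I_{n,k,\baa,\d}$. The Segre class factor $\prod_j s(1/z_{[1\ldots j]})/z_{[1\ldots j]}^n$ appearing in Theorem \ref{maintechnical} is rewritten as $\prod_{j=1}^k\prod_{i=1}^n(\l_i + z_{[1\ldots j]})^{-1}$ via the elementary identity
\begin{equation*}
\prod_{i=1}^n(\l_i + z) = z^n c(T_X)(1/z) = \frac{z^n}{s(T_X)(1/z)},
\end{equation*}
where the $\l_i$ are interpreted as the Chern roots of $T_X$. The remaining formal adjustments---the $(-1)^k$ factor in the numerator and the rewriting of $\prod_{s_1<s_2}(z_{s_1} - z_{[s_1+1\ldots s_2]})$ as $\prod_{s_1<s_2}(-z_{s_1} + z_{[s_1+1\ldots s_2]})$---are handled by elementary bookkeeping, using $\prod_{2 \le t_1 \le t_2 \le k}-(z_{[t_1\ldots t_2]}) = (-1)^{k(k-1)/2}\prod_{2\le t_1\le t_2\le k} z_{[t_1\ldots t_2]}$ together with the orientation conventions of the iterated residue.

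Finally, since $X \subset \PP^{n+1}$ is a smooth hypersurface of degree $d$, we have $\int_X h^n = d$, and the adjunction identity $(1+h)^{n+2} = (1+dh)c(X)$ recursively expresses all elementary symmetric polynomials in the $\l_i$, hence all Chern classes of $T_X$, as polynomials in $h$. After evaluation of the iterated residue the integrand becomes a polynomial purely in $h^n$ with coefficients depending on $d, n, \baa, \d$, so integration over $X$ reduces to the substitution $h^n = d$, as recorded in \eqref{maine}. The main obstacle throughout is purely algebraic sign-tracking; no conceptually new input is required beyond Theorem \ref{maintechnical} and the factorization carried out in the first step.
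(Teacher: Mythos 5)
Your derivation follows the same route the paper takes (which in the text is only the phrase ``Applying Theorem \ref{maintechnical} we arrive at'' followed by the statement): identify $I(n,k,\baa,\d)$ as $\int_{X_k}I_{n,k,\baa,\d}(\mathbf{u},\pi^*h)$ via the first Chern classes of $F$ and $G$, apply Theorem \ref{maintechnical} with $P=I_{n,k,\baa,\d}$, and convert the Segre factor using $z^n/s(1/z)=z^n c(1/z)=\prod_{i=1}^n(\l_i+z)$ applied at $z=z_{[1\ldots j]}$ for each $j$. Those steps and the Segre--to--Chern--roots identity are correct.

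The place where the argument is too vague is the overall $(-1)^k$. As you yourself note, $\prod_{2\le t_1\le t_2\le k}$ and $\prod_{1\le s_1<s_2\le k}$ each run over exactly $\binom{k}{2}$ index pairs; extracting the minus signs from the numerator gives $(-1)^{\binom{k}{2}}$, rewriting the denominator factors $z_{s_1}-z_{[s_1+1\ldots s_2]}$ as $-(-z_{s_1}+z_{[s_1+1\ldots s_2]})$ gives another $(-1)^{\binom{k}{2}}$, and these two cancel. So your own bookkeeping produces a formula with \emph{no} $(-1)^k$, while \eqref{maine} has one; the appeal to ``orientation conventions of the iterated residue'' names the culprit without explaining it. The concrete resolution is that Theorem \ref{maintechnical} is stated with the reading ``the iterated residue is equal to the coefficient of $(z_1\cdots z_k)^{-1}$ in the Laurent expansion,'' whereas $\res_{\mathbf{z}=\infty}$ as defined by the contour integral \eqref{defresinf} with the normalisation $\ires\,\dbz/(z_1\cdots z_k)=(-1)^k$ equals $(-1)^k$ times that coefficient. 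The $(-1)^k$ in \eqref{maine} is exactly this conversion factor. You should spell this out: without it, a reader who follows your (correct) cancellation of the $(-1)^{\binom{k}{2}}$ factors will conclude that \eqref{maine} carries a spurious sign, when in fact both statements are consistent once one tracks which convention each uses.
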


\begin{remark}\label{comptools}
We can eliminate the $\lambda_i$'s from \eqref{maine} as follows. The Chern classes of $X$ are expressible with $d,h$ using the following identity:
\[(1+h)^{n+2}=(1+dh)c(X),\]
where $c(X)=c(T_X)$ is the total Chern class of $X$. 
Using the shorthand notations in \eqref{shorthand} we get for $1\le j \le n$
\begin{multline}\nonumber
\prod_{i=1}^n (\l_i+z_{[1\ldots j]})=(z_{[1\ldots j]})^n \prod_{i=1}^n (1+\frac{\l_i}{z_{[1\ldots j]}})=(z_{[1\ldots j]})^nc\left(\frac{1}{z_{[1\ldots j]}}\right)=\\ \nonumber =(z_{[1\ldots j]})^n \frac{\left(1+\frac{h}{z_{[1\ldots j]}}\right)^{n+2}}{1+\frac{dh}{z_{[1\ldots j]}}}=
\frac{(z_{[1\ldots j]}+h)^{n+2}}{(z_{[1\ldots j]}+dh)(z_{[1\ldots j]})} \nonumber
\end{multline}
\end{remark}
and we arrive at the following formula:
\begin{proposition}\label{mainformula} Let $I(n,k,\baa,\d)$ be the intersection number on the Demailly-Semple bundle defined in \eqref{inkad}. Then with the notation \eqref{intnumber} 
\begin{equation}\nonumber
I(n,k,\baa,\d)=
\int_{X} \sires \frac{(-1)^k \prod_{1\le t_1 \le t_2 \le k} (z_{[t_1\ldots t_2]})\prod_{j=1}^k (z_{[1\ldots j]}+dh)I_{n,k,\baa,\delta}(\bz,h)\dbz}{\prod_{1 \le s_1 < s_2 \le k} (-z_{s_1}+z_{[s_1+1 \ldots s_2]}) \prod_{j=1}^k (z_{[1\ldots j]}+h)^{n+2}}.
\end{equation}
\end{proposition}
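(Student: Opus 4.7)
The plan is to derive Proposition \ref{mainformula} directly from the iterated residue formula of Corollary \ref{mainprop} by eliminating the Chern roots $\lambda_1, \ldots, \lambda_n$ of $T_X$ using the hypersurface-specific identity recorded in Remark \ref{comptools}. No further geometric input is needed beyond what has already been established; the proof reduces to a single algebraic substitution inside the residue integrand followed by a short combinatorial tidying of the resulting products.

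Concretely, I would start from the formula of Corollary \ref{mainprop} and, for each $j = 1, \ldots, k$, replace the factor $\prod_{i=1}^n(\lambda_i + z_{[1\ldots j]})$ in the denominator using
$$\prod_{i=1}^n(\lambda_i + z_{[1\ldots j]}) = \frac{(z_{[1\ldots j]} + h)^{n+2}}{(z_{[1\ldots j]} + dh)\, z_{[1\ldots j]}},$$
which follows from $(1+h)^{n+2} = (1+dh)\, c(T_X)$ evaluated at $1/z_{[1\ldots j]}$ after clearing the common factor $z_{[1\ldots j]}^n$. After this substitution the denominator loses all $\lambda_i$-factors and acquires $\prod_{j=1}^k(z_{[1\ldots j]}+h)^{n+2}$, while the numerator picks up two new products, $\prod_{j=1}^k(z_{[1\ldots j]}+dh)$ and $\prod_{j=1}^k z_{[1\ldots j]}$.

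The last step is to recognise that the extra factors $\prod_{j=1}^k z_{[1\ldots j]}$ are precisely the terms indexed by $t_1 = 1$ in the upper-triangular product $\prod_{1 \le t_1 \le t_2 \le k} z_{[t_1\ldots t_2]}$, so that
$$\prod_{2 \le t_1 \le t_2 \le k} z_{[t_1\ldots t_2]} \cdot \prod_{j=1}^k z_{[1\ldots j]} = \prod_{1 \le t_1 \le t_2 \le k} z_{[t_1\ldots t_2]}.$$
Inserting everything back into the residue integrand of Corollary \ref{mainprop} yields exactly the expression claimed in Proposition \ref{mainformula}.

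I do not expect any serious obstacle: the computation is pure bookkeeping on a rational expression, and the iterated residue, being a formal coefficient extraction in the Laurent expansion on $|z_1| \ll \cdots \ll |z_k|$, commutes with multiplying the integrand through by the same polynomial factor. The only point requiring a touch of care is the combinatorial observation that the extra linear factors produced by the Chern class substitution exactly complete the upper-triangular product in the numerator, which is the place where the specific hypersurface geometry of $X$ enters the final residue formula.
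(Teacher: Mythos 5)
Your proposal is correct and follows exactly the paper's own route: the paper's Remark~\ref{comptools} records precisely the identity
\[
\prod_{i=1}^n(\lambda_i + z_{[1\ldots j]}) = \frac{(z_{[1\ldots j]}+h)^{n+2}}{(z_{[1\ldots j]}+dh)\,z_{[1\ldots j]}},
\]
derived from $(1+h)^{n+2}=(1+dh)c(X)$, and the Proposition is obtained simply by substituting this into Corollary~\ref{mainprop}, with the extra factors $\prod_{j=1}^k z_{[1\ldots j]}$ absorbing into the upper-triangular product exactly as you describe. Your remark about the validity of the substitution inside the residue is phrased a bit loosely (what is used is that coefficient extraction in the Laurent domain $1\ll|z_1|\ll\cdots\ll|z_k|$ is compatible with the cohomological relation among $\lambda_i,h,d$, since the final coefficient is a degree-$n$ class in $H^\bullet(X)$), but the substance of the argument matches the paper's.
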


This formula has the pleasant feature that it expresses the aimed intersection number directly in terms of $n,k,\baa,d,\d$. Indeed, the result of the iterated residue is a polynomial in $n,k,\d$ and $h^n$, and integrating over $X$ simply means a substitution $d=h^n$.
\subsection{Computations with the iterated residue for $n=k$}\label{subsec:compresidue}
From now on we assume that $n=k$, focusing on Theorem \ref{main}.
The iterated residue is formally a contour integral, but as we have explained in Sect.~\ref{subsec:transform}, it simply means an expansion of the rational expression respecting the order $1\ll|z_1|\ll \ldots \ll |z_k|$. 
Using the notation introduced in \eqref{shorthand} we have the following expansions in Proposition \ref{mainformula}:

\begin{enumerate}
\item $\frac{1}{z_{[1\ldots j]}+h}=\frac{1}{z_j}\left(1-\frac{z_{[1\ldots j-1]}+h}{z_j}+\left(\frac{z_{[1\ldots j-1]}+h}{z_j} \right)^2-\ldots \right)$ for $j\ge 1$ where for $j=1$ we define $z_{[1\ldots j-1]}=0$.
\item $\frac{z_{[t_1 \ldots t_2]}}{-z_{t_1}+z_{[t_1+1\ldots t_2]}}=1+\frac{2z_{t_1}}{z_{t_2}}\left(1+\frac{z_{t_1}-z_{[t_1+1\ldots t_2-1]}}{z_{t_2}}-\left(\frac{z_{t_1}-z_{[t_1+1\ldots t_2-1]}}{z_{t_2}}\right)^2+\ldots  \right)$ for $1\le t_1<t_2 \le n$.
\end{enumerate}
For $n=k$ we use the notation $I_{n,\baa,\d,d}$ for the form and by \eqref{intnumber}
\begin{equation*}
I_{n,\baa,\d,d}(\bz,h)=(a_1z_1+\ldots +a_nz_n+2|\baa|h)^{n^2-1}\left(a_1z_1+\ldots +a_nz_n+S_{n,\d}|\baa|h-n^2\d|\baa|dh\right)
\end{equation*}
where 
\[S_{n,\d}=2-2n^2+n^2(n+2)\d.\]
Substituting these into Proposition \ref{mainformula} we get the following:
\begin{multline}\label{main2g}
I(n,\baa,\d,d)=(-1)^n\int_{X} \sires \underbrace{\prod_{j=1}^n\left(1+\frac{z_{[1\ldots j-1]}+dh}{z_j}\right)}_{A^0(\bz)} \cdot \\
\underbrace{\prod_{1\le t_1 < t_2 \le n} \left(1+\frac{2z_{t_1}}{z_{t_2}}\left(1+\frac{z_{t_1}-z_{[t_1+1\ldots t_2-1]}}{z_{t_2}}-\ldots  \right)\right)}_{A^1(\bz)} \cdot
\underbrace{\prod_{j=1}^n\left(1-\frac{z_{[1\ldots j-1]}+h}{z_j}+\ldots \right)^{n+2}}_{A^2(\bz)} \\ 
\underbrace{\frac{(a_1z_1+\ldots +a_nz_n+2|\baa|h)^{n^2-1}\left(a_1z_1+\ldots +a_nz_n+S_{n,\d}|\baa|h-n^2\d|\baa|dh\right)}{(z_1 \ldots z_n)^n}}_{B(\bz)} \dbz 
\end{multline}
Let 
\[A(\bz)=A^0(\bz)A^1(\bz)A^2(\bz)\]
denote the product of the first three rational expressions for short. 

\begin{definition} Fix a basis $\{e_1,\ldots e_n\}$ of $\ZZ^n$. For a lattice point $\bi=(i_1,\ldots,i_n)\in \ZZ^n$ we call 
\[\deff(\bi)=ni_1+(n-1)i_2+\ldots +i_n\]
the defect of $\bi$. The positive lattice semigroup is defined as
\[\Lambda^+=\bigoplus_{i<j}\ZZ^{\ge 0}(e_i-e_j)\oplus \bigoplus_{i=1}^n \ZZ^{\le 0} e_i .\]
The negative lattice points are elements of $\Lambda^-=-\Lambda^+$. Finally, for $\baa,\bbb \in \ZZ^n$ we say that $\baa \ge \bbb$ if there is a $\mathbf{c} \in \Lambda^+$ with $\bbb+\mathbf{c}=\baa$.
\end{definition}
We now prove the following theorem which together with the Morse inequalities gives us Theorem \ref{main}. 
\begin{theorem}\label{maintheoremone}
Let $a_i=n^{8(n+1-i)}$ and $\d=\frac{1}{2n^{8n}}$. Then $I(n,\baa,\d,d)>0$ if  $d>6n^{8n}$.
\end{theorem}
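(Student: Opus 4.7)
The strategy is to evaluate the iterated residue in \eqref{main2g} explicitly enough to isolate a dominant contribution and to bound all remaining terms using the hierarchy $a_i = n^{8(n+1-i)}$ and the smallness of $\d$.

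First, I expand each of the three factors $A^0(\bz)$, $A^1(\bz)$, $A^2(\bz)$ of \eqref{main2g} as formal Laurent series in the regime $z_1 \ll \cdots \ll z_n$ using the expansion rules stated just before \eqref{main2g}. The key structural observation, motivating the definitions of $\Lambda^+$ and $\deff$, is that each factor produces only monomials $\bz^{\mathbf{c}}$ with $\mathbf{c}\in\Lambda^+$, so multiplication by $A(\bz)=A^0A^1A^2$ can only shift $\bz$-exponents inside the positive cone while weakly decreasing $\deff$. Simultaneously I expand $B(\bz)$ multinomially and split the last linear factor $P_2=a_1z_1+\cdots+a_nz_n+S_{n,\d}|\baa|h-n^2\d|\baa|dh$ into a $d$-free part $P_2^+$ and the term $-n^2\d|\baa|dh$. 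This gives a decomposition $I(n,\baa,\d,d)=I^+(d) - n^2\d|\baa|\cdot d\cdot I^-(d)$, with $I^{\pm}$ polynomials in $d$ of degree at most $n+1$ (one factor of $d$ coming from $A^0$'s maximal $d^n$ contribution together with $\int_X h^n = d$).

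Second, I identify a dominant monomial in $I^+$. The natural candidate arises from taking $dh/z_j$ from every factor of $A^0$, the constant terms of $A^1$ and $A^2$, the all-$a_jz_j$ expansion of $P_1=(a_1z_1+\cdots+a_nz_n+2|\baa|h)^{n^2-1}$, and $a_{j_0}z_{j_0}$ from $P_2^+$. Summing over $j_0\in\{1,\ldots,n\}$ and applying $\int_X h^n=d$, this yields the diagonal contribution to the top $d^{n+1}$-coefficient of $I^+$, of size proportional to $(a_1\cdots a_n)^n$ times an explicit positive multinomial constant $n(n^2-1)!/((n-1)!(n!)^{n-1})$. Any competing monomial at the same $d$-degree must either use a nonconstant shift from $A^1A^2$ (strictly reducing $\deff$) or employ a nondiagonal multinomial split of $P_1P_2^+$; by the hierarchy $a_i/a_{i+1}=n^8$, each such exchange costs a factor of at least $n^{-8}$, while the total count of contributing monomials is bounded by $n^{O(n^2)}$. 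A single $n^{-8}$ gap therefore dominates the entire combinatorial cloud, so the diagonal term controls the top $d$-coefficient of $I^+$ up to a factor $1+O(n^{-7})$.

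Third, I bound the negative contribution $n^2\d|\baa|\cdot d\cdot I^-$. The same procedure identifies the top $d^n$-coefficient of $I^-$ as an analogous diagonal term of size $\sim(a_1\cdots a_n)^n\sum_{j_0}1/a_{j_0}\sim(a_1\cdots a_n)^n/a_n$ with $a_n=n^8$. Using $\d=1/(2n^{8n})$ and $|\baa|\le 2n^{8n}$ gives $n^2\d|\baa|\le n^2$, so the negative top contribution is at most $n^2\cdot n^{-8}=n^{-6}$ times the positive top contribution; the top $d^{n+1}$-coefficient of $I$ is thus positive with ample margin. A parallel bounding at subleading $d$-orders, using the same hierarchy mechanism, rules out sign reversal for $d>6n^{8n}$, the constant $6$ absorbing fixed combinatorial overhead from the preceding steps.

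The main obstacle is the bookkeeping in step two: the expansion of $A(\bz)B(\bz)$ contains up to $n^{O(n^2)}$ contributing monomials, and uniform control requires both the exponential hierarchy $a_i=n^{8(n+1-i)}$ (supplying $n^{-8}$ decay per off-diagonal exchange) and the one-sided semigroup $\Lambda^+$ (keeping the shift set finite and graded by $\deff$). Once these uniform bounds are in place, steps one and three reduce to direct arithmetic and the positivity $I(n,\baa,\d,d)>0$ for all $d>6n^{8n}$ follows.
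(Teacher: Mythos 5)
Your plan has the right skeleton — identify the diagonal dominant term, exploit the geometric hierarchy $a_i/a_{i+1}=n^8$, and control the rest of the expansion — and you correctly compute the dominant contribution $(a_1\cdots a_n)^n\binom{n^2}{n,\ldots,n}$ (your $n(n^2-1)!/((n-1)!(n!)^{n-1})$ is the same number). You also correctly observe that the Laurent monomials produced by $A=A^0A^1A^2$ live in $\Lambda^+$ and are graded by the defect $\deff$. But the decisive estimate in your second paragraph is not established and, as stated, is false.

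You write: ``each such exchange costs a factor of at least $n^{-8}$, while the total count of contributing monomials is bounded by $n^{O(n^2)}$. A single $n^{-8}$ gap therefore dominates the entire combinatorial cloud.'' A single factor $n^{-8}$ cannot dominate $n^{O(n^2)}$ terms; if each of $N$ competing terms were merely a factor $n^{-8}$ smaller than the leading one, their sum would be of order $N n^{-8}\sim n^{O(n^2)-8}$, which is enormous. What actually makes the argument close is a \emph{defect-graded} estimate, and this is precisely what is missing. Two inputs have to be combined at each defect level $i$: the $n^{-8i}$ decay of the $B$-coefficient coming from $\prod a_t^{i_t}$; the growth of the $A$-coefficients, bounded by $|A^1_{\bi}|,|A^2_{\bi}|<n^{3\deff(\bi)}$ (the paper's Lemma \ref{lemmaai}, with the $h$-shifted variant Lemma \ref{lemmaaih}); and the count of contributing lattice points at defect $i$, bounded by $(n-1)^i$ (Lemma \ref{decomposition}). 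Only after multiplying these together does one get a net factor per defect of order $2/n^4$, and it is the convergence of the resulting geometric series in $i$ — not a one-shot $n^{-8}$ gap — that shows the off-diagonal cloud sums to a small fraction of the dominant term. Without some version of Lemmas \ref{decomposition}, \ref{lemmaai}, \ref{lemmaaih} your claim ``$1+O(n^{-7})$'' has no basis.

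Your third paragraph compounds the problem by treating the subleading coefficients in a sentence. The paper writes $I(n,\baa,\d,d)=d\,p(d)$ with $\deg_d p = n$, isolates a different dominant term $B_{\bz^{\bi(l)}h^l}$ with $\bi(l)=-e_{n-l+1}-\cdots-e_n$ for each coefficient $p_{n-l}$, proves $|p_{n-l}|<3n^{8ln}p_n$ via the same defect-graded machinery, and then invokes the elementary root bound (Lemma \ref{estimation}: if $|p_{n-l}|<D^l p_n$ then $p(d)>0$ for $d>2D$) with $D=3n^{8n}$ to obtain exactly $d>6n^{8n}$. Your account (``the constant $6$ absorbing fixed combinatorial overhead'') misidentifies the source of the $6$: it is $2\cdot 3$ from the Fujiwara-type root bound and the $3$ in the coefficient inequality, and it cannot be obtained without proving those subleading estimates. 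As written, the proposal does not constitute a proof; you would need to supply the defect-graded lemmas and the explicit bound on each $p_{n-l}$.
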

For $\bi \in \ZZ^n, j,k\in \ZZ$ let $A_{\bz^{\bi}d^jh^k}$ denote the coefficient of $\bz^{\bi}d^jh^k$ in $A(\bz)$, and use similar notations for coefficients in $B(\bz)$.
\begin{lemma}\label{observation}
$A_{\bz^\bi(dh)^m}=\coeff_{\bz^{\bi}(dh)^m}A(\bz)=0$ unless $\bi \in \Lambda^+$, for any $m\ge 0$.  
\end{lemma}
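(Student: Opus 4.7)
The plan is to prove the stronger statement that \emph{every} monomial $\bz^{\bi}d^a h^b$ appearing in the Laurent expansion of $A(\bz)=A^0(\bz)A^1(\bz)A^2(\bz)$ satisfies $\bi\in\Lambda^+$; the lemma then follows by extracting coefficients in $(dh)^m$. The key structural point is that $\Lambda^+$, being the semigroup generated by $\{e_i-e_j:i<j\}\cup\{-e_i:1\le i\le n\}$, is closed under addition, so it suffices to verify that each individual factor of $A^0$, $A^1$, $A^2$, when expanded in the residue regime $1\ll|z_1|\ll\cdots\ll|z_n|$, contributes only monomials whose $\bz$-exponent lies in $\Lambda^+$.

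I would proceed factor by factor. For $A^0$, the $j$-th factor $1+\frac{z_{[1\ldots j-1]}+dh}{z_j}$ contributes either $1$, or $\frac{z_r}{z_j}$ for some $r<j$ (corresponding to $e_r-e_j\in\Lambda^+$), or $\frac{dh}{z_j}$ (the $\bz$-part is $-e_j\in\Lambda^+$). For $A^1$, the factor indexed by $(t_1,t_2)$ with $t_1<t_2$ is $\frac{z_{[t_1\ldots t_2]}}{-z_{t_1}+z_{[t_1+1\ldots t_2]}}$; since the iterated residue expands in the regime where $z_{t_2}$ dominates $z_{t_1},\ldots,z_{t_2-1}$, the geometric expansion in $z_{t_2}^{-1}$ produces monomials of the form $\frac{z_{s_1}\cdots z_{s_p}}{z_{t_2}^{p}}$ with every $s_i\in\{t_1,\ldots,t_2-1\}$. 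Each such monomial equals $\sum_i(e_{s_i}-e_{t_2})$, a sum of generators with $s_i<t_2$, hence in $\Lambda^+$. For $A^2$, writing each factor as $\bigl(\frac{z_j}{z_{[1\ldots j]}+h}\bigr)^{n+2}$, the geometric expansion yields monomials $\frac{(\text{monomial in }z_1,\ldots,z_{j-1})\cdot h^a}{z_j^{p}}$; after separating out $h^a$ the $\bz$-exponent is $\sum_{l<j}\alpha_l(e_l-e_j)+(-a)e_j\in\Lambda^+$, and raising to the $(n+2)$-th power merely takes products of such terms.

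Finally, the product $A=A^0A^1A^2$ is a sum of products of such factor-contributions; since the $\bz$-exponent of a product of monomials is the sum of the individual $\bz$-exponents, and $\Lambda^+$ is closed under addition, every monomial in the expanded series $A(\bz)$ has $\bz$-exponent in $\Lambda^+$. Extracting the coefficient of $(dh)^m$ (in whichever sense---either the pure coefficient of $d^mh^m$, or the coefficient of $d^m$ with $h$ left free) therefore yields a Laurent series in $\bz$ supported only on exponents $\bi\in\Lambda^+$, as claimed.

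I do not anticipate any real obstacle; the argument is essentially a bookkeeping verification. The only subtle point is to respect the expansion regime dictated by the iterated residue, which forces the $A^1$-factor to be expanded in inverse powers of the \emph{larger}-indexed variable $z_{t_2}$ (rather than $z_{t_1}$). This is precisely what makes the contribution $e_{s_i}-e_{t_2}$ land in $\Lambda^+$ (whose generators $e_i-e_j$ have $i<j$), and a wrong choice of expansion would ruin the sign convention underlying $\Lambda^+$.
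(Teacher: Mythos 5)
Your proposal is correct and follows essentially the same line of reasoning as the paper, which observes directly that every monomial in the expansion of $A(\bz)$ is a product of factors of the form $z_i/z_j$, $h/z_j$, and $dh/z_j$ with $i<j$, each of which has $\bz$-exponent in $\Lambda^+$. You have simply spelled out the factor-by-factor bookkeeping (and the role of the expansion regime $|z_1|\ll\cdots\ll|z_n|$) in more detail than the paper's one-line argument.
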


\begin{proof} From \eqref{main2g} we see that all monomials appearing after multiplying out is the product of terms of the form $\frac{z_i}{z_j},\frac{h}{z_j}$ and $\frac{dh}{z_j}$ with $1\le i<j \le n$, this implies the result. 
\end{proof}  

Let's step back a bit looking at formula \eqref{main2g}. The residue is by definition the coefficient of $\frac{1}{z_1\ldots z_n}$ in the appropriate Laurent expansion of the big rational expression in $z_1,\ldots, z_n$, $n, d, h$ and $\d$, multiplied by $(-1)^n$. We can therefore omit the $(-1)^n$ factor and simply compute the corresponding coefficient. The result is a polynomial in $n,d,h,\d$, and in fact, a relatively easy argument shows that it is a polynomial in $n,d,\d$ multiplied by $h^n$ 

Indeed, giving degree $1$ to $z_1,\ldots ,z_n,h$ and $0$ to $n,d,\d$, the rational expression in the residue has total degree $0$. Therefore the coefficient of $\frac{1}{z_1\ldots z_n}$ has degree $n$, so it has the form $h^n p(n,d,\d)$ with a polynomial $p$. Since $d$ appears only as a linear factor next to $h$, the degree of $p$ in $d$ is $n$. 

Moreover, $\int_X h^n=d$, so the integration over $X$ is simply a substitution $h^n=d$, resulting the equation 
$I(n,\baa,\d,d)=dp(n,\baa,\d,d)$, 
where 
\[p(n,\baa,\d,d)=p_n(n,\baa,\d)d^n+\ldots +p_1(n,\baa,\d)d+p_0(n,\baa,\d)\]
is a polynomial in $d$ of degree $n$. 
The goal is to show that $p_n$ dominates the rest of the polynomials, that is, to prove the following

\begin{proposition}\label{polynomial} For $a_i=n^{8(n+1-i)}$ and $\d=\frac{1}{2n^{8n}}$,
\[p_n>0 \text{ and } |p_{n-l}|<3n^{8ln}p_n\]
\end{proposition}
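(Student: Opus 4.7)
The strategy is to decompose each coefficient $p_l(n,\baa,\delta)$ as an explicit iterated residue that tracks the source of the $d$-factors, and then bound the resulting pieces using the sharp geometric hierarchy built into the choice $a_i = n^{8(n+1-i)}$.

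The parameter $d$ enters the integrand of \eqref{main2g} only through the $n$ factors of $A^0(\bz)$ (each contributing at most one $dh$) and through the single term $-n^2\delta|\baa|dh$ in $I_{n,\baa,\delta,d}$. Since $d$ always appears paired with $h$, and the residue fixes the total $h$-degree to equal $n$, extracting the coefficient of $d^{n-l}$ requires filling exactly $n-l$ of these $dh$-slots and leaves no room for any additional $h$. This yields a canonical decomposition $p_{n-l} = \alpha_{n-l} + \beta_{n-l}$, where $\alpha_{n-l}$ collects the contributions with all $n-l$ chosen slots in $A^0$ (so $l$ factors of $A^0$ contribute the non-$dh$ piece $1 + z_{[1\ldots j-1]}/z_j$), and $\beta_{n-l}$ collects the contributions using the slot from $I$ (so $n-l-1$ slots in $A^0$); crucially, $\beta_{n-l}$ always carries the overall factor $n^2\delta|\baa|$ with $\delta = 1/(2n^{8n})$ tiny.

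The first concrete computation is the leading piece $\alpha_n$: all $n$ factors of $A^0$ contribute $dh/z_j$ and all other factors contribute their $h$-free parts, reducing $\alpha_n$ to
\[
\alpha_n = \sires \frac{A^1(\bz)\,\tilde A^2(\bz)\,(a_1z_1+\ldots+a_nz_n)^{n^2}\,\dbz}{(z_1\cdots z_n)^{n+1}},
\]
where $\tilde A^2(\bz)$ is the $h$-free part of $A^2(\bz)$. Expanding via \eqref{omegaexp} in the regime $z_1 \ll \ldots \ll z_n$, the multinomial expansion of the $n^2$-th power dominates the Laurent tails of $A^1$ and $\tilde A^2$: it singles out a distinguished monomial in the $a_i$ which survives as a positive contribution to the coefficient of $(z_1\cdots z_n)^{-1}$, giving a closed-form lower bound of the shape $c\,a_1^{k_1}\cdots a_n^{k_n}$ with $k_i$ combinatorially determined via Lemma \ref{observation}. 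The companion piece $|\beta_n|$ is bounded by $\delta|\baa|$ times a residue of the same shape, hence is smaller by a factor $1/n^{8n}$, so $p_n > 0$.

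For $l\ge 1$ the sum defining $\alpha_{n-l}$ is indexed by subsets $T \subset \{1,\ldots,n\}$ of size $l$ specifying which $A^0$-slots are skipped; for each $T$ the residue has the same structural shape as the one above, but with factors $1 + z_{[1\ldots j-1]}/z_j$ inserted for $j\in T$ and with a compensating shift in the allowed $h$-degree in $A^2$. The key observation is that every resulting monomial, after Laurent expansion in $z_1 \ll \ldots \ll z_n$, is dominated by an $a$-weight monomial $\prod a_i^{k_i}$ strictly below the leading one identified above; since $a_i/a_{i+1} = n^8$, each downward shift of one variable in the hierarchy costs at most a factor $n^8$, and $l$ shifts cost at most $n^{8l}$. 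Summing over the $\binom{n}{l}$ choices of $T$ and the bounded-polynomial-in-$n$ subsidiary expansion choices, one gets $|\alpha_{n-l}| \le 2\,n^{8ln}\,p_n$, and $|\beta_{n-l}|$ is again smaller by the $\delta$ factor, yielding $|p_{n-l}| < 3\,n^{8ln}\,p_n$ as claimed.

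The main obstacle is the last step: cleanly bookkeeping all the residue terms arising from $\alpha_{n-l}$ so that every one is shown to have strictly suboptimal $a$-weight. In particular, one must verify that the extra $h$-powers that must now be absorbed from $A^2$ (when an $A^0$-slot is skipped) cannot conspire with the $A^1$-expansion or with the multinomial of the $n^2$-th power to restore the leading $a$-weight, and that the total combinatorial multiplicity of all such contributions is absorbed by $n^{8ln}$ rather than blowing up in additional powers of $n$ that would break the bound; the exponential hierarchy $a_i/a_{i+1} = n^8$ has been tuned precisely so that the extra polynomial-in-$n$ multiplicities never catch up with the geometric loss.
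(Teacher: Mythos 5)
Your plan for the leading coefficient $p_n$ matches the paper's in spirit: you isolate the $(z_1\cdots z_n)^{-1}$ coefficient coming entirely from $dh$-slots in $A^0$, identify the multinomial coefficient $\binom{n^2}{n,\ldots,n}(a_1\cdots a_n)^n$ as the dominant term, and note that the $\beta$-piece is suppressed by the factor $n^2\delta|\baa|<1$. That part is sound and parallels equations \eqref{an}--\eqref{pntob0}.

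However, there is a genuine gap in your treatment of $p_{n-l}$ for $l\ge 1$, and it is precisely the step you flag as the ``main obstacle.'' When $l$ of the $A^0$-slots are skipped, the residue still requires total $h$-degree $n$, so the missing $l$ copies of $h$ must be supplied from elsewhere --- your claim that this ``leaves no room for any additional $h$'' is incorrect. You propose to absorb them from the Laurent tails of $A^2$, but those only carry harmless $h/z_j$ factors. The dominant source is in fact the factor $(a_1z_1+\ldots+a_nz_n+2|\baa|h)^{n^2-1}(\ldots)$ inside $B(\bz)$: each copy of $h$ pulled from there brings a coefficient of roughly $2|\baa| \approx 2n^{8n}$, so extracting $h^l$ from $B$ produces a factor $(2|\baa|)^l \approx n^{8nl}$. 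This is the \emph{entire} reason the bound in the Proposition is $n^{8ln}$ rather than something of order $n^{8l}$ times a polynomial. Your accounting (``$l$ shifts cost at most $n^{8l}$, summed over $\binom{n}{l}$ choices'') cannot produce $n^{8ln}$; the missing factor of roughly $n^{8(n-1)l}$ is exactly the $|\baa|^l$ you never identify.

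Relatedly, your ``key observation'' that every monomial in $\alpha_{n-l}$ has $a$-weight strictly \emph{below} the leading one is inverted. The dominant term in $p_{n-l}$ is $B_{\bz^{\bi(l)}h^l}A_{\bz^{-\bi(l)-\mathbf{1}}(dh)^{n-l}}$ with $\bi(l)=-e_{n-l+1}-\ldots-e_n$, and it is in general much \emph{larger} than $B_{\mathbf{0}}$: one loses $a_{n-l+1}\cdots a_n = n^{4l(l+1)}$ but gains $(2|\baa|)^l \approx n^{8nl}$, for a net growth of order $n^{8nl-4l(l+1)}$. The bound $|p_{n-l}|<3n^{8ln}p_n$ is a cap on this growth, not a record of shrinkage. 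Without identifying the lattice point $\bi(l)$ and the $(2|\baa|)^l$ factor as the new dominant contribution, and then running the subsidiary estimates (the paper's Lemmas \ref{decomposition}, \ref{lemmaai}, \ref{lemmaaih} and equation \eqref{domterm}) to show the remaining terms are at most half of it, the proof cannot close.
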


Theorem \ref{maintheoremone} is a straightforward consequence of this Proposition, applying the following elementary statement:
\begin{lemma}[Fujiwara bound]\label{estimation}
If $p(d)=p_nd^n+p_{n-1}d^{n-1}+\ldots +p_1d+p_0\in \RR[d]$ satisfies the inequalities 
\[p_n>0;\ \ |p_{n-l}|<D^l |p_n| \text{ for } l=1,\ldots n,\]
then $p(d)>0$ for $d>2D$.  
\end{lemma}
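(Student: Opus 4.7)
The plan is to prove the inequality by a straightforward triangle-inequality bound on the tail of $p(d)$, combined with a geometric series estimate. The key observation is that the hypothesis $|p_{n-l}|<D^l p_n$ gives uniform control over every lower-order coefficient in terms of the leading one, which is exactly what is needed to dominate the polynomial's tail by its leading term once $d$ is large enough.

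More concretely, I would start from the trivial lower bound
\[
p(d) \;\ge\; p_n d^n - \sum_{l=1}^n |p_{n-l}|\,d^{n-l},
\]
valid for $d>0$ since $p_n>0$. Using the hypothesis $|p_{n-l}|<D^l p_n$, this yields
\[
p(d) \;>\; p_n d^n\left(1 - \sum_{l=1}^n \left(\frac{D}{d}\right)^l\right).
\]
Thus it suffices to show that the parenthesised factor is strictly positive when $d>2D$.

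For $d>2D$ we have $D/d < 1/2$, so bounding the finite sum by the corresponding infinite geometric series gives
\[
\sum_{l=1}^n \left(\frac{D}{d}\right)^l \;<\; \sum_{l=1}^\infty \left(\frac{1}{2}\right)^l \;=\; 1.
\]
Therefore $1 - \sum_{l=1}^n (D/d)^l > 0$, and since $p_n>0$ and $d^n>0$, we conclude $p(d)>0$, as required.

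There is essentially no obstacle here: the statement is a standard Fujiwara-type root bound and the only ingredient is the geometric series comparison. The only minor care needed is to ensure the strict inequality survives (which it does, since $D/d<1/2$ strictly), and to check the argument works uniformly in $n$ (which it does, because the geometric series converges to $1$ independently of $n$).
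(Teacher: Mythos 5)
Your proof is correct, and the argument is the standard one for a Fujiwara-type bound: factor out the leading term, compare the tail to a geometric series, and observe that $d>2D$ forces the ratio below $1/2$. The paper states the lemma as an elementary fact without supplying a proof, so there is no paper argument to compare against; your write-up fills that gap cleanly.
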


\subsection{Estimation of the leading coefficent}\label{subsec:leadingcoeff}
The next goal is to compute the leading coefficient $p_n(n,\baa,\d)$. For $\bi=(i_1,\ldots, i_n) \in \ZZ^n$ let $\Sigma \bi=i_1+\ldots +i_n$ denote the sum of its coordinates. From \eqref{intnumber}
\begin{equation}\label{an}
p_n=\sum_{\Sigma \bi=0}B_{\bz^{\bi}}A_{\bz^{-\bi-\mathbf{1}}(dh)^n}-
n^2\d|\baa|\sum_{\Sigma \bi=-1}B_{\bz^{\bi}(dh)}A_{\bz^{-\bi-\mathbf{1}}(dh)^{n-1}}
\end{equation}
where $\mathbf{1}=(1,\ldots ,1)$. Note that -- according to Lemma \ref{observation} -- some terms on the r.h.s are $0$, since we have not made any restrictions on the relation of $\bi$ to $\Lambda^+$. 

There is a dominant term on the r.h.s, corresponding to $\bi=(0,\ldots,0)$ in the first sum:
\begin{equation}\label{b0}
B_{\mathbf{0}}=B_{\bz^\mathbf{0}}A_{\frac{(dh)^n}{\bz^{\mathbf{1}}}}=(a_1\cdots  a_n)^n {n^2 \choose n,\ldots ,n}
\end{equation}
Here ${m \choose m_1, \ldots, m_s}=\frac{m!}{m_1!\ldots m_s!}$ denotes the multinomial coefficient equal to the coefficient of $x_1^{m_1}\ldots x_s^{m_s}$ in $(x_1+\ldots +x_s)^m$.
We show that the absolute sum of the remaining terms is less than this dominant term, implying a lower bound for $p_n$ when $\d,\baa$ as in Theorem \ref{maintheoremone}. 
According to the choice $a_i=n^{8(n+1-i)}$ we have for $\Sigma \bi=0$  
\begin{equation}\label{bbz}
B_{\bz^{\bi}}={n^2 \choose i_1+n,\ldots ,i_n+n}a_1^{i_1+n}\ldots a_n^{i_n+n}<a_1^{i_1}\ldots a_n^{i_n}B_{\mathbf{0}}=n^{8\deff(\bi)}B_{\mathbf{0}}
\end{equation}
On the other hand $A_{\bz^{-\bi-\mathbf{1}}(dh)^n}=0$ unless $\deff(\bi)\le 0$ in which case
\begin{equation}\label{upperbound}
A_{\bz^{-\bi-\mathbf{1}}(dh)^n}=\sum_{\bi_1+\bi_2=
-\bi}A^1_{\bi_1}A^2_{\bi_2}<2^{-\deff(\bi)}n^{-3\deff(\bi)}
\end{equation}
holds according to the following two lemmas which will be repeatedly used:
\begin{lemma}\label{decomposition} We have the following estimations: 
\begin{enumerate}
\item  $\sharp\{\bi \in \Lambda^+: \Sigma \bi=0, \deff(\bi)=i\} \le (n-1)^{i}$.
\item Let $\bi \in \Lambda^+$, $\Sigma \bi=0$ be fixed and let $s$ be a positive integer. Then
\[\sharp\{(\bi_1,\ldots,\bi_s)\in (\Lambda^+)^s:\bi_1+\ldots +\bi_s=\bi\}\le s^{\deff(\bi)}.\]
\end{enumerate}
\end{lemma}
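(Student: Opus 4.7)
The plan is to reduce both counts to elementary multiset-versus-sequence bounds by working in the basis of simple roots $\alpha_k := e_k - e_{k+1}$ for $k = 1, \ldots, n-1$. First I would observe that every generator of $\Lambda^+$ has nonpositive coordinate sum: the generators $e_i - e_j$ contribute $0$ while the generators $-e_k$ contribute $-1$. Hence $\Sigma \bi \le 0$ for every $\bi \in \Lambda^+$, and imposing $\Sigma \bi = 0$ forces the coefficients of all $-e_k$ in any representation of $\bi$ to vanish. Combined with the identity $e_i - e_j = \alpha_i + \alpha_{i+1} + \ldots + \alpha_{j-1}$ and the linear independence of $\alpha_1, \ldots, \alpha_{n-1}$, this yields the identification
\[\{\bi \in \Lambda^+ : \Sigma \bi = 0\} \;=\; \bigoplus_{k=1}^{n-1}\ZZ^{\ge 0}\alpha_k,\]
with the representation $\bi = \sum_k c_k \alpha_k$ unique. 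A direct calculation shows $\deff(\alpha_k) = (n+1-k)-(n-k) = 1$ for every $k$, so that $\deff(\bi) = \sum_k c_k$ in this unique decomposition.

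For part (1), the set $\{\bi : \Sigma \bi = 0, \ \deff(\bi) = D\}$ is then in bijection with tuples $(c_1, \ldots, c_{n-1}) \in (\ZZ^{\ge 0})^{n-1}$ with $\sum c_k = D$, i.e.\ with size-$D$ multisets from an $(n-1)$-element set. Their number is $\binom{D+n-2}{D}$, which is bounded above by the number $(n-1)^D$ of length-$D$ sequences over $n-1$ symbols, since every nonempty multiset of size $D$ has at least one sequence realisation.

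For part (2), fix $\bi$ with $\Sigma \bi = 0$, $\deff(\bi) = D$, and suppose $\bi_1 + \ldots + \bi_s = \bi$ with each $\bi_t \in \Lambda^+$. Summing the coordinate-sum functional and using $\Sigma \bi_t \le 0$ for each $t$, every summand must in fact satisfy $\Sigma \bi_t = 0$, and hence lies in the simple-root cone by the first step. Writing $\bi_t = \sum_k c_{k,t}\alpha_k$ and $\bi = \sum_k c_k \alpha_k$, the decompositions are in bijection with $(n-1) \times s$ non-negative integer matrices with prescribed row sums $c_1, \ldots, c_{n-1}$. The count factors row-by-row as $\prod_k \binom{c_k+s-1}{c_k}$, which I would bound term-by-term by $\prod_k s^{c_k} = s^{D}$ using the same multiset-versus-sequence inequality.

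The main (and only real) obstacle is the setup: recognising that the constraint $\Sigma \bi = 0$ eliminates the ``negative'' generators of $\Lambda^+$ and collapses the overdetermined set $\{e_i - e_j\}_{i<j}$ to the basis of simple roots, on which $\deff$ restricts to the total-coefficient functional. Once this is in place, both statements reduce to a one-line combinatorial estimate.
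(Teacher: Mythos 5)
Your proof is correct and takes essentially the same approach as the paper's: both hinge on observing that the constraint $\Sigma\bi=0$ forces $\bi$ to lie in the nonnegative span of the simple roots $\alpha_k = e_k - e_{k+1}$, each of which has defect $1$, so that $\deff(\bi)$ counts the total number of simple-root summands; both parts then reduce to bounding multiset/composition counts by the number of free assignments. You make explicit the steps the paper's terse proof leaves implicit (the elimination of the $-e_k$ generators, the verification $\deff(\alpha_k)=1$, and the exact count $\prod_k\binom{c_k+s-1}{c_k}$ in part (2) before bounding), but the underlying argument is the same.
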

\begin{proof}
Let $\bi=\sum_{j=1}^{\deff(\bi)}(e_{l_j}-e_{l_j+1})$ be the unique decomposition of $\bi$ into the sum of positive simple roots. We have $n-1$ positive simple roots which gives the first inequality. For the second part note that each summand can be put into any of the $s$ multiindices $\bi_1,\ldots, \bi_s$ which gives us the second inequality.
\end{proof}

\begin{lemma}\label{lemmaai}
Let $\Sigma \bi=0$. Then $A^1_{\bi},A^2_{\bi}<n^{3\deff(\bi)}$.
\end{lemma}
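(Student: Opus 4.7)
The plan is to expand $A^1$ and $A^2$ term-by-term in the iterated Laurent regime $1\ll|z_1|\ll\ldots\ll|z_n|$, identify the monomials $\bz^{\bi}$ with $\Sigma\bi=0$ contributing to $A^1_{\bi}$ and $A^2_{\bi}$ as positive-root decompositions in $\Lambda^+$, and combine Lemma \ref{decomposition}(2) with elementary binomial/multinomial bounds to extract the estimate $n^{3\deff(\bi)}$.

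For $A^1$, I will first rewrite each factor using the geometric series
\[\frac{z_{[t_1\ldots t_2]}}{-z_{t_1}+z_{[t_1+1\ldots t_2]}}=1+2\sum_{k\ge 1}\left(\frac{z_{t_1}}{z_{[t_1+1\ldots t_2]}}\right)^k,\]
and expand each $(z_{t_1+1}+\ldots+z_{t_2})^{-k}$ iteratively in the order $z_{t_1+1}\ll\ldots\ll z_{t_2}$. This yields monomials $z_{t_1}^k\prod_{r=t_1+1}^{t_2}z_r^{-m_r}$ with $\sum_r m_r=k$, whose coefficients are (signed) products of $\binom{k-1+j}{j}$-type binomials and multinomials $\binom{j}{e_{t_1+1},\ldots,e_{t_2-1}}$. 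Each such monomial equals $\sum_r m_r(e_{t_1}-e_r)\in\Lambda^+$ with defect $\sum_r m_r(r-t_1)\ge k$. Multiplying over the $\binom{n}{2}$ pairs, a monomial $\bz^{\bi}$ with $\Sigma\bi=0$ forces $\bi\in\Lambda^+$, and Lemma \ref{decomposition}(2) applied with $s=\binom{n}{2}<n^2$ bounds the number of decompositions of $\bi$ into positive-root contributions from the different pairs by $n^{2\deff(\bi)}$. The binomial/multinomial weights together with the factors of $2$ from each active pair contribute at most $n^{\deff(\bi)}$, so $|A^1_{\bi}|<n^{3\deff(\bi)}$.

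For $A^2$, I will telescope the inner geometric series to $A^2(\bz)=\prod_{j=1}^n z_j^{n+2}/(z_{[1\ldots j]}+h)^{n+2}$. Since the statement concerns the coefficient of pure $\bz^{\bi}$ (no $h$), it suffices to set $h=0$ and study
\[A^2\bigl|_{h=0}=\prod_{j=1}^n\bigl(1+z_{[1\ldots j-1]}/z_j\bigr)^{-(n+2)}.\]
Each factor expands via the generalised binomial theorem into monomials $\prod_{r<j}z_r^{m_r}/z_j^{\sum m_r}$, representing $\sum_{r<j}m_r(e_r-e_j)\in\Lambda^+$ with defect $\sum_r m_r(j-r)$, and with coefficient bounded in absolute value by $\binom{n+1+\sum m_r}{\sum m_r}\le n^{\sum m_r}$. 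Counting the ways to distribute a positive-root decomposition of $\bi$ across the $n$ factors via Lemma \ref{decomposition}(2) with $s=n$ gives at most $n^{\deff(\bi)}$ terms, so the total bound is again $|A^2_{\bi}|<n^{3\deff(\bi)}$, with considerable slack.

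The main obstacle I anticipate is the bookkeeping in the $A^1$ analysis: the signed binomial and multinomial coefficients from the iterated expansion of $(z_{t_1+1}+\ldots+z_{t_2})^{-k}$ must be controlled uniformly in $t_1,t_2,k$ and then combined with the decomposition count so that the resulting power is $n^{3\deff(\bi)}$ rather than a looser $n^{c\deff(\bi)}$. Sign cancellations in the Laurent expansion are irrelevant for an upper bound, so I plan to work with absolute values throughout and carefully distribute the three factors of $n$ per simple root of $\bi$ between the three sources: the pair $(t_1,t_2)$ that provides that simple root, the multiplicity assignment within the pair, and the combinatorial coefficient from the expansion.
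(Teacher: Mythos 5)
Your strategy is the paper's: expand in the iterated Laurent regime, read off each contributing monomial as a positive-root decomposition with an attached defect, count decompositions with Lemma \ref{decomposition}(2), and bound the coefficients by multinomial estimates. For $A^1$ the two arguments are essentially the same; however, your grouping $1+2\sum_{k\ge 1}(z_{t_1}/z_{[t_1+1\ldots t_2]})^k$ and the subsequent expansion of $z_{[t_1+1\ldots t_2]}^{-k}$ introduce negative-binomial factors $\binom{k-1+j}{j}$ that you flag but do not actually control. The paper rewrites the same factor as $1+\frac{2z_{t_1}}{z_{t_2}}\cdot\frac{1}{1-(z_{t_1}-z_{[t_1+1\ldots t_2-1]})/z_{t_2}}$, a power-one geometric series, so each contribution is a single multinomial $\mathrm{comb}(\bi^+(t_1,t_2))<\frac{1}{2}\,n^{\Sigma\bi^+(t_1,t_2)}\le\frac{1}{2}\,n^{\deff(\bi(t_1,t_2))}$ and the $\binom{k-1+j}{j}$ factors never arise; this is the device you would want to borrow if the bookkeeping you anticipate turns out to be tight.

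Your $A^2$ argument is a genuine departure and, as written, contains two concrete errors. The coefficient of $\prod_{r<j}z_r^{m_r}z_j^{-k}$ in the telescoped factor $(1+z_{[1\ldots j-1]}/z_j)^{-(n+2)}$ is $\pm\binom{n+1+k}{k}\binom{k}{m_1,\ldots,m_{j-1}}$; you dropped the multinomial coming from expanding $(z_{[1\ldots j-1]})^k$, and the asserted inequality $\binom{n+1+k}{k}\le n^k$ is false already at $k=1$, where it reads $n+2\le n$. After correcting both, the natural per-factor bound $\binom{n+1+k}{k}\binom{k}{m_1,\ldots,m_{j-1}}\le(n+2)^k(n-1)^k\le\bigl((n+2)(n-1)\bigr)^{\deff(\bi(j))}$ has base $n^2+n-2>n^2$, so the decomposition count with $s=n$ as you propose yields $\bigl(n(n^2+n-2)\bigr)^{\deff(\bi)}>n^{3\deff(\bi)}$ and the estimate does not close; you must observe that the $j=1$ factor is trivial and take $s=n-1$, whence $\bigl((n-1)^2(n+2)\bigr)^{\deff(\bi)}=(n^3-3n+2)^{\deff(\bi)}<n^{3\deff(\bi)}$. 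The paper sidesteps all of this by not telescoping: it keeps the $(n+2)$-th power as a product of $n+2$ copies, draws one monomial $\bs_m(j)$ from each copy, applies Lemma \ref{decomposition}(2) over the $(n-1)(n+1)$ active sub-factors, and reuses the clean multinomial bound $\mathrm{comb}(\bs_m^+(j))<\frac{1}{2}\,n^{\deff(\bs_m(j))}$, so the constants close with room to spare.
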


\begin{proof} Denoting by $\bz^{\bi(t_1,t_2)}$ the monomial we pick from the term corresponding to $t_1,t_2$ we get by definition
\begin{multline}\nonumber
|A^1_{\bi}|=\sum_{\sum_{t_1,t_2}\bi(t_1,t_2)=\bi}\prod_{1\le t_1<t_2 \le n} \coeff_{\bz^{\bi(t_1,t_2)}}\left(1+\frac{2z_{t_1}}{z_{t_2}}
\left(1+\frac{z_{t_1}-z_{[t_1+1-\ldots t_2-1]}}{z_{t_2}}-\ldots  \right)\right)\le\\ 
<\sum_{\sum_{t_1,t_2}\bi(t_1,t_2)=\bi}
\prod_{t_1,t_2}2\cdot \mathrm{comb}(\bi^+(t_1,t_2)),
\end{multline}
where $\bi(t_1,t_2)=\bi^+(t_1,t_2)-m_{t_1,t_2}e_{t_2}$ for unique $\bi^+ \in (\ZZ^{\ge 0})^n$, $m_{t_1,t_2}\in \ZZ^{\ge 0}$, and for $\bj=(j_1,\ldots, j_n)\in (\ZZ^{\ge 0})^{n}$ we define $\mathrm{comb}(\bj)={j_1+\ldots +j_n \choose j_1,\ldots ,j_n}$. Note that $\mathrm{comb}(\bj)$ is a summand in $n^{\Sigma \bj}=(1+\ldots +1)^{\Sigma \bj}$ and therefore $\mathrm{comb}(\bj)<n^{\Sigma \bj}/2$. Hence for $\bi^+(t_1,t_2)\neq 0$ we have
\[\mathrm{comb}(\bi^+(t_1,t_2))<\frac{1}{2}n^{\Sigma \bi^+(t_1,t_2)}\le \frac{1}{2}n^{\deff(\bi(t_1,t_2))} \text{ so } \prod_{t_1,t_2}2 \cdot \mathrm{comb}(\bi^+(t_1,t_2))<n^{\deff(\bi)}.\]
On the other hand Lemma \ref{decomposition} with $s={n \choose 2}$ gives
\[\sum_{\bi=\sum_{t_1,t_2}\bi(t_1,t_2)}1 < {n \choose 2}^{\deff(\bi)},\]
and therefore 
\[|A^1_{\bi}|<n^{\deff(\bi)}{n \choose 2}^{\deff(\bi)}<n^{3\deff(\bi)}.\]
Similarly, if we denote by $\bz^{\bi(j)}h^{-\Sigma \bi(j)}$ the term which we pick from the $j$th term of $A^2$ then 
\begin{multline}\nonumber
|A^2_{\bi}|<\sum_{\sum_{j=1}^n\bi(j)=\bi}\prod_{j} \coeff_{\bz^{\bi(j)}}\left(1-\frac{z_1+\ldots z_{j-1}+h}{z_j}+\left(\frac{z_1+\ldots z_{j-1}+h}{z_j} \right)^2-\ldots  \right)^{n+2}<\\ <\sum_{\sum_{j=1}^n\bi(j)=\bi}\sum_{\bs_1(j)+\ldots +\bs_{n+2}(j)=\bi(j)}\prod_{\substack{1\le j\le n,\\ 1\le m\le n+2}}\mathrm{comb}(\bs_m^+(j)).
\end{multline}
Since $\Sigma \bi=0$, we don't have $h$ in the numerator and therefore $\bs_1(j)=\mathbf{0}$ for any $j$. Lemma \ref{decomposition} gives again
\[\sum_{\sum_{j=1}^n\bi(j)=\bi}\sum_{\bs_1(j)+\ldots +\bs_{n+2}(j)=\bi(j)}1<((n-1)(n+1))^{\deff(\bi)},\]
whereas for $\bs_m^+(j)\neq 0$ we have $\mathrm{comb}(\bs_m^+(j))<\frac{1}{2}n^{\deff(\bs_m(j))}$
as before, giving us
\[|A_\bi^2|<((n-1)n(n+1))^{\deff(\bi)}<n^{3\deff(\bi)},\] 
which proves Lemma \ref{lemmaai}.
\end{proof}  

Substituting inequalities \eqref{bbz} and \eqref{upperbound} into \eqref{an} and using Lemma \ref{decomposition} we get
\begin{multline}\label{firstterm}
\sum_{\substack{\bi \neq 0 \\ \Sigma \bi=0}}B_{\bz^{\bi}}A_{\bz^{-\bi-\mathbf{1}}(dh)^n}<
\sum_{i=1}^{n^2}\sum_{\substack{\bi \neq 0,\Sigma \bi=0,\bi \in \Lambda^+ \\ \deff(\bi)=i}}\left(\frac{2}{n^5}\right)^iB_{\mathbf{0}}
=\sum_{i=1}^{n^2}\left(\frac{2}{n^5}\right)^iB_{\mathbf{0}}\sum_{\substack{\bi \neq 0,\Sigma \bi=0, \bi\in \Lambda^+  \\ \deff(\bi)=i}}1<\\
\sum_{i=1}^{n^2}\left(\frac{2}{n^5}\right)^in^i
B_{\mathbf{0}}<\frac{1}{4}B_{\mathbf{0}}
\end{multline}

We can handle the second sum of the r.h.s in \eqref{an} in a similar fashion. For $\Sigma \bi=-1$, and $e_j=(0,\ldots ,1^j,\ldots ,0)$ the $j$th coordinate vector we have
\begin{equation*}
A_{\bz^{-\bi-\mathbf{1}}(dh)^{n-1}}=\sum_{j_2=1}^n\sum_{j_1\le j_2}\sum_{\substack{\bi_1+\bi_2=-\bi-e_{j_1} \\ \bi_1,\bi_2 \in \Lambda^+}}A^1_{\bi_1}A^2_{\bi_2}
\end{equation*}
holds because we have to sum over all terms coming from $A^0$ in \eqref{main2g}. So applying Lemma \ref{decomposition} and Lemma \ref{lemmaai} again, we get
\begin{equation}\nonumber 
|A_{\bz^{-\bi-\mathbf{1}}(dh)^{n-1}}|<\sum_{j_2=1}^n
\sum_{\substack{j_1\le j_2 \\ -\bi-e_{j_1}\in \Lambda^+}}2^{-\deff(\bi+e_{j_1})}n^{-3\deff(\bi+e_{j_1})}
<
\sum_{\substack{1\le j \le n  \\ -\bi-e_{j}\in \Lambda^+}}2^{-(\deff(\bi)+n+1-j)}n^{1-3(\deff(\bi)+n+1-j)}.
\end{equation}
Then, similarly to \eqref{bbz}, for $\Sigma \bi=-1$  
\begin{equation}\label{bbbz}
B_{\bz^{\bi}(dh)}={n^2-1 \choose i_1+n,\ldots ,i_n+n}a_1^{i_1+n}\ldots a_n^{i_n+n}<n^{8\deff(\bi)}B_{\mathbf{0}}
\end{equation}
and therefore by the first part of Lemma \ref{decomposition}  we get
\begin{eqnarray}\label{secondterm}
|\sum_{\Sigma \bi=-1}B_{\bz^{\bi}(dh)}A_{\bz^{-\bi-\mathbf{1}}(dh)^{n-1}}|<\sum_{\Sigma \bi=-1}\sum_{\substack{1\le j \le n \\ \bi+e_j \in \Lambda^-}} 2^{-(\deff(\bi)+n+1-j)}n^{5\deff(\bi)-3n+3j-2} B_\mathbf{0}<\\ \nonumber
<
\frac{1}{n}\sum_{i=1}^{\infty}\sum_{\substack{\bi \in \Lambda^-  \\ \deff(\bi)=-i, \Sigma \bi=0}}\left(\frac{n^5}{2}\right)^{-i}B_{\mathbf{0}}<\frac{1}{n}\sum_{i=1}^{\infty}\left(\frac{2}{n^4}\right)^{i}B_{\mathbf{0}}<\frac{1}{4n^2}B_{\mathbf{0}}.
\end{eqnarray} 
Since $\d=\frac{1}{2n^{8n}}$ and $a_i=n^{8(n+1-i)}$, we have $\d |\baa|<1$ so substituting  \eqref{firstterm} and \eqref{secondterm} into \eqref{an} we get
\begin{equation}\label{pntob0}
p_n>\frac{1}{2}B_{\mathbf{0}}>0
\end{equation}
proving the first statement of Proposition \ref{polynomial}.

\subsection{Estimation of the coefficients $p_{n-l}(n,\baa,\d)$}\label{subsec:othercoeffs}

In this subsection we study the coefficients $p_{n-l}(n,\baa,\d)=\coeff_{d^{n+1-l}}I(n,\baa,\d,d)$ for $1 \le l \le n$ to prove the second part of Proposition \ref{polynomial}. From \eqref{main2g}
\begin{equation}\label{coeffgeneral}
p_{n-l}(n,\baa,\d)=\sum_{s=0}^l \sum_{\Sigma \bi=-s} B_{\bz^{\bi}h^s}A_{\bz^{-\bi-\mathbf{1}}h^{l-s}(dh)^{n-l}}-n^2\d |\baa|\sum_{s=1}^{l+1} \sum_{\Sigma \bi=-s-1} B_{\bz^{\bi}h^s(dh)}A_{\bz^{-\bi-\mathbf{1}}h^{l-s}(dh)^{n-l-1}}.
\end{equation}

\begin{lemma}\label{lemmaotherterms} Let $a_i=n^{8(n+1-i)}$, $\d=\frac{1}{2n^{8n}}$. The dominant term in \eqref{coeffgeneral} is $B_{\bz^{\bi(l)}h^l}A_{\bz^{-\bi-\mathbf{1}}(dh)^{n-l}}$ where
\begin{equation}\label{defsl}
\bi(l)=(\underbrace{0,\ldots, 0}_{n-l},\underbrace{-1,\ldots, -1}_{l})=-e_{n-l+1}-\ldots-e_n,
\end{equation}
that is, the sum of the other terms in \eqref{coeffgeneral} is smaller than half of this dominant term and hence
\[|p_{n-l}(n,\baa,\delta)|<\frac{3}{2}|B_{\bi(l)h^l}A_{\bz^{-\bi(l)-\mathbf{1}}(dh)^{n-l}}|.\]
\end{lemma}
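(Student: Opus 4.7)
The proof parallels the argument for the leading coefficient $p_n$ in Section \ref{subsec:leadingcoeff}. The plan is to isolate the single summand $B_{\bz^{\bi(l)}h^l}A_{\bz^{-\bi(l)-\mathbf{1}}(dh)^{n-l}}$ in \eqref{coeffgeneral} as dominant, and to majorise the absolute sum of the remaining terms by at most half of it.

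First, I would establish an explicit lower bound for $|B_{\bz^{\bi(l)}h^l}|$, analogous to \eqref{b0}. The monomial $\bz^{\bi(l)}h^l$ is extracted from the numerator of $B$ by choosing $l$ of the $n^2-1$ slots in the first linear factor to contribute $h$ (yielding $(2|\baa|)^l$) and filling the remaining slots with $z_j$'s so that $z_1,\dots,z_{n-l}$ each appear to the power $n$ and $z_{n-l+1},\dots,z_n$ each to the power $n-1$. The matching multinomial coefficient times the $a$-product then yields a clean baseline of order $(a_1\cdots a_{n-l})^n(a_{n-l+1}\cdots a_n)^{n-1}|\baa|^l$ up to fixed combinatorial factors. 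Simultaneously, $A_{\bz^{-\bi(l)-\mathbf{1}}(dh)^{n-l}}$ receives a contribution of at least $1$ from the ansatz that picks $dh/z_j$ from the $j$-th factor of $A^0$ for $j=1,\dots,n-l$ and the constant term from every other factor of $A^0$, $A^1$, $A^2$.

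Next, I would compare the generic summand $B_{\bz^\bi h^s}A_{\bz^{-\bi-\mathbf{1}}h^{l-s}(dh)^{n-l}}$ to the dominant one. Since $a_j=n^{8(n+1-j)}$, the ratio of $a$-products contributes a factor $n^{8(\deff(\bi)-\deff(\bi(l)))}$, and the multinomial ratios are controlled polynomially following \eqref{bbz}. For the $A$-factor, I would extend Lemma \ref{lemmaai} to allow $l-s$ additional $h$-extractions from $A^2$: each such extraction forces one of the $(n+2)$-nd power terms to pull out an $h/z_j$, costing only a fixed polynomial-in-$n$ factor per extraction while shifting the effective defect by $1$. The support condition from Lemma \ref{observation}, sharpened by the explicit shape of monomials in $A$ as products of $z_i/z_j$ (with $i<j$), of $h/z_j$ and of $dh/z_j$, enforces $\Sigma\bi=-s$ and bounds the number of admissible $\bi$'s at each defect via Lemma \ref{decomposition}.

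The third step is geometric summation, mirroring \eqref{firstterm}. For each fixed $s$, the sum over admissible $\bi$ with $-\bi-\mathbf{1}\in\Lambda^+$ and $\Sigma\bi=-s$ is majorised by a convergent geometric series in $(2/n^5)^{\deff(\bi)-\deff(\bi(l))+(l-s)}$, whose total by Lemma \ref{decomposition} is $O(n^{-1})$ times the dominant term. Summing over $s\in\{0,\dots,l\}$ and using $|\baa|^{s-l}\le 1$ for $s<l$ yields a total bound of at most $\tfrac14$ of the dominant term. The second sum in \eqref{coeffgeneral} carries the prefactor $n^2\delta|\baa|$ with $\delta|\baa|<1$, and the same argument as in \eqref{secondterm} produces a bound smaller than $\tfrac{1}{4n^2}$ of the dominant term. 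A triangle inequality then yields the claimed $|p_{n-l}|<\tfrac32|B_{\bz^{\bi(l)}h^l}A_{\bz^{-\bi(l)-\mathbf{1}}(dh)^{n-l}}|$.

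The principal obstacle is the correct identification of the dominant term: one must verify that the trade-off between concentrating $\bi$ towards earlier coordinates (favoured by $a_1\gg\cdots\gg a_n$), the support constraint suppressing large-defect $\bi$, and the redistribution of $h$-factors between $B$ (as $(2|\baa|)^s$) and $A$ (as an $h^{l-s}$ extracted from $A^2$) all balance precisely at $(\bi,s)=(\bi(l),l)$. A careful fix-$s$-first analysis, maximising over admissible $\bi$ and then optimising over $s$, is the heart of the argument; once this is in place, the remainder reduces to the geometric bookkeeping already developed in Section \ref{subsec:leadingcoeff}.
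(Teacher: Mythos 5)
Your proposal follows essentially the same approach as the paper: you correctly identify the dominant term as $(\bi,s)=(\bi(l),l)$, you extend Lemma \ref{lemmaai} to account for $h$-extractions from $A^2$ (which is exactly the paper's Lemma \ref{lemmaaih}), you compare a generic summand $B_{\bz^\bi h^s}A_{\bz^{-\bi-\mathbf{1}}h^{l-s}(dh)^{n-l}}$ to the dominant one via defect-weighted ratios, and you close with a geometric-series summation over admissible $\bi$ and $s$ mirroring \eqref{firstterm} and \eqref{secondterm}. The bookkeeping giving $\tfrac14+\tfrac14$ of the dominant term from the two sums and hence the final $\tfrac32$ bound matches the paper's \eqref{one}, \eqref{two} and the conclusion.
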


We devote the rest of this section to the proof of this Lemma. We start with studying terms of the first sum in \eqref{coeffgeneral}. For $\Sigma \bi=-s$ 
\begin{equation}\label{bzihs}
B_{\bz^{\bi}h^s}=\left({n^2 \choose s,i_1+n,\ldots, i_n+n}+\left(\frac{S_{n,\d}}{2}-1\right){n^2-1 \choose s-1,i_1+n,\ldots, i_n+n}\right)(2|\baa|)^s \prod_{t=1}^na_t^{i_t+n}
\end{equation}
and therefore
\begin{equation}\label{bhs}
|B_{\bz^{\bi}h^s}|<2n^2{n^2 \choose s,i_1+n,\ldots, i_n+n}(2|\baa|)^s a_1^{i_1+n} \ldots a_n^{i_n+n}.
\end{equation}
Note that many of the terms in \eqref{coeffgeneral} vanish, because $A_{\bz^{-\bi-\mathbf{1}}h^{l-s}(dh)^{n-l}}=0$ unless $-\bi-\mathbf{1}\in \Lambda^+$.

Using \eqref{bhs} and the closed form for $B_{\mathbf{0}}$ in \eqref{b0} we can estimate from above this dominant term as 
\begin{multline}\label{domterm}
|B_{\bz^{\bi(l)}h^l}\underbrace{A_{\bz^{-\bi(l)-\mathbf{1}}(dh)^{n-l}}}_{=1}|<2n^2{n^2 \choose l,\underbrace{n-1,\ldots,n-1}_{l}, \underbrace{n, \ldots, n}_{n-l}}(2|\baa|)^l a_1^{n} \ldots a_{n-l}^n a_{n-l+1}^{n-1} \ldots a_n^{n-1}\\ 
<2n^2(2|\baa|)^ln^{-4l(l+1)}B_{\mathbf{0}} < n^{8ln}B_{\mathbf{0}}.
\end{multline}
When $\bi \neq \bi(l)$ the right hand side of \eqref{bhs} can be estimated using the trivial inequality between multinomial coefficients:
\begin{equation}\label{bzihs2}
|B_{\bz^{\bi}h^s}|<n^{8\deff(\bi-\bi(l))}(2|\baa|)^{s-l}B_{z^{\bi(l)}h^l}.
\end{equation}
Since $A_{\bz^{-\bi-\mathbf{1}}h^{l-s}(dh)^{n-l}}=0$ if $-\bi-\mathbf{1} \notin \Lambda^+$, for the non vanishing terms $-\bi-\mathbf{1} \in \Lambda^+$ must hold and therefore $\deff(\bi(l)-\bi)=\deff(e_1+\ldots +e_{n-l}-\bi-\mathbf{1})\ge 0$.
On the other hand, by \eqref{main2g} for $\Sigma \bi=-s$   
\begin{equation}
A_{\bz^{-\bi-\mathbf{1}}h^{l-s}(dh)^{n-l}}=\sum_{1\le j_1<\ldots <j_l \le n}\sum_{m_1\le j_1, \ldots ,m_l\le j_l}\sum_{\substack{\bi_1+\bi_2=-\bi -e_{m_1}-\ldots -e_{m_l} \\ \bi_1,\bi_2 \in \Lambda^+}}A^1_{\bz^{\bi_1}}A^2_{\bz^{\bi_2}h^{l-s}}
\end{equation}
where in this summation we pick $\frac{z_{m_i}}{z_{j_i}}$ from the $j_i$th term of $A^0$, and $\frac{dh}{z_s}$ from the $s$th term if $s \notin \{j_1,\ldots, j_l\}$. Note that $\Sigma \bi_1=0$ and $\Sigma \bi_2=s-l$, otherwise the corresponding coefficients are zero. 
\begin{lemma}\label{lemmaaih}
Let $\Sigma \bi=-s$. Then 
\[|A^2_{\bz^\bi h^s}|<n^{3\deff(\bi)+s}.
\]
\end{lemma}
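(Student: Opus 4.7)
The plan is to mimic the proof of the bound $|A^2_\bi| < n^{3\deff(\bi)}$ from Lemma \ref{lemmaai} while tracking the extra $h$-power $s$. I would start by factoring
\[A^2 = \prod_{j=1}^n \prod_{m=1}^{n+2} \sigma_{j,m}, \qquad \sigma_{j,m} = \sum_{k\ge 0}(-1)^k\left(\frac{z_{[1\ldots j-1]}+h}{z_j}\right)^k,\]
and from the $(j,m)$-th factor picking a monomial $\bz^{\bs_m^+(j) - k_m(j)e_j}\, h^{t_m(j)}$ where $\bs_m^+(j) \in (\ZZ^{\ge 0})^n$ is supported on $\{1,\ldots,j-1\}$, $t_m(j) \in \ZZ^{\ge 0}$, and $k_m(j) = \Sigma\bs_m^+(j)+t_m(j)$. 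The absolute value of its coefficient equals the multinomial $\binom{k_m(j)}{(\bs_m^+(j))_1,\ldots,(\bs_m^+(j))_{j-1},t_m(j)}$, and the admissible decompositions satisfy $\sum_{j,m}(\bs_m^+(j)-k_m(j) e_j) = \bi$ and $\sum_{j,m} t_m(j) = s$.

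The strategy then is to reduce to the $h$-free case handled in Lemma \ref{lemmaai}. I would introduce the lifted vector $\tilde{\bi} := \bi + \sum_j s(j)\cdot e_j$ with $s(j) = \sum_m t_m(j)$, which lies in $\Lambda^+$, has $\Sigma \tilde{\bi} = 0$, and satisfies the defect identity $\deff(\tilde{\bi}) = \deff(\bi) + \sum_j(n+1-j)\,s(j)$. With $\tilde{\bi}$ fixed, the sum over $\bs_m^+(j)$-decompositions reproduces the $h$-free situation treated in Lemma \ref{lemmaai} and is bounded by $n^{3\deff(\tilde{\bi})}$ after invoking Lemma \ref{decomposition}. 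To separate the role of the $h$-placements, I would split the multinomial as
\[\binom{k_m(j)}{(\bs_m^+(j))_1,\ldots,(\bs_m^+(j))_{j-1},t_m(j)} = \binom{k_m(j)}{t_m(j)}\binom{k_m(j)-t_m(j)}{(\bs_m^+(j))_1,\ldots,(\bs_m^+(j))_{j-1}}\]
and bound the two pieces independently, extracting an overall factor of $n^s$ from the products of $\binom{k_m(j)}{t_m(j)} \le n^{t_m(j)}$.

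The main obstacle will be the final bookkeeping: naively combining $n^{3\deff(\tilde{\bi})}$ with $\deff(\tilde{\bi}) \le \deff(\bi) + ns$ would produce a factor of $n^{3ns}$, which is much too weak. The key observation is that each $h$-contribution pairs with its $1/z_j$ companion and therefore costs only a single factor of $n$ in the multinomial bound, rather than the $n^{n+1-j}$ that would be picked up by fully absorbing the $h$'s into the defect via the lift to $\tilde{\bi}$. Formalising this pairing so that the $h$-exponent contributes $+s$ in the final exponent while the pure $\bz$-decomposition of the residual vector is still controlled by $3\deff(\bi)$ exactly as in Lemma \ref{lemmaai} is the heart of the argument and yields the asserted bound $n^{3\deff(\bi)+s}$.
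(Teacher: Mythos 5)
Your strategy coincides with the paper's own two-sentence proof: allocate the $s$ copies of $h$, pair each with the $1/z_j$ that accompanies it, charge $n^{s}$ for these pairings, and then rerun the argument of Lemma~\ref{lemmaai} on the residual $h$-free part. The explicit machinery you introduce (the factors $\sigma_{j,m}$, the multi-indices $\bs_m^+(j)$ and $t_m(j)$, the lift $\tilde{\bi}$ with $\deff(\tilde\bi)=\deff(\bi)+\sum_j(n+1-j)s(j)$) is a faithful unpacking of what the paper asserts in words, and you correctly flag that the naive route through $\tilde\bi$ loses a factor up to $n^{3ns}$, which the paper glosses over.

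However, the step you yourself describe as ``the heart of the argument'' is left unproved, and the intermediate estimate proposed for it is not correct. The inequality $\binom{k_m(j)}{t_m(j)} \le n^{t_m(j)}$ fails in general: $k_m(j)=\Sigma\bs_m^+(j)+t_m(j)$ comes from an unbounded geometric series and has no a priori bound in terms of $n$, so already $\binom{k_m(j)}{1}=k_m(j)$ can exceed $n$. More fundamentally, after the $h$'s are removed the residual decomposition is a decomposition of $\tilde\bi$, and the Lemma~\ref{lemmaai} mechanism controls it by $n^{3\deff(\tilde\bi)}$, not by $n^{3\deff(\bi)}$; the assertion that ``the pure $\bz$-decomposition of the residual vector is still controlled by $3\deff(\bi)$'' is stated, not established. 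A small sanity check shows why the bookkeeping is delicate: for $\bi=-e_1$ and $s=1$ one has $\tilde\bi=\mathbf{0}$, so pairing-plus-residual gives at best $n^{1}\cdot n^{0}=n$, yet the coefficient of $h/z_1$ in the $j=1$ factor $(1+h/z_1)^{-(n+2)}$ has absolute value $n+2$, while the claimed bound $n^{3\deff(\bi)+s}=n^{1-3n}$ is much smaller than either; so the estimate in the statement cannot be reached by the outlined pairing count, and indeed requires either an additional hypothesis on $\bi$ or a weaker right-hand side to be literally true.
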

\begin{proof}
The proof is analogous to the proof of Lemma \ref{lemmaai}: we first allocate $s$ factors in the denominator of $\bz^\bi$ and pair all of them with $h$ in the numerator; we can choose these $s$ factors less than $n^s$ different ways. Then repeat the argument in the proof of Lemma \ref{lemmaai}.
\end{proof}  

Applying Lemma \ref{lemmaai} and Lemma \ref{lemmaaih} we get the following upper bound:
\begin{multline}\nonumber
|A_{\bz^{-\bi-\mathbf{1}}h^{l-s}(dh)^{n-l}}|<\sum_{\substack{1\le j_1<\ldots <j_l \le n \\ m_1\le j_1, \ldots ,m_l\le j_l}}\sum_{\substack{\bi_1+\bi_2=-\bi-e_{m_1}-\ldots -e_{m_l}\\ \bi_1,\bi_2 \in \Lambda^+}}n^{-3(\deff(\bi)+(n+1-m_1)+\ldots +(n+1-m_l))+l-s}<\\ \nonumber
\sum_{\substack{1\le m_1<\ldots <m_l \le n\\ \bi_1+\bi_2=-\bi-e_{m_1}-\ldots -e_{m_l}\\ \bi_1,\bi_2 \in \Lambda^+}}(n+1-m_l)(n-m_{l-1})\cdot \ldots \cdot (n-l+2-m_1)n^{-3(\deff(\bi)+(n+1-m_1)+\ldots +(n+1-m_l))+l-s}<\\ \nonumber
<\sum_{\substack{1\le m_1<\ldots <m_l \le n \\ \bi_1+\bi_2=-\bi-e_{m_1}-\ldots -e_{m_l}\\ \bi_1,\bi_2 \in \Lambda^+}} n^{-3\deff(\bi-\bi(l))+l-s}
\end{multline}
where we used the following inequality for $1\le m_1 < \ldots <m_l \le n$:
\[(n+1-m_l)(n-m_{l-1})\ldots (n-l+2-m_1)n^{-(n+1-m_1)-\ldots -(n+1-m_l)}\le n^{-l-(l-1)-\ldots -1}=n^{\deff(\bi(l))}.\]
Applying Lemma \ref{decomposition} again we get 
\begin{multline}\label{azihls}
|A_{\bz^{-\bi-\mathbf{1}}h^{l-s}(dh)^{n-l}}|<\sum_{\substack{1\le m_1<\ldots <m_l \le n \\ \bi+e_{m_1}+\ldots +e_{m_l}\in \Lambda^-}}2^{-\deff(\bi)-(n+1-m_1)-\ldots -(n+1-m_l)}n^{3(\deff(\bi(l)-\bi))+l-s}\le\\
\le \sum_{\substack{1\le m_1<\ldots <m_l \le n \\ \bi+e_{m_1}+\ldots +e_{m_l}\in \Lambda^-}}2^{\deff(\bi(l)-\bi)}n^{3(\deff(\bi(l)-\bi))+l-s}  
\end{multline}
where, again, $(n+1-m_1)+\ldots +n+1-m_l\ge 1+2+\ldots +l=-\deff(\bi(l))$.
Putting \eqref{bzihs2} and \eqref{azihls} together we can estimate the first sum in \eqref{coeffgeneral} as follows:
\begin{multline}\label{nnnn}
\left|\sum_{s=0}^l \sum_{\substack{\Sigma \bi=-s \\ \bi \neq \bi(l)}} B_{\bz^{\bi}h^s}A_{\bz^{-\bi-\mathbf{1}}h^{l-s}(dh)^{n-l}}\right|<
\sum_{s=0}^l \sum_{\substack{1\le m_1<\ldots <m_l \le n \\ \bi+e_{m_1}+\ldots +e_{m_l}\in \Lambda^- \\  \Sigma \bi=-s,\bi \neq \bi(l)}}2^{\deff(\bi(l)-\bi)}n^{-5\deff(\bi(l)-\bi)+l-s}(2|\baa|)^{s-l}B_{\bz^{\bi(l)}h^l}\\
<\sum_{s=0}^l \sum_{\substack{1\le m_1<\ldots <m_l \le n \\ \bi+e_{m_1}+\ldots +e_{m_l}\in \Lambda^- \\ \Sigma \bi=-s,\bi \neq \bi(l)}}n^{-4\deff(\bi(l)-\bi)}n^{(8n-1)(s-l)}B_{\bz^{\bi(l)}h^l}.
\end{multline}
Observe that 
\begin{description}
\item For $\Sigma \bi=-l$ we have 
\begin{multline}\nonumber
\bi+e_{m_1}+\ldots +e_{m_l}\in \Lambda^- \Rightarrow \deff(\bi+e_{m_1}+\ldots +e_{m_l})=\deff(\bi)+(n+1-m_1)+\ldots \\
\ldots+(n+1-m_l)\le 0
\Rightarrow \deff(\bi-\bi(l))\le (m_1+l-n-1)+(m_2+l-n-2)+\ldots +(m_l-n).
\end{multline}
Therefore using the temporary notation $r_i=m_i+l-n-i\le 0$, we get
\begin{multline}\label{nnn}
\sharp \{1\le m_1< \ldots <m_l\le n:\bi+e_{m_1}+\ldots +e_{m_l}\in \Lambda^-\}<\\
<\sharp \{r_1,\ldots ,r_l\le 0:r_1+\ldots +r_l>\deff(\bi-\bi(l)\}<l^{\deff(\bi(l)-\bi)}.
\end{multline}
\item For $\Sigma \bi=-s>-l$, clearly 
\begin{multline}\nonumber
\sharp \{1\le m_1< \ldots <m_l\le n:\bi+e_{m_1}+\ldots +e_{m_l}\in \Lambda^-\}\le\\
<\sharp \{1\le m_1< \ldots <m_l\le n:(\underbrace{\bi-e_n-\ldots-e_{n-l+s+1}}_{\Sigma=-l})+e_{m_1}+\ldots +e_{m_l}\in \Lambda^-\}<\\
l^{\deff(\bi(l)-\bi)+1+\ldots +(l-s)}.
\end{multline}
\end{description}
Substituting these into \eqref{nnnn} we get
\begin{equation}\nonumber
|\sum_{s=0}^l \sum_{\substack{\Sigma \bi=-s\\ \bi \neq \bi(l)}} B_{\bz^{\bi}h^s}A_{\bz^{-\bi-\mathbf{1}}h^{l-s}(dh)^{n-l}}|<\sum_{s=0}^l \sum_{\substack{\Sigma \bi=-s \\ \bi \neq \bi(l)}}l^{\deff(\bi(l)-\bi)+1+\ldots +(l-s)}n^{-4\deff(\bi(l)-\bi)}n^{(8n-1)(s-l)}B_{\bz^{\bi(l)}h^l}<
\end{equation}
\begin{multline}\nonumber
<\sum_{s=0}^l\sum_{m=1}^\infty \sum_{\substack{\Sigma \bi=-s \\\deff(\bi(l)-\bi)=m}}n^{-3m+(7n-1)(s-l)}B_{\bz^{\bi(l)}h^l}<
\sum_{s=0}^l\sum_{m=1}^\infty n^{-2m+(7n-1)(s-l)}B_{\bz^{\bi(l)}h^l}< \\
<\sum_{s=0}^l \frac{1}{8}n^{(7n-1)(s-l)} B_{\bz^{\bi(l)}h^l}<\frac{1}{4}B_{\bz^{\bi(l)}h^l}.
\end{multline}
To summarize our results, since $A_{\bz^{-\bi(l)-\mathbf{1}}(dh)^{n-l}}=1$, we get
\begin{equation}\label{one}
\left|\sum_{s=0}^l \sum_{\Sigma \bi=-s, \bi \neq \bi(l)} B_{\bz^{\bi}h^s}A_{\bz^{-\bi-\mathbf{1}}
h^{l-s}(dh)^{n-l}}\right|<\frac{1}{4}B_{\bz^{\bi(l)}h^l}
A_{\bz^{-\bi(l)-\mathbf{1}}(dh)^{n-l}}.
\end{equation} 
The analogous computation for the second sum in \eqref{coeffgeneral} 
shows that for $\delta=\frac{1}{2n^{8n}}$, $a_i=n^{8(n+1-i)}$ we have
\begin{equation}\label{two}
n^2\d |\baa|\left|\sum_{s=1}^{l+1} \sum_{\Sigma \bi=-s-1} B_{\bz^{\bi}h^s(dh)}A_{\bz^{-\bi-\mathbf{1}}h^{l-s}(dh)^{n-l-1}}\right|<
\frac{1}{4}B_{\bz^{\bi(l)}h^l}A_{\bz^{-\bi(l)-\mathbf{1}}(dh)^{n-l}}.
\end{equation} 

Then \eqref{one} and \eqref{two} gives Lemma \ref{lemmaotherterms}. Combined with  \eqref{domterm} and \eqref{pntob0} gives the desired Proposition \ref{polynomial}:
\[|p_{n-l}|<\frac{3}{2}|B_{\bi(l)h^l}A_{\bz^{-\bi(l)-\mathbf{1}}(dh)^{n-l}}|
<\frac{3}{2}n^{8ln}B_{\mathbf{0}}<
3n^{8ln}|p_{n}|,\]
and Theorem \ref{maintheoremone} is proved. This proves Theorem \ref{main} applying the Morse inequalities. Theorem \ref{germtoentire} and Theorem \ref{main} together give Theorem \ref{mainthmone}.


\begin{thebibliography}{}


\bibitem{arnold}  Arnold, V. I., Goryunov, V. V., Lyashko, O. V.,Vasilliev, V. A.: Singularity theory I. Dynamical systems VI, Encyclopaedia Math. Sci., Springer-Verlag, Berlin (1998)

\bibitem{ab} Atiyah M., Bott, R.: The moment map and equivariant cohomology. Topology 23(1), 1-28 (1984)

\bibitem{b2} B\'erczi, G.: Thom polynomials and the Green-Griffiths-Lang conjecture, arXiv:1011.4710

\bibitem{b} B\'erczi, G.: Moduli of map germs, Thom polynomials and the Green-Griffiths conjecture. Contributions to Algebraic Geometry, edited by P. Pragacz, EMS 141-168 (2012)  

\bibitem{thesis} B\'erczi, G.: Multidegrees of singularities and non-reductive quotients. PhD Thesis, E\"otv\"os University Budapest (2008)

\bibitem{BFR} B\'erczi, G., Feh\'er, L.M., Rim\'anyi, R.: Expressions for resultants coming from the global theory
of singularities. Topics in Algebraic and Noncommutative Geometry, Contemporary Mathematics 324, 63-69 (2003)

\bibitem{bsz} B\'erczi, G., Szenes, A.: Thom polynomials of Morin singularities. Annals of Mathematics 175, 567-629 (2012)

\bibitem{bk} B\'erczi, G., Kirwan, F.: A geometric construction for invariant jet differentials. Surveys in Differential Geometry Vol XVII, 79-126 (2012)

\bibitem{bgv} Berline, H., Getzler, E., Vergne, M.: Heat kernels and
    Dirac operators. Springer-Verlag Berlin (2004)

\bibitem{BV} Berline, N., Vergne, M.: Zeros dun champ de vecteurs et classes characteristiques
equivariantes. Duke Math. J. 50(2), 539-549 (1973)

\bibitem{bloch} Bloch, A.: Sur les syst\'emes de fonctions uniformes satisfaisant \'a l' \'equation d'une vari\'et\'e alg\'ebrique
dont l'irr\'egularit\'e d\'epasse la dimension. J. de Math. 5, 19-66 (1926)

\bibitem{darondeau} Darondeau, L.: On the logarithmic Green-Griffiths conjecture, PhD Thesis, Universit\'e Paris-Sud (2014)

\bibitem{dem15} Demailly, J.-P.: Proof of the Kobayashi conjecture on the hyperbolicity of very general hypersurfaces. arXiv:1501.07625

\bibitem{dem} Demailly, J.-P.: Algebraic criteria for Kobayashi hyperbolic projective varieties and jet differentials.  Proc. Sympos. Pure Math.
62, 285-360 (1982)

\bibitem{dem00} Demailly, J.-P.: Multiplier ideal sheaves and
analytic methods in algebraic geometry. 
ICTP Lecture Notes 5,  Trieste, 1-148 (2001)

\bibitem{dem3}  Demailly, J.-P.: Holomorphic Morse inequalities and the Green-Griffiths-Lang conjecture. Pure and Applied Mathematics Quarterly 7, 1165-1208 (2011)

\bibitem{dmr} Diverio, S., Merker, J., Rousseau, E.: Effective algebraic degeneracy. Invent. Math. 180, 161-223 (2010) 

\bibitem{div2} Diverio, S.: Existence of global invariant jet differentials
on projective hypersurfaces of high degree. Math. Ann. 344(2), 293-315 (2008)


\bibitem{dr} Diverio, S., Rousseau, E.: A survey on hyperbolicity of projective hypersurfaces, {\it IMPA Lecture Notes, Rio de Janerio, 2012.}

\bibitem{dk} Doran, B., Kirwan, F.:Towards non-reductive geometric invariant theory. Pure and Appl. Math. Q. 3 , 61-105 (2007)

\bibitem{eisenbud}  Eisenbud, D.: Commutative algebra, with a view toward algebraic geometry. Graduate Texts
in Mathematics 150, Springer-Verlag  (1995)

\bibitem{FR1} Feh\'er, L. M., Rim\'anyi, R.: Thom polynomial computing strategies. A survey. Adv. Studies in Pure Math. 43, Singularity Theory and Its Applications, Math. Soc. Japan, 45-53 (2006)

\bibitem{rf} Feh\'er, L. M., Rim\'anyi, R.: Thom series of contact singularities. Annals of Mathematics 176, 1381-1426 (2012)

\bibitem{fulton} Fulton, W.: Intersection Theory. Springer-Verlag, New York (1984)

\bibitem{gaffney} Gaffney, T.: The Thom polynomial of
$P^{1111}$. Proc. Symp. Pure Math.
40, 399-408 (1983)

\bibitem{gg} Green, M., Griffiths, P.: Two applications of algebraic
geometry to entire holomorphic mappings. The Chern Symposium
1979. Proc. Intern. Symp. 41-74, Springer, New York (1980)

\bibitem{HK} Haefliger, A., Kosinski, A.: Un th\`eor\`eme de Thom sur les singularit\`es des applications
diff\`erentiables. S\`eminaire Henri Cartan 9 Expos\`e 8 (1956-57)

\bibitem{joseph} Joseph, A.: On the variety of a highest weight
    module. J. of Algebra 88, 238-278 (1984)

\bibitem{kazarian2} Kazarian, M.:  Thom polynomials for Lagrange, Legendre, and critical point function singularities. Proc. LMS 86 707-734 (2003)

\bibitem{kob} Kobayashi, S.: Hyperbolic complex spaces. Grundlehren der Mathematischen Wissenschaften 318,
Springer Verlag, Berlin (1998)

\bibitem{lang} Lang, S.: Hyperbolic and Diophantine analysis. Bull. Amer. Math. Soc. 14, 159-205 (1986)

\bibitem{mcquillan} McQuillan, M.: Diophantine approximation and foliations. IHES Publ. Math. 87, 121-174
(1998)

\bibitem{merker2} Merker, J.: Low pole order frames on vertical jets
of the universal hypersurface. Ann. Inst. Fourier 59., 1077-1104 (2009)

\bibitem{merker3} Merker, J.: Algebraic Differential Equations for Entire Holomorphic Curves in Projective Hypersurfaces of General Type: Optimal Lower Degree Bound. Progress in Mathematics 308, 41-142 (2015)

\bibitem{milsturm} Miller, E., Sturmfels, B.: Combinatorial Commutative Algebra. Springer Verlag, Berlin (2004)

\bibitem{git} Mumford, D., Fogarty, J., Kirwan, F.: Geometric Invariant Theory. Springer Verlag, Berlin (1994)

\bibitem{pragaczweber} Pragacz,P., Weber, A.: Positivity of Schur function expansions of Thom polynomials. Fundamenta Mathematicae 195, 85Ð95, (2007)

\bibitem{rimanyi} Rim\'anyi, R.: Thom polynomials, symmetries and incidences of singularities. Invent. Math.
143(3), 499-521 (2001) 

\bibitem{rossmann} Rossmann, W.: Equivariant multiplicities on complex varieties. Orbites unipotentes et representations, III. Asterisque No. 173-174,  313-330 (1989)

\bibitem{siu1} Siu, Y.-T.: Some recent transcendental techniques in algebraic and complex geometry. Proceedings of
the International Congress of Mathematicians, Vol. I, Higher Ed. Press Beijing, 439-448 (2002)

\bibitem{siu2} Siu, Y.-T.: Hyperbolicity in complex geometry, The legacy of Niels Henrik Abel, Springer, Berlin 543-566 (2004)

\bibitem{siu3} Siu, Y.-T., Yeung, S.-K.: Hyperbolicity of the complement of a generic
smooth curve of high degree in the complex projective plane. Invent.
Math. 124., 573-618 (1996)

\bibitem{siu4}  Siu, Y.-T.: Hyperbolicity of generic high-degree hypersurfaces in complex projective space. Invent. Math. 200., 1-98 (2015)

\bibitem{szenes} Szenes, A.: Iterated residues and multiple Bernoulli polynomials. Int. Math. Res. Not. 18, 937-956 (1998)

\bibitem{trap} Trapani, S.: Numerical criteria for the positivity of the
difference of ample divisors. Math. Z. 219, 387-401 (1995)


\bibitem{thom} Thom, R.: Les singularit\'es des applications diff\'erentiables. Ann. Inst. Fourier 6, 43-87 (1955-56)

\bibitem{voisin} Voisin, C.: On a conjecture of Clemens on rational curves on hypersurfaces. J. Diff. Geom. 44, 200Ð213. (1996),  A correction: On a conjecture of Clemens on rational curves on hypersurfaces. J. Diff. Geom. 49,  601-611 (1998)

\bibitem{voj} Vergne, M.: Polynomes de Joseph et repr\'esentation de Springer. Annales scientifiques de l'\'Ecole Normale Sup\'erieure 23.4, 543-562 (1990)
\end{thebibliography}
\end{document}